\theoremstyle{plain}
\newtheorem{assumption}{Assumption}
\newtheorem{lemma}{Lemma}
\newtheorem{theorem}{Theorem}
\theoremstyle{remark}
\newtheorem{remark}{Remark}
\DeclareMathOperator*{\esssup}{ess\,sup}
\journal{Automatica}
\begin{document}
	
\begin{frontmatter}
		
\title{Backstepping Control of Continua of Linear Hyperbolic PDEs and
  Application to Stabilization of Large-Scale $n+m$ Coupled Hyperbolic PDE
  Systems\tnoteref{t1}} 
\tnotetext[t1]{Funded by the European Union (ERC, C-NORA, 101088147). Views and 
opinions expressed are however those of the authors only and do not necessarily reflect those of 
the European Union or the European Research Council Executive Agency. Neither the European 
Union nor the granting authority can be held responsible for them.}
		
\author[1]{Jukka-Pekka Humaloja}
\author[1]{Nikolaos Bekiaris-Liberis}
		
\affiliation[1]{organization={Department of Electrical and Computer Engineering, Technical 
University of Crete},
	addressline={University Campus, Akrotiri}, 
	city={Chania},
	postcode={73100}, 
	country={Greece}}
		
\begin{abstract}
 We develop a backstepping control design for a class of continuum systems of linear hyperbolic 
 PDEs, described by a coupled system of an ensemble of rightward transporting PDEs and a 
 (finite) system of $m$ leftward transporting PDEs. The key analysis challenge of the design is to 
 establish well-posedness of the resulting ensemble of kernel equations, since they evolve on a 
 prismatic (3-D) domain and inherit the potential discontinuities of the kernels for the case of 
 $n+m$ hyperbolic systems. We resolve this challenge generalizing the well-posedness analysis 
 of Hu, Di Meglio, Vazquez, and Krstic to continua of general, heterodirectional hyperbolic 
 PDE  systems, while also constructing a proper Lyapunov functional.

Since the motivation for addressing such PDE systems continua comes from the objective to 
develop computationally tractable control designs for large-scale PDE systems, we then introduce 
a methodology for stabilization of general $n+m$ hyperbolic systems, constructing stabilizing 
backstepping control kernels based on the continuum kernels derived from the continuum system 
counterpart. This control design procedure is enabled by establishing that, as $n$ grows, the 
continuum backstepping control kernels can approximate (in certain sense) the exact kernels, and 
thus, they remain stabilizing (as formally proven). This approach guarantees that complexity of 
computation of stabilizing kernels does not grow with the number $n$ of PDE systems 
components. We further establish that the solutions to the $n+m$ PDE system converge, as 
$n\to\infty$, to the solutions of the corresponding continuum PDE system.

We also provide a numerical example in which the continuum kernels can be obtained in closed 
form (in contrast to the large-scale kernels), thus resulting in minimum complexity of control 
kernels computation, which illustrates the potential computational benefits of our approach.

\end{abstract}
		
\begin{keyword}
Backstepping control \sep hyperbolic PDEs \sep large-scale systems \sep PDE continua.
		
			
			
\end{keyword}
		
\end{frontmatter}

\section{Introduction}

\subsection{Motivation}

Stabilization of large-scale systems of general, $n+m$ heterodirectional linear hyperbolic PDEs 
can be achieved via backstepping, see, for example, \cite{AnfAamBook, AurBre22, AurDiM16, 
CorHuL17, DiMArg18, HuLDiM16, HuLVaz19, RamZwaIFAC13}. Such large-scale systems of 
hyperbolic PDEs may be 
utilized to describe the dynamics of various systems with practical importance. In particular, they 
can be utilized to describe, traffic flow dynamics in large traffic networks \cite{FriGot22, GotHer21,
TumCan22, ZhaLua22}, as well as in multi-lane \cite{HerKla03, YuHKrs21} or 
multi-class traffic \cite{BurYuH21, MohRam17}; blood flow dynamics in cardiovascular networks 
consisting of 
interconnected arterial segments \cite{BikPhd, ReyMer09}; epidemics spreading dynamics in 
various 
geographical regions and among different age groups \cite{BasCorBook, GuaPri20, IanBook,  
KitBes22}; dynamics of 
multi-phase flows in oil drilling applications \cite{DiMKaaACC11}; and water networks dynamics 
\cite{BasCorBook, DiaDia17}. Complexity 
of computation of stabilizing backstepping kernels may, in general, grow with the number of PDE 
systems components \cite{HumBek24arxivb, HumBekCDC24}, which may, in fact, be 
alleviated constructing backstepping feedback laws based on continua PDE systems counterparts 
 \cite{HumBek24arxivb, HumBekCDC24}. Consequently, motivated by this and the 
practical significance of considering large-scale systems of hyperbolic PDEs, we address the 
problem of design of computationally tractable backstepping feedback laws for large-scale 
systems of $n+m$ heterodirectional linear hyperbolic PDEs, via introduction of a control design 
procedure that relies on development of backstepping control laws for continua PDE systems 
counterparts. 

\subsection{Literature}

The first result on backstepping stabilization of a class of continua of hyperbolic PDE systems was 
developed in \cite{AllKrs25}, while a formal connection between the class of systems considered 
in \cite{AllKrs25} and the class of $n+1$ linear hyperbolic systems \cite{DiMVaz13} (for large 
$n$), 
as well as the 
application of the control design originally developed for the continuum system to the large-scale 
counterpart, were made in \cite{HumBek24arxiv, HumBekCDC24}. Therefore, besides 
\cite{AllKrs25} and \cite{HumBek24arxiv, HumBekCDC24}, the present paper is related to the 
results on backstepping stabilization of $n+m$ linear hyperbolic systems, see, for example, 
\cite{AnfAamBook, AurDiM16, CorHuL17, DiMArg18, HuLDiM16, HuLVaz19}, as well as to results in 
which PDE ensembles may arise as result of employment of Fourier transform, see, for example, 
\cite{VazKrsBook} (that deals with parabolic PDEs). In addition, as the actual motivation for our 
developments is to address computational complexity of backstepping designs for large-scale 
hyperbolic systems, papers related to computation of backstepping kernels are also relevant, in 
particular, \cite{BhaShi24} that introduces a neural operators-based computation method, 
\cite{AurMor19} that 
presents a late-lumping-based approach, and \cite{VazCheCDC23} that relies on power series 
representations 
of the kernels (even though these results do not explicitly aim at addressing computational 
complexity with respect to increasing number of systems components). Here we address the 
previously unattempted problems of backstepping control design for the continuum counterpart 
of a large-scale system of $n+m$ hyperbolic PDEs and its application for the stabilization of the 
original large-scale system.

\subsection{Contributions}

 We start considering a continuum PDE system that corresponds to the $n+m$ hyperbolic system 
 as $n\to\infty$\footnote{The problem of stabilization of the system as $m\to\infty$ is a different 
 problem that requires a quite different treatment; see our discussion in Section~\ref{sec:conc} 
 (second paragraph).} 
 for which we employ the continuum PDE backstepping method. This gives rise to a 
 continuum plus $m$ kernel equations that are defined on a prismatic (3-D) domain that arises by 
 continuating the triangular (2-D) domain of definition of the respective $n+m$ kernel equations. 
 We establish well-posedness of the kernel equations treating them on each 3-D subdomain that 
 is spanned along the direction of the ensemble variable from subdomains of the 2-D triangular 
 space on which the kernels do not feature discontinuities. This allows us to then show continuity 
 of the respective characteristic projections and to employ the successive approximations 
 approach on each 3-D subdomain, thus generalizing the well-posedness results from 
 \cite{HuLDiM16, HuLVaz19} and \cite{AllKrs25} for the $n+m$ and $\infty+1$ cases, respectively, 
 to the case of a continuum 
 plus $m$ ($\infty+m$) kernels. Such a generalization is highly nontrivial and requires a delicate 
 technical treatment as it inherits the technical intricacies of both going from $n+1$ to $n+m$ 
 systems, in particular, the fact that the kernels may feature discontinuities, and going from a 
 system with finite components to a continuum, in particular, having to deal with PDEs defined on 
 2-D domains instead of vector-valued 1-D PDEs. We establish exponential stability (in $L^2$) of 
 the closed-loop system, constructing a Lyapunov functional. 

We then consider the large-scale $n+m$ system counterpart for which we design a feedback law 
employing the continuum kernels (evaluated at $n$ points for each control input). We establish 
that the closed-loop system is exponentially stable (in $L^2$) by showing that, for sufficiently 
large $n$, the exact backstepping kernels can be approximated to any desired accuracy by the 
continuum kernels. The proof relies on construction of a sequence of backstepping kernels that is 
defined such that each kernel in the sequence matches the exact kernel (in a piecewise manner 
with respect to the ensemble variable), while then showing that this sequence converges to the 
continuum kernel. This in turn implies that the approximation error of the exact control kernels can 
be made arbitrarily small for sufficiently large $n$. This gives rise to a closed-loop system that is 
affected by a bounded vanishing perturbation with a bound that can be made arbitrarily small, and 
thus, the closed-loop system remains exponentially stable, which we show constructing a 
Lyapunov functional. We further provide a formal convergence result establishing the exact 
convergence properties of the actual solutions of the $n+m$ PDE system to the solutions of the 
continuum counterpart. This is achieved constructing a sequence of solutions that matches the 
solutions to the $n+m$ system in a piecewise manner with respect to the ensemble variable and 
then showing that this sequence converges to the solutions of the continuum system. We provide 
a numerical example of an $n+m$ system for which the exact control kernels do not exhibit a 
closed-form solution, but the continuum kernels of the respective $\infty+m$ system do, 
illustrating the computational complexity benefits of employing the continuum kernels for 
stabilization of the large-scale system.

\subsection{Organization}

The paper is organized as follows. In Section~\ref{sec:infmstab}, we present a backstepping 
control law for $\infty+m$ hyperbolic systems and show exponential stability of the closed-loop 
system. In 
Section~\ref{sec:wpk}, we show that the backstepping kernels employed in the control law are 
well-posed. In Section~\ref{sec:nmstab}, we show that the $\infty+m$ kernels can be 
used in constructing exponentially stabilizing control laws for large-scale $n+m$ hyperbolic 
systems. In Section~\ref{sec:nmapp}, we formally show the convergence of the solution of the 
$n+m$ PDE system to the solution of the
the $\infty+m$ PDE system. In Section~\ref{sec:ex}, we validate 
the theoretical developments on a numerical  example, illustrating in simulation that stabilization  
of the $n+m$ system can be achieved employing the continuum kernels-based controller. In 
Section~\ref{sec:conc}, we provide concluding remarks and discuss open problems.

\subsection{Notation}

We use the standard notation $L^2(\Omega; \mathbb{R})$ for real-valued
Lebesgue integrable functions on a domain $\Omega \subset \mathbb{R}^d$ for some $d \geq 1$. 
For conciseness, we occasionally use shorthand $L^2$ for $L^2([0,1];\mathbb{R})$. The notations
$L^\infty(\Omega;\mathbb{R}),C(\Omega; \mathbb{R})$, and $C^1(\Omega; \mathbb{R})$ denote 
essentially bounded, continuous, and
continuously differentiable functions, respectively, on $\Omega$. Moreover, the notation
$f \in L_{\rm{loc}}^2([0,+\infty);\mathbb{R})$ means that $f \in L^2([0,
a];\mathbb{R})$ for any $a\in \mathbb{N}$. We 
denote vectors and matrices by bold symbols, and $\|\cdot\|_\infty,\|\cdot\|_1$ denote the 
maximum 
absolute row and column sums, respectively, of a matrix (or a vector). For any $n,m \in 
\mathbb{N}$, we denote by $E$ the 
space $L^2([0,1]; \mathbb{R}^{n+m})$ equipped with the inner product 
\begin{align}
\label{eq:eip}
\left\langle \left( 
\begin{smallmatrix}
\mathbf{u}_1 \\ \mathbf{v}_1
\end{smallmatrix}
\right), \left( 
\begin{smallmatrix}
\mathbf{u}_2\\ \mathbf{v}_2
\end{smallmatrix}
\right) \right\rangle_E
& = \nonumber \\ \int\limits_0^1 \left(\frac{1}{n}
\sum_{i=1}^nu_1^i(x)u_2^i(x) +
  \sum_{j=1}^mv_1^j(x)v_2^j(x)\right)dx,
&
\end{align}
which induces the norm $\left\|
\cdot
\right\|_E = \sqrt{\left<\cdot,\cdot\right>_E}$. We also define the continuum version of $E$ as 
$n\to\infty$ by
$E_c = L^2([0,1];L^2([0,1];\mathbb{R}))\times L^2([0,1];\mathbb{R}^m)$, (i.e., $\mathbb{R}^n$ 
becomes $L^2([0,1];\mathbb{R})$ as $n\to\infty$) equipped with the inner 
product
\begin{align}
\label{eq:ecip}
\left\langle \left( 
\begin{smallmatrix}
u_1 \\ \mathbf{v}_1
\end{smallmatrix}
\right), \left( 
\begin{smallmatrix}
u_2\\ \mathbf{v}_2
\end{smallmatrix}
\right) \right\rangle_{E_c}
& = \nonumber \\
 \int\limits_0^1\left(\int\limits_0^1 u_1(x,y)u_2(x,y)dy + \sum_{j=1}^mv_1^j(x)v_2^j(x)\right)dx, &
\end{align}
which coincides with $L^2([0,1]^2;\mathbb{R})\times L^2([0,1];\mathbb{R}^m)$.
Moreover, we say that a 
system is exponentially stable on $E$ (resp. on $E_c$) if, for any initial condition $z_0\in E$ (resp. 
$z_0\in E_c$), the (weak) solution 
$z \in C([0,\infty); E)$ (resp. $z \in C([0,\infty); E_c)$) of the system satisfies $\|z(t)\|_E \leq 
Me^{-ct}\|z_0\|_E$ (resp. $\|z(t)\|_{E_c} \leq 
Me^{-ct}\|z_0\|_{E_c}$) for some constants $M,c > 0$ that are independent of $z_0$. Finally, we 
denote by 
$\mathcal{T}$ and $\mathcal{P}$ the triangular and prismatic, respectively, sets
\begin{subequations}
\begin{align}
\mathcal{T} & = \left\{ (x,\xi) \in [0,1]^2: 0 \leq \xi \leq x \leq 1 \right\}, \\
\mathcal{P} & = \left\{(x,\xi,y) \in [0,1]^3: (x,\xi) \in \mathcal{T} \right\}.
\end{align}
\end{subequations}

\section{Stabilization of Continua $\infty + m$ Systems} \label{sec:infmstab}

\subsection{Continua $\infty+m$ Systems of Hyperbolic PDEs}

The considered class of systems can be thought of as the continuum counterpart of $n+m$ 
hyperbolic systems in the limit case $n\to\infty$.\footnote{This aspect is 
considered formally in Section~\ref{sec:nmapp}.} However, instead of considering a countably 
infinite number as $n\to\infty$, we replace the $n$-part by an (uncountably infinite) ensemble 
over the variable $y \in 
[0,1]$. Thus, the considered class of continuum systems is of the form
\begin{subequations}
\label{eq:infm}%
\begin{align}
  u_t(t,x,y) + \lambda(x,y)u_x(t,x,y)   & = \nonumber \\
 \int\limits_0^1\sigma(x,y,\eta)u(t,x,\eta)d\eta +
   \mathbf{W}(x,y)\mathbf{v}(t,x), & \label{eq:infm1} \\
     \mathbf{v}_t(t,x) - \mathbf{M}(x)\mathbf{v}_x(t,x)
   & =  \nonumber \\
   \int\limits_0^1\pmb{\Theta}(x,y)u(t,x,y)dy + \pmb{\Psi}(x)\mathbf{v}(t,x),
   & \label{eq:infm2}
\end{align}
\end{subequations}
with boundary conditions 
\begin{subequations}
\label{eq:infmbc}%
\begin{align}
u(t,0,y) & = \mathbf{Q}(y)\mathbf{v}(t,0), \\
\mathbf{v}(t,1) & = \mathbf{U}(t),
\end{align}
\end{subequations}
for almost every $y \in [0,1]$. Here we employ the matrix
notation for $\mathbf{v}, \mathbf{U}, \mathbf{M}, \pmb{\Theta},
\pmb{\Psi}, \mathbf{W}$, and $\mathbf{Q}$ for the sake of 
conciseness, that is, $\mathbf{v} = \left( v^j \right)_{j=1}^m$,
$\mathbf{U} = \left( U^j \right)_{j=1}^m$, and the parameters are as follows.
\begin{assumption}
\label{ass:infm}
The parameters of \eqref{eq:infm}, \eqref{eq:infmbc} are such that
\begin{subequations}
\label{eq:infmparam}
\begin{align}
\mathbf{M} & = \operatorname{diag}(\mu_j)_{j=1}^m
\in C^1([0,1]; \mathbb{R}^{m\times m}), \\
\pmb{\Theta} & = \left( \theta_j \right)_{j=1}^m \in
C([0,1];L^2([0,1];\mathbb{R}^m)), \\
\pmb{\Psi} & = \left( \psi_{i,j} \right)_{i,j=1}^m \in C([0,1];
\mathbb{R}^{m\times m}), \\
\mathbf{W} & = 
\begin{bmatrix}
	W_1 & \cdots  & W_m
\end{bmatrix} \in C([0,1];L^2([0,1];\mathbb{R}^{1\times m})), \\
\mathbf{Q} & = 
\begin{bmatrix}
	Q_1 & \cdots & Q_{m}
\end{bmatrix} \in L^2([0,1]; \mathbb{R}^{1\times m}),
\end{align}
\end{subequations}
with $\lambda  \in C^1([0,1]^2;\mathbb{R})$ and  $\sigma \in
C([0, 1]; L^2([0,1]^2;\mathbb{R}))$. Moreover, $\mu_m(x) > 0$ and $\lambda(x,y) > 0$ uniformly 
for all $x,y \in [0,1]$, and additionally
\begin{equation}
	\label{eq:muass}
	\min_{x\in [0,1]}\mu_j(x) > \max_{x\in[0,1]}\mu_{j+1}(x),
\end{equation}
for all $j = 1,\ldots, m-1$. Finally, $\psi_{j,j} = 0$ for all $j = 
1,\ldots,m$.\footnote{This comes without loss of generality, as such terms
	can be removed using a change of variables (see also, e.g., \cite{HuLDiM16, HuLVaz19}).}
\end{assumption}
\begin{remark}
\label{rem:infmwp}
Under Assumption~\ref{ass:infm}, it can be shown by using the same arguments as in \cite[Prop. 
B.1]{HumBek24arxiv} that the system \eqref{eq:infm}, \eqref{eq:infmbc} is well-posed on $E_c$. 
That is, for any initial conditions $u_0 \in L^2([0,1];L^2([0,1];\mathbb{R})), \mathbf{v}_0 \in 
L^2([0,1];\mathbb{R}^m)$ and input $\mathbf{U} \in L_{\rm loc}^2([0, +\infty); \mathbb{R}^m)$, 
there is a unique (weak) solution to \eqref{eq:infm}, \eqref{eq:infmbc} satisfying $(u, \mathbf{v}) 
\in C([0,+\infty); E_c)$.
\end{remark}

\subsection{Continuum Backstepping Kernel Equations}

The target system
for the continuum Volterra backstepping transformation is chosen~as
\begin{subequations}
\label{eq:infmts}%
\begin{align}
  \alpha_t(t,x,y) + \lambda(x,y)\alpha_x(t,x,y)
  & = \nonumber \\
   \int\limits_0^1\sigma(x,y,\eta)\alpha(t,x,\eta)d\eta +
  \mathbf{W}(x,y)\pmb{\beta}(t,x) & \nonumber \\
   + \int\limits_0^1\int\limits_0^x C^+(x,\xi,y,\eta)\alpha(t,\xi,\eta)d\xi d\eta & \nonumber \\
   +  \int\limits_0^x \mathbf{C}^-(x,\xi,y)\pmb{\beta}(t,\xi)d\xi, \label{eq:infmts1} \\
    \pmb{\beta}_t(t,x) - \mathbf{M}(x)\pmb{\beta}_x(t,x)
  & =  \mathbf{G}(x)\pmb{\beta}(t,0), \label{eq:infmts2}
\end{align}
\end{subequations}
with boundary conditions
\begin{subequations}
\label{eq:infmtsbc}%
\begin{align}
\alpha(t,0,y) & = \mathbf{Q}(y)\pmb{\beta}(t,0), \label{eq:infmtsbc1} \\
\pmb{\beta}(t,1) & = \pmb{0},  \label{eq:infmtsbc2}
\end{align}
\end{subequations}
for (almost) all $y\in[0,1]$, where $C^+ \in L^\infty(\mathcal{T}; 
L^2([0,1]^2;\mathbb{R}))$, $\mathbf{C}^- \in L^\infty(\mathcal{T}; L^2([0,1]; \mathbb{R}^{1\times 
m}))$, and 
$\mathbf{G} \in L^\infty([0,1]; \mathbb{R}^{m\times m})$ is of the form
\begin{equation}
	\label{eq:GG}
	\mathbf{G}(x) = 
	\begin{bmatrix}
		0 & \cdots & \cdots & 0 \\
		G_{2,1}(x) & \ddots  & \ddots & \vdots \\
		\vdots & \ddots & \ddots & \vdots  \\
		G_{m,1}(x)  & \cdots  & G_{m,m-1}(x) & 0
	\end{bmatrix}.
\end{equation}
In order to map \eqref{eq:infm},
\eqref{eq:infmbc} into \eqref{eq:infmts},  
\eqref{eq:infmtsbc}, we employ the following continuum Volterra transformation 
\begin{subequations}
  \label{eq:infmV}%
\begin{align}
	\alpha(t,x,y) & = u(t,x,y)  \label{eq:infmV1} \\
  \pmb{\beta}(t,x) & = \mathbf{v}(t,x) -
  \int\limits_0^x\mathbf{L}(x,\xi)\mathbf{v}(t,\xi)d\xi \nonumber
  \\
  & \qquad - \int\limits_0^x \int\limits_0^1
    \mathbf{K}(x,\xi,y)u(t,\xi,y)dyd\xi, \label{eq:infmV2}
\end{align}
\end{subequations}
where $\mathbf{L}\in L^\infty(\mathcal{T}; \mathbb{R}^{m\times m})$ and $\mathbf{K} 
\in L^\infty(\mathcal{T}; L^2([0,1]; \mathbb{R}^m))$ are the backstepping 
kernels.\footnote{We, in fact, show in Section~\ref{sec:wpk} that $\mathbf{K}$ and $\mathbf{L}$ 
are piecewise continuous in $(x,\xi) \in \mathcal{T}$, so that evaluation along the boundaries of 
$\mathcal{T}$ is well-defined.}

Derivation of the continuum kernels equations is provided in \ref{app:infmk}. We obtain that
$\mathbf{L}$ and $\mathbf{K}$ need to satisfy the following kernel equations
\begin{subequations}
\label{eq:infmk}%
\begin{align}
	  \mathbf{M}(x)\mathbf{K}_x(x,\xi,y) -
	\mathbf{K}_{\xi}(x,\xi,y)\lambda(\xi,y) -
	\mathbf{K}(x,\xi,y)\lambda_{\xi}(\xi,y) & = \nonumber \\
	\mathbf{L}(x,\xi)\pmb{\Theta}(\xi,y) +
	\int\limits_0^1\mathbf{K}(x,\xi,\eta)\sigma(\xi,\eta,y)d\eta,
	& \label{eq:infmkb} \\
  \mathbf{M}(x)\mathbf{L}_x(x,\xi) +
  \mathbf{L}_{\xi}(x,\xi)\mathbf{M}(\xi) +
  \mathbf{L}(x,\xi)\mathbf{M}'(\xi)
  & = \nonumber \\
  \mathbf{L}(x,\xi)\pmb{\Psi}(\xi) + \int\limits_0^1
  \mathbf{K}(x,\xi,y)\mathbf{W}(\xi,y)dy, & \label{eq:infmka}
\end{align}
\end{subequations}
with boundary conditions
\begin{subequations}
\label{eq:infmkbc}%
\begin{align}
  \mathbf{M}(x)\mathbf{L}(x,x) - \mathbf{L}(x,x)\mathbf{M}(x) +
  \pmb{\Psi}(x)
  & = 0,\label{eq:infmkbca} \\
 \mathbf{K}(x,x,y)\lambda(x,y) + \mathbf{M}(x)\mathbf{K}(x,x,y)
+ \pmb{\Theta}(x,y) & = 0,
\label{eq:infmkbcb} \\
\mathbf{L}(x,0)\mathbf{M}(0) -
\int\limits_0^1\mathbf{K}(x,0,y)\lambda(0,y)\mathbf{Q}(y)dy & = \mathbf{G}(x),
\label{eq:infmkbce}
&
\end{align}
\end{subequations}
for almost all $0 \leq \xi\leq x \leq 1$ and $y,\eta \in
[0,1]$. More precisely, \eqref{eq:infmkbce} splits into two parts
\begin{subequations}
\label{eq:infmkbcetmp}
\begin{align}
	 \forall i\leq j: & & L_{i,j}(x,0) & =  \frac{1}{\mu_j(0)} \int\limits_0^1
	K_i(x,0,y)\lambda(0,y)Q_j(y)dy, \label{eq:infmkbcetmp1} \\
	\forall i > j: & & G_{i,j}(x) & = \frac{1}{\mu_j(0)} \int\limits_0^1
	K_i(x,0,y)\lambda(0,y)Q_j(y)dy, \label{eq:infmkbcetmp2}
\end{align}
\end{subequations}
where \eqref{eq:infmkbcetmp1} acts as a boundary condition for \eqref{eq:infmk} and 
\eqref{eq:infmkbcetmp2} defines the nonzero elements of $\mathbf{G}$ shown in \eqref{eq:GG}. 
Similarly to 
\cite{HuLDiM16, HuLVaz19}, we also impose additional, artificial boundary conditions, to ensure
the well-posedness of the kernel equations, as follows
\begin{equation}
\label{eq:infmkabc}%
\forall j < i: \qquad L_{i,j}(1,\xi) = l_{i,j}^{(1)}(\xi),
\end{equation}
where the functions $l^{(1)}_{i,j}$ are chosen such
that a $C^0$ compatibility condition\footnote{While $C^2$ compatibility
	conditions are sought in \cite{HuLVaz19} for obtaining (piecewise) $C^2$ kernels,
	for our purposes $C^0$ compatibility conditions are enough.} is satisfied on
$(x,\xi)=(1,1)$.\footnote{For the 
$\mathbf{L}$ kernels, a compatibility condition cannot (generally) be
satisfied on $(x,\xi) = (0,0)$ due to \eqref{eq:infmkbca} and
\eqref{eq:infmkbce}, \eqref{eq:infmkbcb}.} Thus,
consistently with \eqref{eq:infmkbca}, we impose
\begin{equation}
\label{eq:infmabc0}%
  l^{(1)}_{i,j}(1) =-\frac{\psi_{i,j}(1)}{\mu_i(1) - \mu_j(1)},
\end{equation}
for all $j < i$. The well-posedness
of the kernel equations  
\eqref{eq:infmk}--\eqref{eq:infmabc0} is considered in Section~\ref{sec:wpk}.

\subsection{Backstepping Feedback Law and Stability Result}

The backstepping control law for $j=1,\ldots,m$ is given~by
\begin{align}
	\label{eq:infmU}
	U^j(t) & =\int\limits_0^1\int\limits_0^1
	K_j(1,\xi,y)u(t,\xi,y)dyd\xi \nonumber \\
	& \qquad +\int\limits_0^1 \sum_{i=1}^mL_{j,i}(1,\xi)v^i(t,\xi) d\xi,
\end{align}
which stabilizes \eqref{eq:infm}, \eqref{eq:infmbc} by Theorem~\ref{thm:stab}.
\begin{theorem}
	\label{thm:stab}
Under Assumption~\ref{ass:infm}, the control law \eqref{eq:infmU}
exponentially stabilizes the system \eqref{eq:infm}, \eqref{eq:infmbc} on $E_c$.
\begin{proof}
{\em Well-Posedness:} We first establish that the target system \eqref{eq:infmts}, 
\eqref{eq:infmtsbc}  has a well-posed solution on $E_c$, which we achieve by utilizing feedback 
results for the well-posed system \eqref{eq:infm}, \eqref{eq:infmbc} (see 
Remark~\ref{rem:infmwp}). By \cite[Sect. 
10.1]{TucWeiBook}, we can express the well-posed,
boundary-controlled PDE \eqref{eq:infm}, \eqref{eq:infmbc} as a
well-posed abstract Cauchy problem $\dot{z}(t) = Az(t)+BU(t)$ on the
Hilbert space $E_c$, where $z = (u, \mathbf{v})$, the system
$\dot{z}(t)=Az(t)$ corresponds to \eqref{eq:infm} with the homogeneous
boundary condition from \eqref{eq:infmbc} through the domain of $A$,
and $BU(t)$ corresponds to the boundary control in
\eqref{eq:infmbc}. Now, expressing the backstepping control law
\eqref{eq:infmU} as $U(t) = Fz(t)$, the closed-loop dynamics of \eqref{eq:infm}, \eqref{eq:infmbc} 
under the control law \eqref{eq:infmU} are given by
$\dot{z}(t) = (A + BF)z(t)$. As $F$ is a bounded linear operator from
$E_c$ to $\mathbb{R}^m$, the operator $A+BF$ is the generator of a
strongly continuous semigroup by \cite[Cor. 5.5.1]{TucWeiBook}, and
hence, the dynamics $\dot{z}(t) = (A+BF)z(t)$ have a well-posed solution on $E_c$. Now, by 
applying the linear, bounded state transformation \eqref{eq:infmV} (see 
Theorem~\ref{thm:infmkwp} in Section~\ref{sec:wpk}), we have that the target 
system \eqref{eq:infmts}, \eqref{eq:infmtsbc} has a well-posed solution on $E_c$ as well. 

{\em Lyapunov Stability:} Now we show that the (weak; see \cite[Rem. 4.6]{HumBek24arxiv} for 
details on the fact that existence and uniqueness of a weak solution suffices for making our 
Lya- punov-based arguments legitimate)
solution to \eqref{eq:infmts}, 
\eqref{eq:infmtsbc} decays exponentially to zero, which by the invertibility of the transform 
\eqref{eq:infmV} (see Lemma~\ref{lem:invk} in \ref{app:invk}) implies that the system 
\eqref{eq:infm}, \eqref{eq:infmbc} under the control law \eqref{eq:infmU} is exponentially stable. 
Inspired by \cite[Prop. 2.1]{HuLVaz19}, the candidate Lyapunov functional with parameters
$\delta, \mathbf{D} = \operatorname{diag}(D_1,\ldots,D_m) > 0$ is taken as 
\begin{align}
  \label{eq:lyap}
  V(t) & = \int\limits_0^1 \int\limits_0^1 e^{-\delta
         x}\frac{\alpha^2(t,x,y)}{\lambda(x,y)}dydx \nonumber \\
& \quad  + \int\limits_0^1
  e^{\delta x}\pmb{\beta}^T(t,x)\mathbf{D}\mathbf{M}^{-1}(x)\pmb{\beta}(t,x)dx.
\end{align}
Computing $\dot{V}(t)$ and integrating by parts in $x$ gives 
\begin{align}
  \label{eq:lyapd}
  \dot{V}(t)
  & = \left[-e^{-\delta x}\|\alpha(t,x,\cdot)\|^2_{L^2} +
    e^{\delta x}\|\pmb{\beta}(t,x)\|^2_{\mathbf{D}}
    \right]_0^1 \nonumber \\
  & \quad - \delta\int\limits_0^1 \left(e^{-\delta
    x}\|a(t,x,\cdot)\|_{L^2}^2 + e^{\delta
    x}\|\pmb{\beta}(t,x)\|_{\mathbf{D}}^2 \right)dx \nonumber \\
  & \quad \resizebox{.86\columnwidth}{!}{$ \displaystyle
  	+ 2\int\limits_0^1 \int\limits_0^1 \int\limits_0^1e^{-\delta
    x}\frac{\alpha(t,x,y)}{\lambda(x,y)}\sigma(x,\eta,y)\alpha(t,x,\eta)
    d\eta dy dx$} \nonumber \\
  & \quad + 2\int\limits_0^1 \int\limits_0^1 e^{-\delta
    x}\frac{\alpha(t,x,y)}{\lambda(x,y)}
    \mathbf{W}(x,y)\pmb{\beta}(t,x)dydx \nonumber \\
  & \quad \resizebox{.7\columnwidth}{!}{$ \displaystyle
    + 2\int\limits_0^1 \int\limits_0^1 \int\limits_0^1
    \int\limits_0^x e^{-\delta x}
    \frac{\alpha(t,x,y)}{\lambda(x,y)}C^+(x,\xi,y,\eta)\alpha(t,\xi,\eta)$}
    d\xi d\eta dy dx \nonumber \\
  & \quad + 2\int\limits_0^1 \int\limits_0^1 \int\limits_0^x
  e^{-\delta x}\frac{\alpha(t,x,y)}{\lambda(x,y)}
  \mathbf{C}^-(x,\xi,y)\pmb{\beta}(t,\xi)d\xi dy dx \nonumber \\
  & \quad 
    + \int\limits_0^1  e^{\delta x}\pmb{\beta}^T(t,x)\left(
    \mathbf{D}\mathbf{M}^{-1}(x)\mathbf{G}(x) \right. \nonumber \\
 &  \left. \qquad \qquad +\mathbf{G}^T(x)\mathbf{M}^{-1}(x)\mathbf{D}
    \right)\pmb{\beta}(t,0)dx ,
\end{align}
where $\|\cdot\|_{\mathbf{D}}^2 = \left\langle \cdot, \mathcal{D}\cdot
\right\rangle_{R^m}$ denotes the $\mathbf{D}$-weighted inner product. Using the
following bounds (that exist by Assumption~\ref{ass:infm} using Theorem~\ref{thm:infmkwp} and 
Lemma~\ref{lem:C-} in \ref{app:infmk})
\begin{subequations}
\label{eq:lyapb}%
\begin{align}
  m_{\lambda} & = \min_{x,y \in [0,1]} \lambda(x,y), \\
  m_{\mu}  &= \min_{j \in \left\{ 1,\ldots,m \right\}}\min_{x\in
            [0,1]} \mu_j(x), \\
M_{\sigma} & = \max_{x\in [0,1]} \left\|
              \int\limits_0^1\sigma(x,\cdot,\eta)d\eta
              \right\|_{L^2}, \label{eq:Ms} \\
M_W & = \max_{j= \left\{ 1,\ldots,m
      \right\}}\max_{x\in[0,1]}\|W_j(x,\cdot)\|_{L^2}, \label{eq:MW} \\
M_{C^+} & = \esssup_{(x,\xi) \in \mathcal{T}} \left\|
           \int\limits_0^1C^+(x,\xi,\cdot,\eta)d\eta \right\|_{L^2},
            \\
  M_{C^-} & = \max_{j\in \left\{ 1,\ldots,m \right\}}\esssup_{(x,\xi)
            \in \mathcal{T}} \|C_j^-(x,\xi,\cdot)\|_{L^2}, \\
  M_G & = \max_{i,j\in \left\{ 1,\ldots,m \right\}}\esssup_{x\in
        [0,1]}\left| G_{ij}(x) \right|, \\
  M_Q & = \max_{j=1,\ldots,m}\|Q_j\|_{L^2},
\end{align}
\end{subequations}
the boundary conditions \eqref{eq:infmtsbc}, the Cauchy-Schwartz
inequality, and $2\left\langle f,g \right\rangle_{L^2} \leq
\|f\|_{L^2}^2 +\|g\|^2_{L^2}$ for any $f,g \in L^2$, we can estimate \eqref{eq:lyapd}
as 
\begin{align}
  \label{eq:lyapdest}
  \dot{V}(t)
  & \leq - \pmb{\beta}^T(t,0) \left( \mathbf{D} -
   M_Q^2I_{m\times m}
    \right)\pmb{\beta}(t,0)
    \nonumber \\
  & \quad - \delta\int\limits_0^1 \left(e^{-\delta
    x}\|a(t,x,\cdot)\|_{L^2}^2 + e^{\delta
    x}\|\pmb{\beta}(t,x)\|_{\mathbf{D}}^2 \right)dx \nonumber \\
  & \quad + 2\int\limits_0^1e^{-\delta x}
    \frac{M_{\sigma}+M_{C^+}}{m_{\lambda}}\|\alpha(t,x,\cdot)\|_{L^2}^2dx
    \nonumber \\
  & \quad + \int\limits_0^1e^{-\delta x} \left(
    \frac{\|\alpha(t,x,\cdot)\|_{L^2}^2}{m_{\lambda}^2} +
    M_W^2\| \pmb{\beta}(t,x)\|_{\mathbb{R}^m}^2 \right)dx \nonumber \\
  & \quad + \int\limits_0^1e^{-\delta x} \left(
    \frac{\|\alpha(t,x,\cdot)\|_{L^2}^2}{m_{\lambda}^2} + M_{C^-}^2\|
    \pmb{\beta}(t,x)\|_{\mathbb{R}^m}^2  \right)dx \nonumber \\
  & \quad + mM_G\int\limits_0^1e^{\delta x}
    \pmb{\beta}^T(t,x)\mathbf{D}\mathbf{M}^{-1}(x)\pmb{\beta}(t,x)dx
    \nonumber \\
&   \quad + \frac{mM_Ge^{\delta}}{\delta m_{\mu}}
                   \pmb{\beta}^T(t,0) \mathbf{F} \pmb{\beta}(t,0),
\end{align}
where $\mathbf{F} = \operatorname{diag}\left( F_1, \ldots, F_m
\right)$ with 
\begin{equation}
  \label{eq:Fmat}
  F_j = 
\begin{cases}
  \sum_{i=j+1}^m D_i, &  1 \leq j \leq m-1, \\
  0, & j = m,
\end{cases}
\end{equation}
where we employ the lower-triangular structure of $\mathbf{G}$ given in \eqref{eq:GG}  on the
last two lines of \eqref{eq:lyapdest}. Now, $\dot{V}(t)$ can be guaranteed to be
negative definite by choosing $\delta$ and $\mathbf{D}$ such that 
\begin{subequations}
\label{eq:lparam}%
\begin{align}
\delta & > \max \resizebox{.8\columnwidth}{!}{$\displaystyle \left\{
         \frac{2m_{\lambda}(M_{\sigma} + M_{C^+}) + 
         2}{m_{\lambda}^2}, \frac{M_W^2 + M_{C^-}^2 + mM_G}{m_{\mu}}\right\}$}, \\
D_j & > \left(1 + (m-j)\frac{mM_Ge^{\delta}}{\delta m_{\mu}}
      \right)\max \left\{ M_Q^2, 1 \right\},
\end{align}
\end{subequations}
for all $j \in \{1,\ldots,m\}$. More specifically, by defining 
\begin{equation}
	c_V = \delta  - \max \resizebox{.75\columnwidth}{!}{$\displaystyle \left\{
		\frac{2m_{\lambda}(M_{\sigma} + M_{C^+}) + 
			2}{m_{\lambda}^2}, \frac{M_W^2 + M_{C^-}^2 + mM_G}{m_{\mu}}\right\}$}, 
\end{equation}
we have 
\begin{equation}
	\dot{V}(t) \leq - \frac{c_V}{\max\left\{M_\mu, M_\lambda\right\}}V(t),
\end{equation}
where
\begin{equation}
	\label{eq:Mlammu}
		M_{\lambda} = \max_{x,y \in [0,1]} \lambda(x,y), \quad
		M_{\mu} = \max_{j=\left\{ 1,\ldots,m
			\right\}} \max_{x \in [0,1]} \mu_j(x),
\end{equation}
which shows that the target system \eqref{eq:infmts},
\eqref{eq:infmtsbc} is exponentially stable. Thus, due to the invertibility of the transform 
\eqref{eq:infmV} established in Lemma~\ref{lem:invk} in \ref{app:invk}, the control law
\eqref{eq:infmU} exponentially stabilizes \eqref{eq:infm}, \eqref{eq:infmbc}.
\end{proof}
\end{theorem}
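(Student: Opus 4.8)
The plan is to follow the standard backstepping scheme: use the Volterra transformation \eqref{eq:infmV} to map the plant \eqref{eq:infm}, \eqref{eq:infmbc} into the target system \eqref{eq:infmts}, \eqref{eq:infmtsbc}, prove exponential decay of the target system via the Lyapunov functional \eqref{eq:lyap}, and transfer this back through the (bounded, boundedly invertible) transformation.

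\emph{Well-posedness of the closed loop and of the target system.} Since the plant is well-posed on $E_c$ (Remark~\ref{rem:infmwp}), I would realize it as an abstract boundary-controlled Cauchy problem $\dot z(t)=Az(t)+BU(t)$ on $E_c$ with $z=(u,\mathbf v)$, in the spirit of \cite[Sect.~10.1]{TucWeiBook}, so that $B$ is an admissible control operator. The feedback \eqref{eq:infmU} is a bounded linear map $F:E_c\to\mathbb R^m$ — the integrals make sense because the kernels are $L^\infty$ in $(x,\xi)$ with $L^2$-in-$y$ values — hence $A+BF$ generates a $C_0$-semigroup (e.g.\ \cite[Cor.~5.5.1]{TucWeiBook}) and the closed loop $\dot z=(A+BF)z$ has a unique (weak) solution in $C([0,\infty);E_c)$. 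Applying the bounded transformation \eqref{eq:infmV}, whose well-definedness rests on the kernel regularity to be established in Section~\ref{sec:wpk} (Theorem~\ref{thm:infmkwp}) and on the induced coefficients $C^+,\mathbf C^-$ being $L^\infty$-in-$(x,\xi)$ with $L^2$-in-$y$ values (Lemma~\ref{lem:C-}), then yields well-posedness of the target system on $E_c$.

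\emph{Lyapunov stability of the target system.} I would take \eqref{eq:lyap} with free parameters $\delta>0$ and $\mathbf D=\operatorname{diag}(D_1,\dots,D_m)>0$. Differentiating along (weak) solutions and integrating by parts in $x$ produces the boundary bracket at $x=0,1$, a favorable term $-\delta\int_0^1(\cdots)\,dx$, and cross terms coming from $\sigma$, $\mathbf W$, $C^+$, $\mathbf C^-$ and the nonlocal coupling $\mathbf G(x)\pmb\beta(t,0)$. The bracket at $x=1$ vanishes by $\pmb\beta(t,1)=0$; at $x=0$ one substitutes $\alpha(t,0,y)=\mathbf Q(y)\pmb\beta(t,0)$ to convert $\|\alpha(t,0,\cdot)\|_{L^2}^2$ into a multiple of $\|\pmb\beta(t,0)\|^2$. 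Each cross term is then estimated by Cauchy–Schwarz and Young's inequality $2\langle f,g\rangle_{L^2}\le\|f\|_{L^2}^2+\|g\|_{L^2}^2$ using the finite constants collected in \eqref{eq:lyapb}, whose existence follows from Assumption~\ref{ass:infm} together with the kernel bounds. This yields an estimate of the form \eqref{eq:lyapdest}; choosing $\delta$ large enough to dominate the interior $L^2$-in-$y$ contributions and the $D_j$ large enough then forces $\dot V\le -c\,V$ for some $c>0$, so $V(t)\le e^{-ct}V(0)$ and, by equivalence of $V$ with $\|\cdot\|_{E_c}^2$, exponential decay of $\|(\alpha,\pmb\beta)(t)\|_{E_c}$. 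Invertibility of \eqref{eq:infmV} (Lemma~\ref{lem:invk}) transfers this to \eqref{eq:infm}, \eqref{eq:infmbc}.

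\emph{Main obstacle.} The delicate point is the boundary value $\pmb\beta(t,0)$, which enters twice: via the integration-by-parts bracket at $x=0$ (through the $\alpha$-boundary condition and the weight $\mathbf D\mathbf M^{-1}$) and via the source $\mathbf G(x)\pmb\beta(t,0)$ in \eqref{eq:infmts2}. To make the net $\pmb\beta(t,0)$-quadratic form negative, one must simultaneously take the diagonal entries of $\mathbf D$ above $\max\{M_Q^2,1\}$ and exploit the strictly lower-triangular structure \eqref{eq:GG} of $\mathbf G$, choosing the $D_j$ in a cascade ($D_1\gg D_2\gg\cdots\gg D_m$, each $D_j$ exceeding a multiple of $\sum_{i>j}D_i$) so that the contribution of $\mathbf D\mathbf M^{-1}\mathbf G+\mathbf G^T\mathbf M^{-1}\mathbf D$ at $x=0$ is controlled. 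This cascade, specific to the heterodirectional ($m\ge 2$) setting and absent in the $\infty+1$ case, is the crux; the remainder is a careful but routine bookkeeping of the $L^2$-in-$y$ norms of the ensemble variable.
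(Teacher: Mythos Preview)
Your proposal is correct and follows essentially the same approach as the paper: same abstract well-posedness argument via $A+BF$, same Lyapunov functional \eqref{eq:lyap}, same estimation scheme using the bounds \eqref{eq:lyapb}, and---crucially---you have correctly identified that the lower-triangular structure of $\mathbf G$ must be exploited through a cascade choice of the $D_j$ (each dominating a multiple of $\sum_{i>j}D_i$) to control the $\pmb\beta(t,0)$ contributions, which is exactly how the paper handles it via the diagonal matrix $\mathbf F$ in \eqref{eq:Fmat}.
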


\section{Well-Posedness of the Continuum Kernels} \label{sec:wpk}

\begin{theorem}
  \label{thm:infmkwp}
  Under Assumption~\ref{ass:infm}, the continuum kernel equations 
  \eqref{eq:infmk}--\eqref{eq:infmabc0} 
  have a well-posed solution $\mathbf{K} \in L^\infty(\mathcal{T};L^2([0,1]; \mathbb{R}^m))$ and 
  $\mathbf{L} \in L^\infty(\mathcal{T}; \mathbb{R}^{m\times m})$. Moreover, the solution is 
  piecewise continuous in $(x,\xi) \in \mathcal{T}$, where the set of discontinuities is of measure 
  zero.
\end{theorem}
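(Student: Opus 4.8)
The plan is to adapt the characteristics-plus-successive-approximations scheme of \cite{HuLDiM16,HuLVaz19} (for $n+m$ systems) and of \cite{AllKrs25} (for the continuum $\infty+1$ case) to the present $\infty+m$ setting: recast \eqref{eq:infmk}--\eqref{eq:infmabc0} as a system of Volterra integral equations along characteristics and solve it piece by piece on a finite partition of the prism $\mathcal{P}$ into subprisms on which the kernels remain continuous.

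First I would go to characteristic form. Because $\mathbf{M}$ is diagonal, the principal parts of \eqref{eq:infmkb}, \eqref{eq:infmka} decouple entrywise: $K_i(x,\xi,y)$ is transported along $\dot x=\mu_i(x)$, $\dot\xi=-\lambda(\xi,y)$ ($y$ frozen) and $L_{i,j}(x,\xi)$ along $\dot x=\mu_i(x)$, $\dot\xi=\mu_j(\xi)$, while all coupling between entries and the whole $y$/$\eta$-continuum enters only through zeroth-order terms and the nonlocal terms $\int_0^1\mathbf{K}(x,\xi,\eta)\sigma(\xi,\eta,y)\,d\eta$, $\int_0^1\mathbf{K}(x,\xi,y)\mathbf{W}(\xi,y)\,dy$, which are local in $(x,\xi)$. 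The characteristic ODEs have $C^1$ flows since $\lambda,\mathbf{M}\in C^1$ and $\lambda,\mu_m>0$ uniformly, and the order in which the images of the three faces $\xi=x$, $x=1$, $\xi=0$ occur along these flows is pinned down by the strict gap \eqref{eq:muass}. Integrating each entry from the face carrying its data---$\xi=x$ via \eqref{eq:infmkbcb} for $K_i$ and via \eqref{eq:infmkbca} for $L_{i,j}$ with $i\le j$; $x=1$ via \eqref{eq:infmkabc} for $L_{i,j}$ with $i>j$; and $\xi=0$ via \eqref{eq:infmkbcetmp1}---recasts \eqref{eq:infmk}--\eqref{eq:infmabc0} as a fixed-point problem $(\mathbf{K},\mathbf{L})=\mathcal{F}(\mathbf{K},\mathbf{L})$.

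Next I would build the partition. As in the $n+m$ case the only source of genuine discontinuity is the corner $(0,0)$: for $L_{i,j}$ with $i<j$ the value there prescribed along the diagonal by \eqref{eq:infmkbca} generically differs from the value prescribed along $\xi=0$ by \eqref{eq:infmkbcetmp1}, and (unlike at $(1,1)$, where the $C^0$ compatibility \eqref{eq:infmabc0} forces the two prescriptions to agree) it cannot be made to agree. Propagating this mismatch along the $L_{i,j}$-characteristic through $(0,0)$ gives, for each pair $i<j$, one $C^1$ arc $\gamma_{i,j}\subset\mathcal{T}$; set $\Gamma=\bigcup_{i<j}\gamma_{i,j}$. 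Since $\Gamma$ is generated by the $\mu_j$-characteristics alone it is $y$-independent, and being a finite union of $C^1$ arcs it has measure zero; moreover $\mathbf{K}$, being the integral of a bounded right-hand side against data continuous in $(x,y)$ along its own continuous characteristics, inherits no further discontinuities, so the discontinuity set of $(\mathbf{K},\mathbf{L})$ is contained in $\Gamma$. Let $\mathcal{T}_1,\dots,\mathcal{T}_N$ be the connected components of $\mathcal{T}\setminus\Gamma$, ordered (as in \cite{HuLDiM16}) so that the construction can be carried out subdomain by subdomain, and put $\mathcal{P}_k=\overline{\mathcal{T}_k}\times[0,1]$. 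On each $\mathcal{P}_k$ the characteristic projections (point $\mapsto$ the boundary point of $\mathcal{P}_k$ from which its characteristic originates) are continuous, because the flows depend continuously on their data and, on $\mathcal{T}_k$, every characteristic reaches a single, well-defined component of $\partial\mathcal{T}_k$.

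Finally I would run successive approximations on $\mathcal{P}_1,\dots,\mathcal{P}_N$ in order, at each step in the Banach space $L^\infty(\mathcal{T}_k;L^2([0,1];\mathbb{R}^m))\times L^\infty(\mathcal{T}_k;\mathbb{R}^{m\times m})$, inserting on the inflow part of $\partial\mathcal{P}_k$ the traces already constructed on $\mathcal{P}_1,\dots,\mathcal{P}_{k-1}$ together with the prescribed boundary data. As $\mathcal{F}$ is Volterra in the evolution variable along characteristics, a majorant/Gronwall estimate---using $\sigma\in C([0,1];L^2([0,1]^2;\mathbb{R}))$ and $\mathbf{W}\in C([0,1];L^2)$ to bound $\big\|\int_0^1\mathbf{K}(\cdot,\cdot,\eta)\sigma(\cdot,\eta,\cdot)\,d\eta\big\|_{L^2}$ and $\big\|\int_0^1\mathbf{K}(\cdot,\cdot,y)\mathbf{W}(\cdot,y)\,dy\big\|$ by constants times $\|\mathbf{K}\|$---shows that $\mathcal{F}$ is a contraction on $\mathcal{P}_k$ in a suitably weighted norm (equivalently, some iterate of it is), so it has a unique bounded fixed point there. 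The iterates are continuous in $(x,\xi)$ as $L^2([0,1];\mathbb{R}^m)$-valued maps (continuity of the data, of the characteristic projections, and of $x\mapsto\sigma(x,\cdot,\cdot)$ in $L^2$) and converge uniformly, so the limit is continuous on $\mathrm{int}\,\mathcal{T}_k$. Patching the $\mathcal{P}_k$ and recalling $|\Gamma|=0$ then yields the stated $L^\infty$ regularity and piecewise continuity. The step I expect to be the real obstacle is the partition: producing the complete, $y$-uniform family $\Gamma$ and the accompanying ordering of subdomains, since this is where the $n+m$ phenomenon (genuinely discontinuous kernels, the artificial data at $x=1$) must be reconciled with the continuum features (the $y$-parametrized $K_i$-characteristics and the nonlocal $\eta$-coupling); by contrast, once the $L^2$-in-$y$ functional setting is fixed, the per-subdomain contraction is a routine Volterra argument.
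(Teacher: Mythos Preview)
Your proposal is correct and follows essentially the same approach as the paper: characteristics, a $y$-independent partition of $\mathcal{T}$ dictated by the $L_{i,j}$-characteristics through $(0,0)$, and successive approximations in $L^\infty(\mathcal{T};L^2)$. On the step you flag as the obstacle, the paper exploits the fact that \eqref{eq:infmk} decouples by row index $i$ (the equations for $K_i,L_{i,1},\ldots,L_{i,m}$ involve only one another) and therefore partitions row by row rather than globally, setting $\mathcal{T}_i^p=\{(x,\xi):\rho_i^{p+1}(x)\le\xi\le\rho_i^p(x)\}$ for $p=i,\ldots,m$ with $\rho_i^p=\phi_p^{-1}\circ\phi_i$ and $\phi_j(x)=\int_0^x\mu_j^{-1}$; this gives at most $m$ segments per row with an explicit traversal order (from $\mathcal{T}_i^i$ toward $\mathcal{T}_i^m$ for $K_i$, and in directions depending on the sign of $i-j$ for $L_{i,j}$), which is cleaner than your global connected-components refinement but equivalent in substance.
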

  
The proof is presented at the end of this section by utilizing the following lemmas. First, the 
kernels are split into  subdomains to deal with the potential discontinuity in the $L_{i,j}$ kernels 
for $i<j$ stemming from $(x,\xi) = (0,0)$ due to the boundary conditions \eqref{eq:infmkbca} and 
\eqref{eq:infmkbce}, \eqref{eq:infmkbcb}. Once the kernels are split into subdomains, the 
resulting kernel equations can be solved by transforming them into integral 
equations along the characteristic curves and solving these integral equations by using the 
method of successive approximations combining \cite[Sect. VI]{AllKrs25} and \cite[Sect. 
VI]{HuLDiM16}. In particular, we need to ensure continuity of the characteristic curves such that 
the successive approximations for the $K_i$ kernels, for $i=1,\ldots,m$,  are $L^2$ in $y$ 
for almost all $(x,\xi) \in \mathcal{T}$.
	
\begin{lemma}[Splitting the kernels into subdomains of continuity]
\label{lem:split}
The kernel equations \eqref{eq:infmk} can be equivalently written in $L^\infty(\mathcal{T}; E_c)$
 as
\begin{subequations}
	\label{eq:infmkLk}%
	\begin{align}
	\resizebox{.95\columnwidth}{!}{$\mu_i(x)\partial_xK_i^p(x,\xi,y) -
		\partial_{\xi}K_i^p(x,\xi,y)\lambda(\xi,y) -
		K_i^p(x,\xi,y)\lambda_{\xi}(\xi,y)$} & = \nonumber \\
	\sum_{\ell=1}^m L_{i,\ell}^p(x,\xi)\theta_\ell(\xi,y) + \int\limits_0^1
	K^p_i(x,\xi,\eta)\sigma(\xi,\eta,y)d\eta, \label{eq:infmkLk2}
	\end{align}
	\begin{align}
	\mu_i(x)\partial_xL_{i,j}^p(x,\xi) +
	\mu_j(\xi)\partial_{\xi}L_{i,j}^p(x,\xi) + \mu'_j(x)L_{i,j}^p(x,\xi) 
	& = \nonumber \\
	\sum_{\ell=1}^m L_{i,\ell}^p(x,\xi)\psi_{\ell,j}(\xi) +
	\int\limits_0^1K_i^p(x,\xi,y)W_j(\xi,y)dy, \label{eq:infmkLk1}
	\end{align}
\end{subequations}
for $1 \leq i \leq p \leq m$ and $j=1,\ldots,m$,
where $L_{i,j}^p, K_i^p$ denote the restrictions of the
kernels to $\mathcal{T}_i^p$ and $\mathcal{P}_i^p$,
respectively, defined as
\begin{subequations}
	\label{eq:TPs}
	\begin{align}
		\mathcal{T}_i^p & = \left\{ (x,\xi)\in [0,1]^{2}:
		\rho_i^{p+1}(x) \leq \xi \leq \rho_i^p(x) \right\},  \label{eq:Ts} \\
		\mathcal{P}_i^p & = \left\{ (x,\xi,y)\in [0,1]^{3}:
		(x,\xi) \in \mathcal{T}_i^p \right\},  \label{eq:Ps} 
	\end{align}
\end{subequations}
where $\rho_i^{m+1} = 0$ for all $i = 1,\ldots,m$ and 
\begin{equation}
	\label{eq:rho}
	\rho_i^p(x) = \phi_p^{-1}(\phi_i(x)),\footnote{These are the characteristic curves of 
	\eqref{eq:infmkLk1}, which are strictly increasing in $x$ and satisfy $0 = \rho_i^{m+1}(x) < 
	\rho_i^m(x) < \cdots < \rho_i^i(x) = x$ for all $1 \leq i \leq p \leq m$ and $x \in [0,1]$ by 
	\eqref{eq:muass}.}
\end{equation}
for $1 \leq i \leq p \leq m$ with
\begin{equation}
	\label{eq:phim}
	\phi_i(x) = \int\limits_0^x \frac{ds}{\mu_i(s)}, \qquad i = 1,\ldots,m.
\end{equation}
The boundary conditions for \eqref{eq:infmkLk} are given by
\begin{subequations}
	\label{eq:infmkLkbc}%
	\begin{align}
		\forall j \neq i: & & L_{i,j}^i(x,x)
		& = -\frac{\psi_{i,j}(x)}{\mu_i(x) - \mu_j(x)}, \label{eq:infmkLkbca}
		\\
		\forall i: & & K_i^i(x,x,y)
		& = -\frac{\theta_i(x,y)}{\lambda(x,y) +
			\mu_i(x)}, \label{eq:infmkLkbcb} \\
		\forall i\leq j: & & L_{i,j}^m(x,0) & =
		\resizebox{.58\columnwidth}{!}{$\displaystyle \frac{1}{\mu_j(0)} \int\limits_0^1
			K_i^m(x,0,y)\lambda(0,y)Q_j(y)dy$}, \label{eq:infmkLkbcc}
	\end{align}
\end{subequations}
for $i,j = 1,\ldots,m$, with the artificial boundary conditions 
\begin{equation}
	\label{eq:abct}
	L_{ij}^p(1,\xi) = l_{i,j}^{(1)}(\xi),
\end{equation}
for all $\xi \in [\rho_i^{p+1}(1), \rho_i^p(1)]$, $p = i,\ldots,m$, and $1\leq j < i \leq m$. Moreover,  
the segmented kernels $K_i^p, L_{i,j}^p$ are subject to continuity conditions
\begin{subequations}
	\label{eq:infmkbcc}
	\begin{align}
		\forall i < p,\forall j\neq p: & &  L_{i,j}^{p-1}(x,\rho_i^p(x))
		& = L_{i,j}^p(x,\rho_i^p(x)), \label{eq:infmkLkcbca} \\
		\forall i < p: & & K_i^{p-1}(x,\rho_i^p(x),y)
		& = K_i^p(x,\rho_i^p(x),y), \label{eq:infmkLkcbcb}
	\end{align} 
\end{subequations}
for all $i,j = 1,\ldots,m$, $i < p \leq m$, and $x, y \in [0,1]$.
\begin{proof}
After splitting the kernels into the $\mathcal{T}_i^p$ and $\mathcal{P}_i^p$ segments, the 
transformation \eqref{eq:infmV2} can be rewritten
componentwise for $i=1,\ldots,m$ as 
\begin{align}
	\label{eq:betai}
	\beta_i(t,x) & = v_i(t,x) - \sum_{j=1}^m\sum_{p=i}^m
	\int\limits_{\rho_i^{p+1}(x)}^{\rho_i^p(x)}L_{i,j}^p(x,\xi)v_j(t,\xi)d\xi
	\nonumber \\
	& \qquad - \sum_{p=i}^m \int\limits_{\rho_i^{p+1}(x)}^{\rho_i^p(x)}\int\limits_0^1 
	K_i^p(x,\xi,y)u(t,\xi,y)dyd\xi.
\end{align}
The kernel equations \eqref{eq:infmkLk} are obtained by  inserting
\eqref{eq:betai} to \eqref{eq:infmts2} and integrating by parts once
(similarly to \ref{app:infmk}). In fact, the kernel equations 
\eqref{eq:infmkLk} are exactly of the same form as \eqref{eq:infmk} (written componentwise), and 
the boundary conditions \eqref{eq:infmkLkbc}, \eqref{eq:abct} correspond to \eqref{eq:infmkbca}, 
\eqref{eq:infmkbcb},  \eqref{eq:infmkbcetmp1}, and \eqref{eq:infmkabc} along the respective 
boundaries (see Figure~\ref{fig:Tseg} for an illustration of the $\mathcal{T}_i^p$ segments). 
Thus, the only difference to \eqref{eq:infmk}--\eqref{eq:infmkabc} are the continuity conditions 
\eqref{eq:infmkbcc}, which arise due to the segmentation of $\mathcal{T}$ when differentiating 
\eqref{eq:betai} in $x$ and integrating by parts once.
\end{proof}
\end{lemma}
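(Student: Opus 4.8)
The plan is to verify the stated equivalence by direct computation, mirroring the derivation of the unsegmented equations \eqref{eq:infmk} in \ref{app:infmk} but now differentiating the segmented transformation \eqref{eq:betai}. First I would record the geometry underlying the splitting. For each fixed $i$, the curves $\xi=\rho_i^p(x)=\phi_p^{-1}(\phi_i(x))$, $p=i,\dots,m$, are precisely the characteristics of the transport operator $\mu_i(x)\partial_x+\mu_j(\xi)\partial_\xi$ of \eqref{eq:infmkLk1} emanating from the origin; by \eqref{eq:muass} one has $\phi_i<\phi_{i+1}<\cdots<\phi_m$ on $(0,1]$, so (as in the footnote to \eqref{eq:rho}) these curves are strictly increasing, nested as $0=\rho_i^{m+1}(x)<\rho_i^m(x)<\cdots<\rho_i^i(x)=x$, and $C^1$ with $(\rho_i^p)'(x)=\mu_p(\rho_i^p(x))/\mu_i(x)>0$. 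Consequently $\{\mathcal T_i^p\}_{p=i}^m$ partitions $\mathcal T$ up to the measure-zero union of these curves, $\{\mathcal P_i^p\}_{p=i}^m$ partitions $\mathcal P$ likewise, \eqref{eq:betai} is nothing but \eqref{eq:infmV2} written as the sum of integrals over the pieces with $L_{i,j}^p,K_i^p$ the restrictions of $\mathbf L,\mathbf K$, and any discontinuity in $L_{i,j}$, $i<j$, forced at $(x,\xi)=(0,0)$ by the incompatibility of \eqref{eq:infmkbca} with \eqref{eq:infmkbcb}, \eqref{eq:infmkbce}, can only propagate along $\{\xi=\rho_i^j(x)\}$, so each piece is a region of continuity.

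One direction of the equivalence is immediate: restricting any $\mathbf L,\mathbf K$ solving \eqref{eq:infmk}--\eqref{eq:infmkabc} to the pieces yields $L_{i,j}^p,K_i^p$ solving \eqref{eq:infmkLk} (which is \eqref{eq:infmk} written componentwise on each piece), with \eqref{eq:infmkLkbca}, \eqref{eq:infmkLkbcb}, \eqref{eq:infmkLkbcc}, \eqref{eq:abct} being the restrictions of \eqref{eq:infmkbca}, \eqref{eq:infmkbcb}, \eqref{eq:infmkbcetmp1}, \eqref{eq:infmkabc} to the relevant boundary faces, and \eqref{eq:infmkbcc} holding trivially since a single-valued function agrees with itself across interfaces. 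For the converse I would start from pieces $L_{i,j}^p,K_i^p$ solving \eqref{eq:infmkLk}--\eqref{eq:infmkbcc}: the continuity conditions make them glue into well-defined $\mathbf L\in L^\infty(\mathcal T;\mathbb R^{m\times m})$ and $\mathbf K\in L^\infty(\mathcal T;L^2([0,1];\mathbb R^m))$, and I must check the glued pair solves \eqref{eq:infmk}--\eqref{eq:infmkabc}. For this I would compute $\partial_t\beta_i-\mu_i(x)\partial_x\beta_i$ from \eqref{eq:betai}: the $t$-derivative is removed using the plant \eqref{eq:infm1}, \eqref{eq:infm2} and then integrated by parts once in $\xi$ on each piece, while the $x$-derivative is taken by the Leibniz rule, generating in addition evaluations at the moving limits $\xi=\rho_i^p(x)$. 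Requiring the terms under the remaining interior double integrals to reproduce the target right-hand side $\mathbf G(x)\pmb\beta(t,0)$ (componentwise, $\sum_k G_{i,k}(x)\beta_k(t,0)$) gives back exactly \eqref{eq:infmkLk} together with \eqref{eq:infmkLkbca}, \eqref{eq:infmkLkbcb}, \eqref{eq:infmkLkbcc} from the evaluations at $\xi=x$, $\xi=0$ and $x=0$ (using $u(t,0,y)=\mathbf Q(y)\mathbf v(t,0)$), while \eqref{eq:abct} is inherited from \eqref{eq:infmkabc}, \eqref{eq:infmabc0}, which are posed at $x=1$ and hence restrict piecewise.

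The crux is the bookkeeping of the evaluations at the \emph{interior} interfaces $\xi=\rho_i^p(x)$, $i<p\le m$: each such curve is the upper limit of the $\mathcal T_i^p$-piece and the lower limit of the $\mathcal T_i^{p-1}$-piece, so it collects an integration-by-parts term and a Leibniz term from each side. Using $(\rho_i^p)'(x)=\mu_p(\rho_i^p(x))/\mu_i(x)$, the four $v_j$-contributions combine to $(\mu_p-\mu_j)(\rho_i^p(x))\,(L_{i,j}^p-L_{i,j}^{p-1})(x,\rho_i^p(x))\,v_j(t,\rho_i^p(x))$ and the four $u$-contributions to $\int_0^1(\lambda(\rho_i^p(x),y)+\mu_p(\rho_i^p(x)))(K_i^p-K_i^{p-1})(x,\rho_i^p(x),y)\,u(t,\rho_i^p(x),y)\,dy$. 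For $j=p$ the factor $\mu_p-\mu_p$ vanishes identically — the curve $\rho_i^p$ is the origin-characteristic of the $L_{i,p}$ equation, which is exactly why $L_{i,p}$, the component carrying the jump, is exempted from continuity in \eqref{eq:infmkLkcbca}; for $j\neq p$, $\mu_p-\mu_j\neq0$ by \eqref{eq:muass}, and since $\lambda+\mu_p>0$ always (Assumption~\ref{ass:infm}), both families of interface terms vanish for all admissible states if and only if \eqref{eq:infmkLkcbca}, \eqref{eq:infmkLkcbcb} hold. Hence the segmented transformation \eqref{eq:betai} maps \eqref{eq:infm}, \eqref{eq:infmbc} into the target \eqref{eq:infmts}, \eqref{eq:infmtsbc} precisely when \eqref{eq:infmkLk}--\eqref{eq:infmkbcc} are satisfied, which is the asserted equivalence. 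I expect the only real obstacle to be purely organizational — tracking the signs and, for the $K$-terms, the $y$-integration through the combined integration-by-parts/Leibniz step — with all the analytic input ($\lambda>0$, $\mu_m>0$, the strict ordering \eqref{eq:muass}, and $C^1$-smoothness of $\phi_i$ and hence of the $\rho_i^p$) coming directly from Assumption~\ref{ass:infm}.
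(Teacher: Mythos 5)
Your proposal is correct and takes essentially the same route as the paper's proof: rewrite the transformation over the segments as \eqref{eq:betai}, compute $\partial_t-\mu_i\partial_x$ by Leibniz plus one integration by parts, match the interior integrands to \eqref{eq:infmkLk}, the boundary evaluations at $\xi=x$, $\xi=0$, and $x=1$ to \eqref{eq:infmkLkbc}, \eqref{eq:abct}, and read the interface evaluations at $\xi=\rho_i^p(x)$ as the continuity conditions \eqref{eq:infmkbcc}. Your explicit identification of the interface prefactors $(\mu_p-\mu_j)$ and $(\lambda+\mu_p)$ — and in particular the observation that $\mu_p-\mu_j$ vanishes for $j=p$ because $\xi=\rho_i^p(x)$ is the origin-characteristic of the $L_{i,p}$ transport operator, which is exactly why $j=p$ is exempted in \eqref{eq:infmkLkcbca} — is a useful elaboration that the paper's terse proof leaves implicit.
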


\begin{figure}[htbp]
\centerline{\includegraphics[scale=.7]{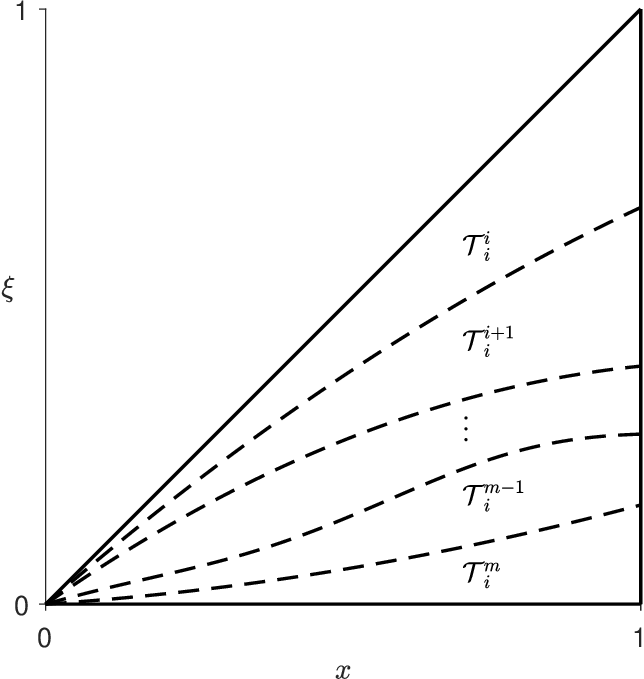}}
\caption{Illustration of the segments $\mathcal{T}_i^p$ for $1\leq i \leq p
	\leq m$. The dashed lines are the characteristic curves $\xi = \rho_i^p(x)$ for $i < p \leq m$.}
\label{fig:Tseg}
\end{figure}

The kernel equations \eqref{eq:infmkLk} for $L_{i,j}^p$ and $K_i^p$ on the segments 
$\mathcal{T}_i^p$ and $\mathcal{P}_i^p$ with boundary conditions 
\eqref{eq:infmkLkbc}--\eqref{eq:infmkbcc} can be 
transformed into integral equations. In order to do this, in Lemma~\ref{lem:ccc} we solve the 
characteristic projections for \eqref{eq:infmkLk}.

\begin{lemma}[Continuity of characteristic projections]
	\label{lem:ccc}
The characteristic projections for the kernel equations \eqref{eq:infmkLk} are continuous on 
$\mathcal{T}_i^p$ and $\mathcal{P}_i^p$ for all $1 \leq i \leq p \leq m$.
\begin{proof}
As $\lambda$ is assumed to be in $C^1([0,1]^2;\mathbb{R})$, we can argue pointwise in $y\in 
[0,1]$ and solve the characteristic projections for the $K_i^p$ kernels from the following Cauchy 
problems on $s \in [0, s_{i,p}^f(y)]$ for arbitrary, fixed $y\in[0,1]$ and $1 \leq i \leq p \leq m$
\begin{subequations}
\label{eq:infmkkic}%
\begin{align}
  \frac{d}{ds}\hat{x}_{i,p}(s,y) & = -\mu_i(\hat{x}_{i,p}(s,y)), \\
  \frac{d}{ds}\hat{\xi}_{i,p}(s,y) & = \lambda(\hat{\xi}_{i,p}(s,y),y),
\end{align}
\end{subequations}
with boundary conditions $\hat{x}_{i,p}(0,y) = x,
\hat{x}_{i,p}\left(s^f_{i,p},y\right) = \hat{x}^f_{i,p}(y), \hat{\xi}_{i,p}(0,y)
= \xi, \hat{\xi}_{i,p}\left(s^f_{i,p},y\right) = \hat{\xi}^f_{i,p}(y)$. Since $\mu_i$ and 
$\lambda(\cdot,y)$ are 
continuously differentiable and positive by Assumption~\ref{ass:infm}, \eqref{eq:infmkkic} has a 
unique (local in $s$) solution for any $(x,\xi) \in \mathcal{T}_i^p$ (and each $y$) by 
Picard---Lindel\"of theorem \cite[Thm 2.2]{TesBook}, 
where $\hat{x}_{i,p}$ is strictly decreasing in $s$ and $\xi_{i,p}$ is strictly increasing in $s$. 
Thus, for 
any initial condition $(x,\xi) \in \mathcal{T}_i^p$, the solution to \eqref{eq:infmkkic} tends towards 
the boundary $\xi =\rho_i^p(x)$, where it terminates at $s = s_{i,p}^f(y)$ with the terminal 
condition $\left(\hat{x}_{i,p}^f,\hat{\xi}_{i,p}^f\right)$, and  the corresponding boundary condition 
is given by
\eqref{eq:infmkLkbcb} for $i = p$,  or by \eqref{eq:infmkLkcbcb} for $i < p$.
Considering that we have only split the domain of the kernel equations in the
$(x,\xi)$ plane, we can employ the same continuity arguments, not only
in $y$ but also in $x$ and $\xi$, as in \cite[Lem. 4]{AllKrs25} on each $\mathcal{P}_i^p$
for all $1 \leq i \leq p \leq m$. Thus, the characteristic projections solving
\eqref{eq:infmkkic} are continuous on each $\mathcal{P}_i^p$,
particularly as $\lambda$ and $\mu_i$ are continuously differentiable by 
Assumption~\ref{ass:infm}.

The characteristic projections for the $L_{i,j}^p$ kernels are
analogous to the $\ell_{i,j}^p$ kernels encountered in the $n+m$ case. Thus, this observation 
allows us to study continuity of the characteristic projections for the $L_{i,j}^p$ kernels similarly 
to \cite[Thm A.1]{HuLVaz19} and
\cite[Sect. VI.A.2]{HuLDiM16}. To elaborate,
for all $i,j = 1,\ldots,m$ and $p = i,\ldots,m$, the characteristic
projections for the $L_{i,j}^p$ kernels are solutions of the following
Cauchy problems on $s \in [0, s_{i,j,p}^f]$ 
\begin{subequations}
\label{eq:infmkLic}%
\begin{align}
  \frac{d}{ds}\hat{x}_{i,j,p}(s)
  & = \epsilon_{i,j}\mu_i(\hat{x}_{i,j,p}(s)), \\
   \frac{d}{ds}\hat{\xi}_{i,j,p}(s)
  & = \epsilon_{i,j}\mu_j(\hat{\xi}_{i,j,p}(s)), 
\end{align}
\end{subequations}
with boundary conditions $\hat{x}_{i,j,p}(0) = x,
\hat{x}_{i,j,p}\left(s_{i,j,p}^f\right) = \hat{x}_{i,j,p}^f$, $\hat{\xi}_{i,j,p}(0)
= \xi, \hat{\xi}_{i,j,p}\left(s_{i,j,p}^f\right) =
\hat{\xi}_{i,j,p,}^f$, and $\epsilon_{i,j}$ defined~as 
\begin{equation}
  \label{eq:epsij}
  \epsilon_{i,j} = 
\begin{cases}
  1, & i > j \\
  -1, & i \leq j
\end{cases}.
\end{equation}
For initial condition $(x,\xi) \in
\mathcal{T}_i^p$, the location of the terminal condition
$\left(\hat{x}_{i,j,p}^f, \hat{\xi}_{i,j,p}^f\right)$ depends on $i,j$ and
$p$ (cf. \cite[Figs. 4--6]{HuLDiM16}) as follows.
\begin{itemize}
\item For $i>j$, the terminal condition
is located either on $x=1$ with boundary condition \eqref{eq:abct} for
$i \leq p \leq m$, or on $\xi = x$ with boundary condition
\eqref{eq:infmkLkbca} for $p = i$, or on $\xi = \rho_i^p(x)$ with
boundary condition \eqref{eq:infmkLkcbca} for $i < p \leq m$.
\item For $i=j$, the terminal condition
is located on $\xi = 0$ for $p=m$ with boundary condition
\eqref{eq:infmkLkbcc}, and on $\xi = \rho_i^{p+1}(x)$ for $i \leq p <
m$ with boundary condition \eqref{eq:infmkLkcbca} (for $p \to p+1$).
\item For $i < j$, the terminal
condition is located on $\xi = x$ for $p = i$ with boundary condition
\eqref{eq:infmkLkbca}, on $\xi = \rho_i^p(x)$ for $i < p < j$ with
boundary condition \eqref{eq:infmkLkcbca}, on $\xi = 0$ for $p =
m$ with boundary condition \eqref{eq:infmkLkbcc}, and on $\xi =
\rho_i^{p+1}(x)$ for $j \leq p < m$ with boundary condition
\eqref{eq:infmkLkcbca} (for $p \to p+1$).
\end{itemize}
Thus, there exist unique, continuous characteristic projections as the solutions to 
\eqref{eq:infmkLic} on $s \in [0, s_{i,j,p}^f]$, as every $\mu_i$ is continuously 
differentiable by Assumption~\ref{ass:infm}.
\end{proof}
\end{lemma}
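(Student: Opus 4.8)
The plan is to treat the two families of characteristic projections separately, in each case reducing the claim to classical continuous-dependence theory for ODEs together with a transversality argument that upgrades the exit ``time'' from a merely measurable to a continuous function of the data.

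For the $K_i^p$ projections solving \eqref{eq:infmkkic}, I would fix $y\in[0,1]$ and note that the two scalar Cauchy problems are decoupled, with right-hand sides $-\mu_i(\cdot)$ and $\lambda(\cdot,y)$ that are $C^1$ and strictly positive on $[0,1]$ by Assumption~\ref{ass:infm}; hence Picard--Lindel\"of \cite[Thm 2.2]{TesBook} yields a unique local solution with $\hat{x}_{i,p}(\cdot,y)$ strictly decreasing and $\hat{\xi}_{i,p}(\cdot,y)$ strictly increasing. The next step is to show that the trajectory issued from any $(x,\xi)\in\mathcal{T}_i^p$ leaves $\mathcal{T}_i^p$ through the curve $\xi=\rho_i^p(x)$ at a finite time $s_{i,p}^f(y)$: the scalar quantity $g(s)=\hat{\xi}_{i,p}(s,y)-\rho_i^p(\hat{x}_{i,p}(s,y))$ satisfies $g'(s)=\lambda(\hat{\xi}_{i,p},y)+(\rho_i^p)'(\hat{x}_{i,p})\mu_i(\hat{x}_{i,p})$, which is bounded below by a positive constant since $\rho_i^p$ is $C^1$ and strictly increasing; therefore $g$ increases strictly to $0$ in finite time (it cannot instead exit through $x=0$, where $\mathcal{T}_i^p$ degenerates to the origin while $\hat{\xi}_{i,p}\ge\xi>0$), and by the implicit function theorem $s_{i,p}^f$ depends continuously on $(x,\xi,y)$. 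Joint continuity of $(\hat{x}_{i,p},\hat{\xi}_{i,p})$ in $(s,x,\xi,y)$ then follows from continuous dependence of ODE solutions on initial data and on the parameter $y$, which enters only through $\lambda(\cdot,y)\in C^1([0,1]^2;\mathbb{R})$; composing with the continuous exit time gives continuity of $(\hat{x}_{i,p}^f,\hat{\xi}_{i,p}^f)$ on $\mathcal{P}_i^p$. Since the segmentation only partitioned the $(x,\xi)$-plane, this is precisely the argument of \cite[Lem. 4]{AllKrs25} carried out on each $\mathcal{P}_i^p$.

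For the $L_{i,j}^p$ projections solving \eqref{eq:infmkLic}, the right-hand sides $\epsilon_{i,j}\mu_i(\cdot)$ and $\epsilon_{i,j}\mu_j(\cdot)$ are again $C^1$, so Picard--Lindel\"of gives unique local solutions that move monotonically (both coordinates increasing if $i>j$, both decreasing if $i\le j$). The substantive part is the case analysis already sketched after \eqref{eq:epsij}: for each triple $(i,j,p)$ one identifies, by comparing $\epsilon_{i,j}$ with the ordering of the characteristic curves $\phi_i,\phi_j,\phi_p$ (cf.\ \cite[Figs. 4--6]{HuLDiM16}), which boundary piece of $\mathcal{T}_i^p$ --- one of $x=1$, $\xi=x$, $\xi=0$, $\xi=\rho_i^p(x)$, or $\xi=\rho_i^{p+1}(x)$ --- carries the terminal condition. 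Each such curve is $C^1$, and using \eqref{eq:muass} together with the strict monotonicity of $\phi_p^{-1}\circ\phi_i$ one checks it is crossed transversally, so the same implicit-function argument makes $s_{i,j,p}^f$ continuous in $(x,\xi)$; continuity of the projections on $\mathcal{T}_i^p$ then follows exactly as in \cite[Thm A.1]{HuLVaz19} and \cite[Sect. VI.A.2]{HuLDiM16}.

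The step I expect to be the \emph{main obstacle} is the bookkeeping in the $L_{i,j}^p$ case: verifying that the enumerated terminal-boundary assignment is exhaustive, consistent across the continuity conditions \eqref{eq:infmkbcc}, and --- crucially --- that transversality (hence continuity of the exit time) is not lost at the corners of $\mathcal{T}_i^p$ where two candidate terminating curves meet, e.g.\ where $\xi=\rho_i^p(x)$ meets $\xi=x$ when $p=i$, or meets $x=1$. This is where monotonicity of the $\rho_i^p$ and the gap condition \eqref{eq:muass} do the real work, ruling out tangency of the vector field to the exit curve and any re-entry into the segment being left.
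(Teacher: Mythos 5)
Your argument follows the same route as the paper: Picard--Lindel\"of for local existence, strict monotonicity of the characteristic coordinates, a case-by-case identification of the terminal boundary on each $\mathcal{T}_i^p$, and then continuity on each segment by reducing to the arguments in \cite[Lem.~4]{AllKrs25} (for $K_i^p$) and \cite[Thm~A.1]{HuLVaz19}, \cite[Sect.~VI.A.2]{HuLDiM16} (for $L_{i,j}^p$). The explicit transversality computation via $g(s)=\hat{\xi}_{i,p}(s,y)-\rho_i^p(\hat{x}_{i,p}(s,y))$ with $g'\ge m_\lambda>0$, and the implicit-function-theorem step yielding continuity of the exit time, are correct and simply make explicit the content that the paper delegates to the cited references, so this is essentially the paper's proof with more of the bookkeeping written out.
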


As the final step, we transform the kernel equations \eqref{eq:infmkLk} into integral equations 
along the characteristic curves. By virtue of Lemma~\ref{lem:ccc}, we can then proceed with the 
method of successive approximations to obtain the unique
continuous kernels $K_i^p, L_{i,j}^p$ solving
\eqref{eq:infmkLk}--\eqref{eq:infmkbcc} on each $\mathcal{T}_i ^p$ by Lemma~\ref{lem:sa}. 
Towards this end, integrating \eqref{eq:infmkLk} along the characteristic curves and
plugging in the boundary conditions
\eqref{eq:infmkLkbc}--\eqref{eq:infmkbcc} gives
\begin{subequations}
\label{eq:infmKie}%
\begin{align}
  K_i^p\left(x,\xi,y\right) -
  B^1_{i,p}\left(x^f_{i,p}\left(y\right),y\right) & = \nonumber \\
  -\int\limits_0^{s_{i,p}^f(y)}\left(
  K_i^p\left(\hat{x}_{i,p}\left(s,y\right),\hat{\xi}_{i,p}\left(s,y\right),y\right)
  \lambda_{\xi}\left(\hat{\xi}_{i,p}\left(s,y\right),y\right) \right.
  & \nonumber \\
+
\int\limits_0^1K_i^p\left(\hat{x}_{i,p}\left(s,y\right),\hat{\xi}_{i,p}\left(s,y\right),\eta\right)
  \sigma\left(\hat{\xi}_{i,p}\left(s,y\right),\eta,y\right)d\eta &
  \nonumber \\ \left.
   +
  \sum_{\ell=1}^mL_{i,\ell}^p\left(\hat{x}_{i,j,p}\left(s\right),\hat{\xi}_{i,j,p}\left(s\right)\right)
  \theta_{\ell}\left(\hat{\xi}_{i,j,p}\left(s\right),y\right)
  \right)ds,
& \label{eq:infmKie1} \\
  L_{i,j}^p\left(x,\xi\right) -
  B^2_{i,j,p}\left(\hat{\star}_{i,j,p}\left(s^f_{i,j,p}\right)\right)
  & = \nonumber \\ +\epsilon_{i,j}\int\limits_0^{s_{i,j,p}^f} \left(
  \mu_j'\left(\hat{x}_{i,j,p}\left(s\right)\right)
  L_{i,j}^p\left(\hat{x}_{i,j,p}\left(s\right),\hat{\xi}_{i,j,p}\left(s\right)\right)
  \right.  & \nonumber \\
  -  \int\limits_0^1K_i^p\left(\hat{x}_{i,j,p}\left(s\right),\hat{\xi}_{i,j,p}\left(s\right),y\right)
  W_j\left(\hat{\xi}_{i,j,p}\left(s\right),y\right)dy & \nonumber \\
  \left.
  -\sum_{\ell=1}^mL_{i,\ell}^p\left(\hat{x}_{i,j,p}\left(s\right),\hat{\xi}_{i,j,p}(s)\right)
  \psi_{\ell,j}\left(\hat{\xi}_{i,j,p}(s)\right)
  \right)ds, \label{eq:infmKie2}
\end{align}
\end{subequations} where, for $j = 1,\ldots,m$ and $1 \leq i \leq p \leq m$,
\begin{subequations}
\label{eq:bctmp}%
\begin{align} B_{i,p}^1(x,y) & = \begin{cases}
  -\frac{\theta_i(x,y)}{\lambda(x,y) + \mu_i(x)}, & p = i \\
  K_i^{p-1}(x,\rho_i^p(x),y), & p > i
\end{cases}, \label{eq:bctmp1}
\end{align}
\begin{align}
B_{i,j,p}^2(\star) & =
\begin{cases}
  -\frac{\psi_{i,j}(x)}{\mu_i(x) - \mu_j(x)}, & p= i, i \neq j \\
  l_{i,j}^{(1)}(\xi), & p \geq i > j \\
  L_{i,j}^{p-1}(x,\rho_i^p(x)), & p > i > j \\
  L_{i,j}^{p-1}(x,\rho_i^p(x)),
  & i < p < j \\
  \resizebox{.42\columnwidth}{!}{$\frac{1}{\mu_j(0)} \int\limits_0^1
  K_i^m(x,0,y)\lambda(0,y)Q_j(y)dy$}, & p = m, i \leq j \\
  L_{i,j}^{p+1}(x,\rho_i^{p+1}(x)), & i \leq j \leq p < m
\end{cases}, \label{eq:bctmp2}
\end{align}
\end{subequations}
denote the boundary conditions according to the
terminal conditions of the characteristic projections solved in Lemma~\ref{lem:ccc}. The integral 
form \eqref{eq:infmKie} of the kernel
equations can then be employed in constructing the series of
successive approximations, by first inserting (arbitrary) initial
guesses for $K_i^p$ and $L_{i,j}^p$. The convergence of such successive approximations is 
established in Lemma~\ref{lem:sa}.

\begin{lemma}[Convergence of successive approximations]
\label{lem:sa}
Let  $j = 1,\ldots,m$ and $1 \leq i \leq p \leq m$ be arbitrary, and denote the sequences of 
successive approximations for the respective kernels $K_i^p$ and $L_{i,j}^p$ corresponding to 
\eqref{eq:infmKie}, \eqref{eq:bctmp} by 
$\left(K_{\ell}\right)_{\ell=0}^{\infty}$ and $\left( L_{\ell}\right)_{\ell=0}^{\infty}$, respectively, 
where we initialize $K_0$ and $L_0$ to zero. Then, the sequences of successive approximations 
converge such that 
\begin{subequations}
	\label{eq:saconv}
	\begin{align}
		\lim_{\ell \to \infty}\max_{(x,\xi) \in \mathcal{T}_i^p} \left\| K_\ell(x,\xi,\cdot) - 
		K_i^p(x,\xi,\cdot)\right\|_{L^2} & = 0, \\
		\lim_{\ell \to \infty}\max_{(x,\xi) \in \mathcal{T}_i^p}\left| L_\ell(x,\xi) - 
		L_{i,j}^p(x,\xi) \right| & = 
		0.
	\end{align}
\end{subequations}
\begin{proof}
Denote the differences of successive approximations by $\Delta K_\ell = K_{\ell} - K_{\ell-1}$ and 
$\Delta L_\ell = L_{\ell} - L_{\ell-1}$ for $\ell \geq 1$. As $K_0$ and $L_0$ were initialized to zero, 
the terms in the  sequences of successive approximations for $\ell \geq 1$ can be written as 
\begin{equation}
	\label{eq:saser}
	K_\ell = \sum_{l=1}^\ell \Delta K_l, \qquad L_\ell = \sum_{l=1}^\ell \Delta L_l.
\end{equation}
Now, the statement of the lemma is equivalent to the convergence of the series of differences 
\eqref{eq:saser} in the stated sense, which follows by showing that 
$\Delta K_\ell$ and $\Delta L_\ell$, for any $\ell \geq 1$, satisfy
\begin{subequations}
	\label{eq:DKL}%
	\begin{align}
		\| \Delta K_{\ell}(x,\xi,\cdot)\|_{L^{2}}
		& \leq M \frac{M_{K,L}^\ell\left(x - (1-\epsilon)\xi
			\right)^{\ell}}{\ell!}, \label{eq:DKL1} \\ 
		\left| \Delta L_{\ell}(x,\xi)\right|
		& \leq M \frac{M_{K,L}^\ell \left(x - (1-\epsilon)\xi
			\right)^{\ell}}{\ell!}, \label{eq:DKL2}
	\end{align}
\end{subequations}
uniformly on any $\mathcal{T}_i^p$, where $M, M_{K,L} > 0$ are given by
\begin{subequations}
	\label{eq:MM}%
	\begin{align}
		M & = \resizebox{.8\columnwidth}{!}{$\displaystyle M_B
                    + \left(1 + M_Q^1\right)\max_{x,y \in  
				[0,1]}\max_{j = \left\{
				1,\ldots,m \right\}}
                    \frac{|\theta_j(x,y)|}{\lambda(x,y) + \mu_i(x)}$}, 
		\\
		M_{K,L} & = m(1+M_Q^1)\left(M_{\lambda}^1 + M_{\sigma} + M_{\theta}
		\right)M_\lambda^\epsilon \nonumber \\
		& \qquad   + m\left(M_{\mu}^1 + M_W + M_{\psi}\right)M_\mu^\epsilon \label{eq:MM2},
	\end{align}
\end{subequations}
where $M_B = \max \left\{M_B^1, M_B^2 \right\}$ with
\begin{subequations}
	\label{eq:MB}%
	\begin{align}
		M_B^1 & = \max_{1 \leq i \neq j \leq m}\max_{x\in [0,1]} \left|
		\frac{\psi_{i,j}(x)}{\mu_i(x)-\mu_j(x)} \right|, \label{eq:MB1} \\
		M_B^2 & = \max_{1 \leq j < i \leq m}\max_{\xi \in
                        [0,1]} \left| \ell_{i,j}^{(1)}(\xi) \right|, 
	\end{align}
\end{subequations}
$M_\sigma, M_W$ and $M_\lambda, M_\mu$ are given by \eqref{eq:Ms}, \eqref{eq:MW}, 
and \eqref{eq:Mlammu}, respectively,
\begin{subequations}
	\label{eq:pbds}%
	\begin{align}
		M_{\lambda}^1 & = \max_{x,y\in [0,1]} \left|\lambda_x(x,y)\right|, \\
		M_{\mu}^1 & =  \max_{j=\left\{ 1,\ldots,m
			\right\}}\max_{x \in [0,1]}\left| \mu_j'(x) \right|, \\ 
		M_{\theta} & = \sum_{j=1}^m\max_{x\in [0,1]}\|\theta_{j}(x,\cdot)\|_{L^2}, \\
		M_{\psi} & = \max_{x\in [0,1]}\| \pmb{\Psi}(x) \|_1, \\
		M_Q^1 & = \max_{j=\left\{ 1,\ldots,m \right\}}\max_{y\in [0,1]}\
		\frac{\lambda(0,y)}{\mu_j(0)}\|Q_j\|_{L^2},
	\end{align}
\end{subequations}
where the parameter $\epsilon$ is taken 
such that
\begin{equation}
	\label{eq:eps}
	0 < \epsilon < 1 - \max_{1 \leq j < i \leq m}  \max_{
		x,\xi\in [0,1]}\frac{\mu_i(x)}{\mu_j(\xi)},\footnote{Such $\epsilon$ exists by \eqref{eq:muass}.}
\end{equation}
and
\begin{subequations}
	\label{eq:Mlm}%
	\begin{align}
		M_{\lambda}^{\epsilon} & = \max_{i \in \left\{ 1,\ldots,m \right\}}  \max_{x,\xi,y \in [0,1]}
		\frac{1}{\mu_i(\xi)+(1-\epsilon)\lambda(x,y)}, \\
		M_{\mu}^{\epsilon} & = \max_{i,j \in \left\{ 1,\ldots,m \right\}}  \max_{x,\xi
			\in [0,1]} \frac{-\epsilon_{i,j}}{\mu_i(\xi) -
			(1-\epsilon)\mu_j(x)},
	\end{align}
	where $\epsilon_{i,j}$ is given in \eqref{eq:epsij}.
\end{subequations}

Due to linearity, the integral equations and boundary conditions 
for $\Delta K_\ell$ and $\Delta L_\ell$ are of the same form as \eqref{eq:infmKie} and  
\eqref{eq:bctmp}, but with $K$ and $L$ replaced by $\Delta K_\ell$ and $\Delta L_\ell$. 	
Hence, the estimates \eqref{eq:DKL} can be proved by induction based on \eqref{eq:infmKie} and  
\eqref{eq:bctmp}. Firstly, the constant $M$ (and the initialization of $K_0, L_0$ to zero) 
guarantees that the estimates \eqref{eq:DKL} are 
satisfied for $\ell = 1$, and for any arbitrary $\ell > 1$ we have \eqref{eq:DKL} by the induction 
assumption. To show that \eqref{eq:DKL} 
then holds for $\ell \to \ell +1$, we insert the estimates \eqref{eq:DKL}, \eqref{eq:pbds}, and 
\eqref{eq:lyapb}, into the 
integral equations for $\Delta K_\ell$ and $\Delta L_\ell$. The following estimates are key to the 
induction step, and can be proved analogously to \cite[Lem. 6.2]{HuLDiM16}, for
all $i,j=1,\ldots,m$, $p=i,\ldots,m$, and any $\ell \geq 1$
\begin{subequations}
	\label{eq:sijpest}%
	\begin{align}
		\int\limits_0^{s_{i,p}^f(y)} \left(\hat{x}_{i,p}(s,y) -
		(1-\epsilon)\hat{\xi}_{i,p}(s,y) \right)^\ell ds
		& \leq \nonumber \\
		M_{\lambda}^{\epsilon}\frac{\left(x - (1-\epsilon)\xi
			\right)^{\ell+1}}{\ell+1}, &
\end{align}
\begin{align}
		\int\limits_0^{s_{i,j,p}^f} \left(\hat{x}_{i,j,p}(s) -
		(1-\epsilon)\hat{\xi}_{i,j,p}(s) \right)^\ell ds
		& \leq \nonumber \\
		M_{\mu}^{\epsilon}\frac{\left(x - (1-\epsilon)\xi
			\right)^{\ell+1}}{\ell+1},
		&                       
	\end{align}
\end{subequations}
where $(x,\xi) \in \mathcal{T}_i^p$ is the (arbitrary) initial point
of the respective characteristic curve on the $x\xi$-plane. 

Using \eqref{eq:sijpest} together with 
\eqref{eq:DKL} and the induction assumption, the induction step follows after similar 
computations as in \cite[Sect. VI.C]{AllKrs25}, albeit some 
additional care is required  due to splitting the domain into the $\mathcal{T}_i^p$ segments, as 
some boundary conditions depend on $\Delta K_\ell$ and $\Delta L_\ell$, which are unknown. 
However, as the boundary condition for $\Delta K_\ell$ on every $\mathcal{T}_i^i$ is known (due 
to \eqref{eq:bctmp1}), we 
can solve \eqref{eq:infmKie1} first on every $\mathcal{T}_i^i$, and then utilize the obtained values 
to solve \eqref{eq:infmKie1} on $\mathcal{T}_i^{i+1}$, and so on, up to  
$\mathcal{T}_i^m$.\footnote{Such a process is described in more detail in  \cite[Sect. 
3.2]{DiMArg18}.} As the domain $\mathcal{T}$ is split into at most $m$ segments, we need to 
solve \eqref{eq:infmKie1} at most $m$ times over the different segments to compute the next 
successive approximation. Hence, an adequate value for $M_{K,L}$ corresponding to the 
estimate for $\Delta K_\ell$ 
would be $m(M_\lambda^1+M_\sigma+ M_\theta)M_\lambda^\epsilon$, which gives the first term 
of~\eqref{eq:MM2}.

Deriving the estimate for $\Delta L_\ell$ follows similar steps, where we again need to traverse 
through the segments $\mathcal{T}_i^p$ (depending also on $j$) to have known boundary 
conditions for the integral equation \eqref{eq:infmKie2}. That is, for all $i\neq j$, we begin from 
$\mathcal{T}_i^i$ with known boundary condition on $\xi = x$ or $x = 1$, and then utilize the 
continuity conditions in \eqref{eq:bctmp2} up to $\mathcal{T}_i^m$ if $i > j$, or up to 
$\mathcal{T}_i^{j-1}$ if $i < j$. 
For $i 
\leq j$, the remaining segments are reached by beginning from $\mathcal{T}_i^m$ with the 
boundary condition on $\xi=0$, and then utilizing the continuity conditions up to 
$\mathcal{T}_i^{j}$. As in the case of $\Delta K_\ell$, this results in 
having to solve \eqref{eq:infmKie2} at most $m$ times, which results in the last term of 
\eqref{eq:MM2}. Moreover, the boundary condition on $\xi = 0$ depends on $\Delta K_\ell$, which 
can be dealt with using the estimate derived in the previous paragraph, which results in the 
remaining term $mM_Q^1(M_\lambda^1+M_\sigma+M_\theta)M_\lambda^\epsilon$ in 
\eqref{eq:MM2}. Thus, the estimate \eqref{eq:DKL} follows by induction. Hence the series 
\eqref{eq:saser} and, equivalently, the sequences of successive approximations converge in the 
stated sense \eqref{eq:saconv}.
\end{proof}
\end{lemma}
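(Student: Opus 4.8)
The plan is to recast the successive-approximation scheme as a telescoping series and prove the geometric-type bounds \eqref{eq:DKL} by induction on $\ell$, whence uniform convergence follows by comparison with the exponential series. Since $K_0$ and $L_0$ are initialized to zero, the partial sums are \eqref{eq:saser} with $\Delta K_\ell = K_\ell - K_{\ell-1}$, $\Delta L_\ell = L_\ell - L_{\ell-1}$, and by linearity of \eqref{eq:infmKie}--\eqref{eq:bctmp} each difference $(\Delta K_\ell, \Delta L_\ell)$ solves the same integral equations with $K,L$ replaced by $\Delta K_\ell, \Delta L_\ell$; the genuinely inhomogeneous boundary data (the $\theta_i/(\lambda+\mu_i)$ term of \eqref{eq:bctmp1}, the $\psi_{i,j}/(\mu_i-\mu_j)$ and $l^{(1)}_{i,j}$ terms of \eqref{eq:bctmp2}) enters only at $\ell = 1$, so for $\ell \ge 2$ the boundary data of $\Delta K_\ell,\Delta L_\ell$ on the "first" segments $\mathcal{T}_i^i$ vanishes and only the segment-to-segment continuity couplings \eqref{eq:infmkbcc} and the $\xi=0$ coupling \eqref{eq:infmkLkbcc} survive. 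The base case $\ell = 1$ is then secured by taking $M$ as in \eqref{eq:MM}, i.e. large enough that $\|\Delta K_1(x,\xi,\cdot)\|_{L^2}$ and $|\Delta L_1(x,\xi)|$, which equal the respective inhomogeneous boundary quantities, are bounded by $M$ on every $\mathcal{T}_i^p$; this is exactly where $M_B = \max\{M_B^1,M_B^2\}$ and the $\theta_j/(\lambda+\mu_i)$ maximum in \eqref{eq:MM}, \eqref{eq:MB} appear.

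For the induction step I would insert the hypothesis \eqref{eq:DKL} into the right-hand sides of \eqref{eq:infmKie1}, \eqref{eq:infmKie2}, bound every term via Cauchy--Schwarz together with the parameter bounds \eqref{eq:lyapb}, \eqref{eq:pbds}, and — the geometric core — the arc-length estimates \eqref{eq:sijpest}, which convert $\int_0^{s^f}(\hat x - (1-\epsilon)\hat\xi)^\ell\,ds$ into $M_\lambda^\epsilon (x-(1-\epsilon)\xi)^{\ell+1}/(\ell+1)$, and likewise with $M_\mu^\epsilon$. These estimates are proved as in \cite[Lem. 6.2]{HuLDiM16}: along the characteristic projections of \eqref{eq:infmkkic}, \eqref{eq:infmkLic}, which are continuous on each $\mathcal{P}_i^p$, $\mathcal{T}_i^p$ by Lemma~\ref{lem:ccc}, one parametrizes by $s$, uses $\tfrac{d}{ds}\hat x = -\mu_i(\hat x) < 0$, $\tfrac{d}{ds}\hat\xi = \lambda(\hat\xi,y) > 0$ (resp. $\tfrac{d}{ds}\hat x = \epsilon_{i,j}\mu_i$, $\tfrac{d}{ds}\hat\xi = \epsilon_{i,j}\mu_j$) and the choice of $\epsilon$ in \eqref{eq:eps} to bound $\tfrac{d}{ds}(\hat x - (1-\epsilon)\hat\xi)$ away from zero with the correct sign, so that the substitution to the variable $x - (1-\epsilon)\xi$ is legitimate and yields the stated power law with constants $M_\lambda^\epsilon$, $M_\mu^\epsilon$ from \eqref{eq:Mlm}. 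Continuity from Lemma~\ref{lem:ccc} is also what makes the $L^2$-in-$y$ norms of the iterates well defined and lets the $y$-integrals be estimated term by term. Combining with the induction hypothesis produces the factor $(x-(1-\epsilon)\xi)^{\ell+1}/(\ell+1)!$ in the next iterate, and collecting every coefficient that multiplies it gives $M_{K,L}$ as in \eqref{eq:MM2}.

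The main obstacle, and the source of the genuinely new bookkeeping relative to \cite{AllKrs25, HuLDiM16}, is the segmentation of $\mathcal{T}$ into the $\mathcal{T}_i^p$: the boundary data $B^1,B^2$ for one segment can be a restriction of the still-unknown $\Delta K_\ell,\Delta L_\ell$ on a neighbouring segment through \eqref{eq:infmkbcc}, so the induction cannot be closed segment-by-segment in arbitrary order. The remedy is to solve the integral equations in an order in which the boundary data is already controlled: for $\Delta K_\ell$ start on $\mathcal{T}_i^i$ (boundary term \eqref{eq:bctmp1} known), then pass successively to $\mathcal{T}_i^{i+1},\dots,\mathcal{T}_i^m$, at each step feeding in the bound just obtained on the previous segment; for $\Delta L_\ell$ traverse $\mathcal{T}_i^i\!\to\!\cdots\!\to\!\mathcal{T}_i^m$ when $i>j$, and $\mathcal{T}_i^i\!\to\!\cdots\!\to\!\mathcal{T}_i^{j-1}$ together with $\mathcal{T}_i^m\!\to\!\cdots\!\to\!\mathcal{T}_i^j$ when $i\le j$, absorbing in the latter case the extra dependence of the $\xi=0$ condition \eqref{eq:infmkLkbcc} on $\Delta K_\ell$ by means of the $\Delta K_\ell$ bound already in hand (this is the origin of the $M_Q^1$-weighted coupling term $mM_Q^1(M_\lambda^1+M_\sigma+M_\theta)M_\lambda^\epsilon$ in \eqref{eq:MM2}, via $M_Q^1$ from \eqref{eq:pbds}). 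Each such pass increments the effective growth constant by the corresponding per-pass coefficient, and since $\mathcal{T}$ splits into at most $m$ segments there are at most $m$ passes for each equation, which is precisely why $M_{K,L}$ in \eqref{eq:MM2} carries the factors $m$ and $(1+M_Q^1)$ and the sums $M_\lambda^1+M_\sigma+M_\theta$, $M_\mu^1+M_W+M_\psi$. Once \eqref{eq:DKL} is established, on every $\mathcal{T}_i^p$ we have $0\le x-(1-\epsilon)\xi\le 1$, so $\sum_{\ell}M\,M_{K,L}^\ell(x-(1-\epsilon)\xi)^\ell/\ell! \le M e^{M_{K,L}}$ and the series \eqref{eq:saser} converge absolutely and uniformly; each iterate is continuous on $\mathcal{T}_i^p$, $\mathcal{P}_i^p$ by Lemma~\ref{lem:ccc} and the regularity in Assumption~\ref{ass:infm}, so the limits are continuous, and passing to the limit in \eqref{eq:infmKie} identifies them with the solutions $K_i^p$, $L_{i,j}^p$, which gives \eqref{eq:saconv}.
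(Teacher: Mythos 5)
Your proposal is correct and follows essentially the same route as the paper's proof: the telescoping decomposition, the inductive bound \eqref{eq:DKL} with the constants $M$, $M_{K,L}$ as in \eqref{eq:MM}, the arc-length estimates \eqref{eq:sijpest} adapted from \cite[Lem.~6.2]{HuLDiM16}, and the segment-traversal ordering over the $\mathcal{T}_i^p$ to keep the boundary data controlled, including the $M_Q^1$-weighted coupling of the $\xi=0$ condition. The minor elaborations you add (the observation that the inhomogeneous boundary data only appears at $\ell=1$, a sketch of the change-of-variables behind \eqref{eq:sijpest}, and the final passage to the limit in the integral equation) are consistent with, and fill in details implicit in, the paper's argument.
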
 

\noindent {\itshape Proof of Theorem~\ref{thm:infmkwp}.} By Lemma~\ref{lem:sa}, the sequences 
of successive approximations for any $K_i^p$ and 
$L_{i,j}^p$ converge uniformly on $\mathcal{T}_i^p$ ($K_i^p$ in the
$L^2$ sense in $y$), which shows the existence (and well-posedness) of
the solutions $K_i^p, L_{i,j}^p$ to the kernel equations
\eqref{eq:infmkLk}--\eqref{eq:infmkbcc}.
To conclude the proof of Theorem \ref{thm:infmkwp},  
we note that any two $\mathcal{T}_i^p$ and $\mathcal{T}_i^s$ 
with $p\neq s$ may only intersect along a common boundary $\xi = \rho_j^r(x)$ for
$r=p$ or $r=s$ (if the segments are adjacent), which is a measure zero
subset of $\mathcal{T}$. Thus, as the kernels $L_{i,j}^p$ and $K_i^p$ are
continuous on each $(x,\xi) \in \mathcal{T}_i^p$, and the intersections of these segments 
comprise a finite number of sets of measure zero, the discontinuities of the
kernels $K_{i}$ and $L_{i,j}$ may only occur in sets of measure
zero.\footnote{In fact, the discontinuities may only occur in the
$L_{i,j}$ kernels for $1\leq i<j \leq  m$ along the curves $\xi =
\rho_i^j(x)$ due to \eqref{eq:infmkbcc}.} In particular, the kernels $K_i,L_{i,j}$ solving
\eqref{eq:infmk}--\eqref{eq:infmkabc} are uniquely determined by
$K_i^p, L_{i,j}^p$, almost everywhere on $\mathcal{T}$ and $\mathcal{P}$.

\section{Stabilization of Large-Scale $n+m$ Systems by Continuum
  Kernels} \label{sec:nmstab}

In this section, we construct a stabilizing control law for
large-scale $n+m$ systems based on the $\infty+m$ continuum
kernels. The core idea is to establish that, for large $n$, the exact control kernels constructed 
based on the $n+m$ system (see \ref{app:nm}) can be approximated to any desired accuracy by 
the continuum kernels computed on the basis of the $\infty+m$ system.

\subsection{Large-Scale $n+m$ Systems of Hyperbolic PDEs}

Consider a system of $n+m$ hyperbolic PDEs\footnote{We scale the sums involving the  $n$-part 
states $u^i, i=1,\ldots,n$ by $1/n$ in order 
to make the considerations in the limit $n\to\infty$ more natural, as discussed in \cite[Rem. 
2.2]{HumBek24arxiv}.}
\begin{subequations}
\label{eq:nmm}%
\begin{align}
\mathbf{u}_t(t,x) + \pmb{\Lambda}(x)\mathbf{u}_x(t,x)   & 
 	= \frac{1}{n}\pmb{\Sigma}(x)\mathbf{u}(t,x) +
	\mathbf{W}(x)\mathbf{v}(t,x), \\
	\mathbf{v}_t(t,x) - \mathbf{M}(x)\mathbf{v}_x(t,x)
	& = 
	\frac{1}{n}\pmb{\Theta}(x)\mathbf{u}(t,x) + \pmb{\Psi}(x)\mathbf{v}(t,x), 
	\end{align}
\end{subequations}
with boundary conditions
\begin{align}
\label{eq:nmmbc}%
\mathbf{u}(t,0) & = \mathbf{Q}\mathbf{v}(t,0), 
& & \mathbf{v}(t,1) = \mathbf{U}(t),
\end{align}
where $\pmb{\Lambda}(x) = 
\operatorname{diag}(\lambda_1(x),\ldots,\lambda_n(x)), 
\pmb{\Sigma}(x) = (\sigma_{i,j}(x))_{i,j=1}^n$, $\mathbf{W}(x) = (w_{i,j}(x))_{i=1,}^n{}_{j=1}^m,
\pmb{\Theta}(x) = 
(\theta_{j,i}(x))_{j=1,}^m{}_{i=1}^n, \mathbf{Q} = (q_{i,j})_{i=1,}^n{}_{j=1}^m, \mathbf{u} = 
\left(u^i\right)_{i=1}^n$, and $\mathbf{M}, 
\pmb{\Psi}$ correspond to the respective parameters in \eqref{eq:infm2}. As in 
\cite{AnfAamBook, HuLDiM16, HuLVaz19, AurDiM16}, we make the following assumptions
on the parameters.

\begin{assumption}
\label{ass:nm}
The transport velocities in \eqref{eq:nmm} are 
continuously differentiable with $\mu_m(x) > 0$ and $\lambda_i(x) > 0$ for all $x \in 
[0,1]$ and $i = 1,\ldots,n$, satisfying \eqref{eq:muass}.
The parameters $\pmb{\Sigma}, \mathbf{W}, \pmb{\Theta}, \pmb{\Psi}$ are
continuous with $\psi_{j,j} = 0$ for
all $j=1,\ldots,m$.
\end{assumption}

\begin{remark}
\label{rem:nmwp}
Under Assumption~\ref{ass:nm}, it can be shown by using the same arguments as in \cite[Prop. 
A.1]{HumBek24arxiv} that the system \eqref{eq:nmm}, \eqref{eq:nmmbc} is well-posed on the 
Hilbert space $E$.
\end{remark}

\subsection{Control Design and Stability Result}

Consider any continuous 
functions $\theta_j, W_j, Q_j, \lambda$, and $\sigma$ satisfying Assumption~\ref{ass:infm} with
\begin{subequations}
\label{eq:nmcap}%
\begin{align}
\theta_j(x,i/n) & = \theta_{j,i}(x), \\
W_j(x,i/n) & = w_{i,j}(x), \\
Q_j(i/n) & = q_{i,j}, \\
\lambda(x,i/n) & = \lambda_i(x), \\
\sigma(x,i/n,l/n) & = \sigma_{i,l}(x), \label{eq:nmcape}
\end{align}
\end{subequations}
for all $x \in [0,1]$, $i,l = 1,\ldots,n$ and $j = 1,\dots,m$.\footnote{As noted in \cite[Sect. 
IV.A]{HumBek24arxiv}, there are infinitely 
many ways to construct continuous (or even smooth in $y$) functions satisfying \eqref{eq:nmcap} 
and Assumption~\ref{ass:infm} based on parameters satisfying Assumption~\ref{ass:nm}.} It 
follows by 
Theorem~\ref{thm:infmkwp} that the corresponding continuum 
kernel equations \eqref{eq:infmkLk}--\eqref{eq:infmkbcc} have well-posed solution $K_i^p \in 
C(\mathcal{T}_i^p; L^2([0,1];\mathbb{R}))$, $L_{i,j}^p \in C(\mathcal{T}_i^p; \mathbb{R})$ for 
all $j=1,\ldots,m$ and $1 \leq i \leq p \leq m$. Thus, construct the following functions for all 
$(x,\xi) \in \mathcal{T}_i^p$ with $1 \leq i \leq p \leq m$,\footnote{If 
	$K_i^p(x,\xi,\cdot)$ is continuous, the mean-value sampling in \eqref{eq:kerapp1} can be 
	replaced with pointwise evaluation, e.g., at $y=1/n,\ldots,n$.\label{fn:kerapp}} 
\begin{subequations}
\label{eq:kerapp}
\begin{align}
\widetilde{k}^p_{i,l}(x,\xi) & = n\int\limits_{(l-1)/n}^{l/n}K_i^p(x,\xi, y)dy, \quad l = 1, \ldots 
n, \label{eq:kerapp1} \\
\widetilde{\ell}_{i,j}^p(x,\xi) & = L_{i,j}^p(x, \xi), \quad j = 1, \dots, m.
\end{align}
\end{subequations}
We have the following stabilization result.
\begin{theorem}
\label{thm:nmstab}
Consider \eqref{eq:nmm}, \eqref{eq:nmmbc} satisfying Assumption~\ref{ass:nm}. Let (continuum) 
parameters $\theta_j, W_j, Q_j$ for $j=1,\ldots,m$ and $\sigma,\lambda$ satisfy 
Assumption~\ref{ass:infm} and relations 
\eqref{eq:nmcap}. Define the feedback laws
\begin{align}
\label{eq:Un}
U^i(t) & =  \sum_{l=1}^n\sum_{p=i}^m
\int\limits_{\rho_i^{p+1}(1)}^{\rho_i^p(1)}\frac{1}{n}
\widetilde{k}^{p}_{i,l}(1,\xi)u^l(t,\xi)d\xi \nonumber \\
& \qquad + \sum_{j=1}^m\sum_{p=i}^m
\int\limits_{\rho_i^{p+1}(1)}^{\rho_i^p(1)} \widetilde{\ell}^{p}_{i,j}(1,\xi)v^j(t,\xi)d\xi,
\end{align}
for $i = 1,\ldots,m$, where $\rho_i^p$ are given in \eqref{eq:rho} and 
$\widetilde{k}_{i,l}^p,\widetilde{\ell}_{i,j}^p$ are given by 
\eqref{eq:kerapp} for all $j = 1,\ldots,m$ and
$1 \leq i \leq p \leq m$. The control law \eqref{eq:Un} exponentially 
stabilizes system \eqref{eq:nmm},~\eqref{eq:nmmbc} on $E$, provided that $n$ is sufficiently 
large.
\end{theorem}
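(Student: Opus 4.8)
The plan is to relate the feedback \eqref{eq:Un}, assembled from the sampled continuum kernels \eqref{eq:kerapp}, to the \emph{exact} backstepping feedback for \eqref{eq:nmm}, \eqref{eq:nmmbc}, and then to absorb the resulting mismatch into a Lyapunov argument as a small, state-proportional perturbation in the boundary condition of the exact target system. First I would invoke \ref{app:nm}: under Assumption~\ref{ass:nm}, \eqref{eq:nmm}, \eqref{eq:nmmbc} admits well-posed exact backstepping kernels $k^p_{i,l}$, $\ell^p_{i,j}$ (the finite-component counterparts of $K^p_i$, $L^p_{i,j}$), defined on the same segments $\mathcal{T}^p_i$ and solving the discrete analogues of \eqref{eq:infmkLk}--\eqref{eq:infmkbcc}, with the integrals in $\eta,y$ replaced by $\frac1n$-sums. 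The associated exact Volterra transformation --- the finite-component analogue of \eqref{eq:betai} with $(L^p_{i,j},K^p_i)$ replaced by $(\ell^p_{i,j},k^p_{i,l})$ --- maps \eqref{eq:nmm}, \eqref{eq:nmmbc} under the feedback \eqref{eq:Un} into a system of exactly the form of the finite-component target \eqref{eq:infmts}, \eqref{eq:infmtsbc}, except that the boundary condition $\pmb{\beta}(t,1)=\pmb{0}$ is replaced by $\pmb{\beta}(t,1)=\pmb{d}_n(t)$, where $d^i_n(t)$ is the value at $x=1$ of the $\xi$-integral of the state against the kernel discrepancies $\widetilde{k}^p_{i,l}-k^p_{i,l}$ and $\widetilde{\ell}^p_{i,j}-\ell^p_{i,j}$. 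Since $\widetilde{k}^p_{i,l}(x,\xi)=n\int_{(l-1)/n}^{l/n}K^p_i(x,\xi,y)\,dy$ while $k^p_{i,l}(x,\xi)=n\int_{(l-1)/n}^{l/n}\widehat{K}^p_i(x,\xi,y)\,dy$, with $\widehat{K}^p_i$ the piecewise-in-$y$ step interpolant equal to $k^p_{i,l}$ on $((l-1)/n,l/n]$, the Cauchy--Schwarz inequality in the $\frac1n$-scaled inner product \eqref{eq:eip} gives
\[
\|\pmb{d}_n(t)\|_{\mathbb{R}^m}\le C\,\varepsilon_n\,\|(\mathbf{u},\mathbf{v})(t)\|_E ,
\]
where $\varepsilon_n:=\max_{i,j,p}\big(\|K^p_i-\widehat{K}^p_i\|_{L^\infty(\mathcal{T}^p_i;L^2)}+\|L^p_{i,j}-\ell^p_{i,j}\|_{L^\infty(\mathcal{T}^p_i)}\big)$. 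Hence the heart of the proof is to show $\varepsilon_n\to0$ as $n\to\infty$.

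This kernel-approximation step is the main obstacle. For it I would argue that $\widehat{K}^p_i$, together with the step interpolant of the $\ell^p_{i,j}$, satisfies a system of integral equations along characteristics of exactly the structure \eqref{eq:infmKie}, \eqref{eq:bctmp}, but with the continuum data $(\lambda,\sigma,\theta_\ell,W_j,\psi_{\ell,j},Q_j)$ replaced by their sampled values via \eqref{eq:nmcap} --- i.e., $y$ and $\eta$ replaced by the corresponding sample points, and the $K$-part characteristic ODEs computed with the sampled $\lambda$. By Assumption~\ref{ass:infm}, the continuum data and their moduli of continuity are controlled uniformly in $n$, so the geometric/factorial bounds of Lemma~\ref{lem:sa} (the constants $M$, $M_{K,L}$ of \eqref{eq:MM} and the estimate \eqref{eq:DKL}) hold uniformly in $n$ for both the continuum successive approximations and their interpolated discrete analogues. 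Subtracting the two families of integral equations and inducting on the iteration index as in the proof of Lemma~\ref{lem:sa}, the difference of the $\ell$-th iterates is bounded by the same factorial estimate times the sampling mismatch of the data and the characteristics --- a quantity that tends to $0$ as $n\to\infty$, uniformly in $\ell$ up to the summable geometric factor; summing the uniformly convergent series \eqref{eq:saser} then yields $\varepsilon_n\to0$. The same reasoning shows that the inverse $n+m$ transformation (obtained analogously to Lemma~\ref{lem:invk}) has kernel bounds depending only on the uniform-in-$n$ forward kernel bounds, so that the exact transformation and its inverse are bounded operators on $E$ uniformly in $n$.

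It remains to run the Lyapunov functional \eqref{eq:lyap} --- in its finite-component form, with $\frac1n\sum_i$ in place of $\int_0^1\,dy$ --- on the perturbed target system. The computation of $\dot{V}(t)$ is identical to that in the proof of Theorem~\ref{thm:stab}, all the bounds in \eqref{eq:lyapb} being uniform in $n$ by \eqref{eq:nmcap} and the uniform kernel bounds (so that the nominal rate $\tfrac{c_V}{\max\{M_\mu,M_\lambda\}}$ is bounded below by a positive constant independent of $n$); the only new contribution is the boundary term at $x=1$, which equals $e^{\delta}\|\pmb{\beta}(t,1)\|^2_{\mathbf{D}}=e^{\delta}\|\pmb{d}_n(t)\|^2_{\mathbf{D}}$ and is therefore bounded by $C_1\varepsilon_n^2 V(t)$, using $\|\pmb{d}_n(t)\|_{\mathbb{R}^m}\le C\varepsilon_n\|(\mathbf{u},\mathbf{v})(t)\|_E\le C'\varepsilon_n V(t)^{1/2}$ (the last inequality by the uniform boundedness of the inverse transformation and the coercivity $V(t)\ge c_0\|(\pmb{\alpha},\pmb{\beta})(t)\|_E^2$). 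Thus $\dot{V}(t)\le-\big(\tfrac{c_V}{\max\{M_\mu,M_\lambda\}}-C_1\varepsilon_n^2\big)V(t)$, and choosing $n$ large enough that $C_1\varepsilon_n^2<\tfrac{c_V}{\max\{M_\mu,M_\lambda\}}$ gives $\dot{V}(t)\le-cV(t)$ for some $c>0$. Together with the uniform-in-$n$ boundedness of the exact $n+m$ transformation and its inverse (which preserves exponential decay), and the well-posedness of the closed loop on $E$ (which follows as in Remark~\ref{rem:nmwp} since the feedback \eqref{eq:Un} is a bounded perturbation of the generator, legitimizing the Lyapunov estimates for the weak solution), this establishes exponential stability of \eqref{eq:nmm}, \eqref{eq:nmmbc} under \eqref{eq:Un} on $E$ for all sufficiently large $n$.
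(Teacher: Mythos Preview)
Your proposal is correct and follows the same overall architecture as the paper: decompose \eqref{eq:Un} into the exact $n+m$ backstepping law plus a state-proportional perturbation with coefficient $\varepsilon_n$, show $\varepsilon_n\to0$, and close via the Lyapunov functional on the perturbed target system (the paper packages the last step by citing \cite[Prop.~A.2]{HumBek24arxiv}, with the Lyapunov computation you sketch appearing in the Remark after the proof). The one substantive difference is how you establish $\varepsilon_n\to0$. You propose to subtract the two families of characteristic integral equations and induct on the successive-approximation index, propagating the data and characteristic mismatch through the factorial bounds of Lemma~\ref{lem:sa}. The paper instead argues at a higher level: it lifts the exact $n+m$ kernels to step functions in $y$ (your $\widehat{K}^p_i$; this is Lemma~\ref{lem:pck}) and observes that these solve the \emph{continuum} kernel equations \eqref{eq:infmkLk}--\eqref{eq:infmkbcc} with step-function parameters $\lambda^n,\sigma^n,\theta_j^n,W_j^n,Q_j^n$; since Theorem~\ref{thm:infmkwp} gives continuous dependence of the kernels on the data, and the step-function parameters converge to the continuum ones (Lemma~\ref{lem:nmka}), kernel convergence follows without redoing the fixed-point estimates. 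Your direct route is legitimate but needs care where the $K$-characteristics themselves differ (they are driven by $\lambda$ versus the sampled $\lambda^n$); the paper's continuous-dependence shortcut avoids that bookkeeping entirely, at the cost of invoking a slightly more abstract well-posedness statement.
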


\subsection{Proof of Theorem~\ref{thm:nmstab}}

The proof of Theorem~\ref{thm:nmstab} is presented at the end of this
section based on the following lemmas.

\begin{lemma}[Transforming $n+m$ kernels from $E$ to $E_c$]
  \label{lem:pck}
Consider the $n+m$ kernel equations \eqref{eq:nmk}--\eqref{eq:nmkbcc} with parameters 
satisfying Assumption~\ref{ass:nm} and define the following functions for all $x \in [0,1]$, 
piecewise in $y$ for $i,l = 1,\ldots,n$ and $j = 1,\ldots,m$
\begin{subequations}
\label{eq:infmppc}%
\begin{align}
\lambda^n(x,y) & = \lambda_i(x), \quad y \in
(i-1)/n, i/n], \\
\sigma^n(x,y,\eta) & = \sigma_{i,l}(x), \quad y \in
((i-1)/n,i/n],\\
& \qquad \qquad \qquad \eta \in ((l-1)/n, l/n], \\
W_j^n(x,y) & = w_{i,j}(x), \quad y \in ((i-1)/n, i/n], \\
\theta_j^n(x,y) & = \theta_{j,i}(x), \quad y \in
((i-1)/n,i/n], \\
Q_j^n(y) & = q_{i,j}, \quad y \in ((i-1)/n, i/n].
\end{align}
\end{subequations}
Construct the following functions for $(x,\xi) \in \mathcal{T}_i^p$
with $1 \leq i \leq p \leq m$,  
piecewise in $y$ for $l = 1,\ldots,n$
\begin{equation}
	\label{eq:kn}
	K_{i,p}^n(x,\xi,y) = k_{i,l}^p(x,\xi), \quad y \in
	(l-1)/n, l/n], 
\end{equation}
where $k_{i,l}^p$ is the solution to \eqref{eq:nmkk} on $\mathcal{T}_i^p$. Then, 
$K_{i,p}^n$ together with $\ell_{i,j}^p$ for $j=1,\ldots,m$ (the solution to \eqref{eq:nmkl} 
on $\mathcal{T}_i^p$) satisfy the continuum kernel equations \eqref{eq:infmkLk}, 
\eqref{eq:infmkLkbc}--\eqref{eq:infmkbcc} with parameters defined in 
\eqref{eq:infmppc} and the original $\mu_j, \psi_{i,j}, l_{i,j}$, for $i,j = 1,\ldots,m$.
\end{lemma}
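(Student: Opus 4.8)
The plan is to verify, by direct substitution, that the piecewise-constant-in-$y$ objects $K_{i,p}^n$ and $\ell_{i,j}^p$ constructed from the $n+m$ kernels $k_{i,l}^p, \ell_{i,j}^p$ solve the continuum kernel system \eqref{eq:infmkLk}, \eqref{eq:infmkLkbc}--\eqref{eq:infmkbcc} when the continuum data are taken to be the piecewise-constant interpolants \eqref{eq:infmppc} together with the unchanged finite parameters $\mu_j,\psi_{i,j},l_{i,j}^{(1)}$. First I would recall from \ref{app:nm} the exact form of the $n+m$ kernel equations \eqref{eq:nmkk}, \eqref{eq:nmkl} and their boundary/continuity conditions \eqref{eq:nmkbcc}, noting that they live on the \emph{same} segments $\mathcal{T}_i^p$ as the continuum equations (the segmentation is dictated solely by the $\mu_j$, which are identical in both problems, so the characteristic curves $\rho_i^p$ coincide). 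The key observation is that the sum $\frac{1}{n}\sum_{l=1}^n$ appearing throughout the $n+m$ kernel PDEs is exactly a midpoint/Riemann realization of the integral $\int_0^1(\cdot)\,dy$ against a function that is piecewise constant on the intervals $((l-1)/n,l/n]$; hence whenever I plug $K_{i,p}^n(x,\xi,y)=k_{i,l}^p(x,\xi)$ for $y\in((l-1)/n,l/n]$ and the piecewise-constant $\sigma^n, W_j^n$ into an integral term of \eqref{eq:infmkLk}, the integral collapses to the corresponding $\frac1n$-scaled finite sum, term by term.

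The concrete steps I would carry out are: (i) fix $1\le i\le p\le m$ and $y\in((l-1)/n,l/n]$; since $K_{i,p}^n(\cdot,\cdot,y)=k_{i,l}^p(\cdot,\cdot)$ is independent of $y$ on this interval, the $\partial_x$ and $\partial_\xi$ terms and the $\lambda_\xi^n$-type zeroth-order term on the left of \eqref{eq:infmkLk2} reduce verbatim to those of the $n+m$ equation for $k_{i,l}^p$, because $\lambda^n(\xi,y)=\lambda_l(\xi)$ and $\lambda^n_\xi(\xi,y)=\lambda_l'(\xi)$ there; (ii) the term $\sum_{\ell=1}^m L_{i,\ell}^p\theta_\ell^n$ matches because $\theta_\ell^n(\xi,y)=\theta_{\ell,l}(\xi)$ on this interval and $L_{i,\ell}^p=\ell_{i,\ell}^p$ is literally the same object; (iii) the term $\int_0^1 K_i^p(x,\xi,\eta)\sigma^n(\xi,\eta,y)\,d\eta$ equals $\sum_{l'=1}^m\int_{(l'-1)/n}^{l'/n}k_{i,l'}^p(x,\xi)\sigma_{l',l}(\xi)\,d\eta = \frac1n\sum_{l'=1}^n k_{i,l'}^p(x,\xi)\sigma_{l',l}(\xi)$, which is exactly the coupling term in the $n+m$ equation for $k_{i,l}^p$. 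An entirely parallel (in fact simpler, since no $y$-variable enters) computation handles \eqref{eq:infmkLk1} for $L_{i,j}^p=\ell_{i,j}^p$, with $\int_0^1 K_i^p(x,\xi,y)W_j^n(\xi,y)\,dy$ collapsing to $\frac1n\sum_{l=1}^n k_{i,l}^p(x,\xi)w_{l,j}(\xi)$. Then (iv) I would check the three boundary conditions \eqref{eq:infmkLkbca}--\eqref{eq:infmkLkbcc}: the first involves only $\mu_i,\mu_j,\psi_{i,j}$, which are unchanged, so it holds trivially; the second, $K_i^i(x,x,y)=-\theta_i^n(x,y)/(\lambda^n(x,y)+\mu_i(x))$, reduces on each interval to $k_{i,l}^i(x,x)=-\theta_{i,l}(x)/(\lambda_l(x)+\mu_i(x))$, which is the $n+m$ boundary condition \eqref{eq:nmkbcc}; the third, $L_{i,j}^m(x,0)=\frac1{\mu_j(0)}\int_0^1 K_i^m(x,0,y)\lambda^n(0,y)Q_j^n(y)\,dy$, again collapses to $\frac1{n\mu_j(0)}\sum_{l=1}^n k_{i,l}^m(x,0)\lambda_l(0)q_{l,j}$, matching the corresponding $n+m$ condition. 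Finally (v) the artificial conditions \eqref{eq:abct} and the continuity conditions \eqref{eq:infmkbcc} transfer immediately: $\ell_{i,j}^p$ is the same object in both settings, and $K_{i,p}^n(x,\rho_i^p(x),y)=k_{i,l}^p(x,\rho_i^p(x))=k_{i,l}^{p-1}(x,\rho_i^p(x))=K_{i,p-1}^n(x,\rho_i^p(x),y)$ by the $n+m$ continuity condition.

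The main obstacle — really the only thing requiring care rather than routine bookkeeping — is making sure the \emph{half-open} interval convention in \eqref{eq:infmppc}, \eqref{eq:kn} is handled consistently so that the piecewise-constant functions are genuinely well-defined elements of the relevant $L^2$/$L^\infty$ spaces and so that every Riemann-type sum $\frac1n\sum_{l=1}^n$ in the $n+m$ equations is reproduced exactly (not merely approximately) by the corresponding integral; since the integrands are piecewise constant, these are exact identities rather than approximations, but one must be careful that the grid points $l/n$ (a measure-zero set) do not spoil the almost-everywhere statements, which is precisely why the lemma is phrased "piecewise in $y$" and why \eqref{eq:infmkLk} is understood in $L^\infty(\mathcal{T};E_c)$. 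A secondary point is to confirm that no spurious coupling is introduced: the continuum equation for $K_i^p$ couples to $K_i^p(\cdot,\cdot,\eta)$ for \emph{all} $\eta$, but because the $n+m$ equation for $k_{i,l}^p$ couples (via $\pmb\Sigma$) to all $k_{i,l'}^p$, the two coupling structures are in bijection under the construction \eqref{eq:kn}, so nothing is lost or gained. Once these checks are assembled, every equation and boundary/continuity condition of the continuum system \eqref{eq:infmkLk}, \eqref{eq:infmkLkbc}--\eqref{eq:infmkbcc} is satisfied by $(K_{i,p}^n,\ell_{i,j}^p)$, which is the claim.
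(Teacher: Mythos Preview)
Your proposal is correct and follows essentially the same approach as the paper: both proofs verify by direct substitution that the piecewise-constant-in-$y$ construction satisfies the continuum kernel equations, with the key identity being that $\int_0^1(\cdot)\,dy$ against piecewise-constant integrands collapses exactly to the scaled sum $\frac{1}{n}\sum_{l=1}^n$. The paper packages this same computation via an isometry $\mathcal{F}=\operatorname{diag}(\mathcal{F}_n,I_m)$ mapping $\mathbf{e}_l\mapsto\chi_{((l-1)/n,l/n]}$ (with $\mathcal{F}^*\mathcal{F}=I_{n+m}$), applied from the left to \eqref{eq:nmk}, whereas you carry out the verification componentwise and interval-by-interval; the operator formulation has the side benefit that $\mathcal{F}_n$ and its adjoint are reused downstream (e.g., in Lemma~\ref{lem:Und}), but the mathematical content of the argument is the same.
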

\begin{proof}
We define the linear transform $\mathcal{F} =
\operatorname{diag}(\mathcal{F}_{n}, I_m)$ where 
$\mathcal{F}_n\mathbf{e}_{i} = \chi_{((i-1)/n,i/n]}$ with
$\chi_{((i-1)/n,i/n]}$ being the indicator function of the
interval $((i-1)/n,i/n]$ and
$\left(\mathbf{e}_{i}\right)_{i=1}^n$ being the Euclidean basis
of $\mathbb{R}^n$. Thus, the transform maps any $\mathbf{b}
=\left(b_i\right)_{i=1}^{n+m} \in \mathbb{R}^{n+m}$ into
$L^2\left([0,1];\mathbb{R}\right)\times \mathbb{R}^m$ as
\begin{equation}
\mathcal{F}\mathbf{b} = \begin{bmatrix}
\sum_{i=1}^n b_i \chi_{((i-1)/n,i/n]} \\ (b_j)_{j=n+1}^{n+m}.
\end{bmatrix}.
\end{equation}
For any $g \in L^2([0,1];\mathbb{R})$,
the adjoint $\mathcal{F}_n^{*}$ satisfies
\begin{equation}
\left\langle \mathcal{F}_n\mathbf{e}_\ell, g
\right\rangle_{L^2([0,1];\mathbb{R})}
= \int\limits_{(\ell-1)/n}^{\ell/n}g(y)dy =
\frac{1}{n}\left\langle \mathbf{e}_{\ell}, \mathcal{F}_n^{*}g \right\rangle_{\mathbb{R}^{n}},
\end{equation}
that is, $\mathcal{F}_n^{*}$ is given by
\begin{equation}
\label{eq:Fns}
\mathcal{F}_n^{*}g = \left( n\int\limits_{(i-1)/n}^{i/n}g(y)dy \right)_{i=1}^n,
\end{equation}
where each component is the mean value of $g$ over the interval $[(i-1)/n,i/n]$.
Thus,  $\mathcal{F}$ has the adjoint
$\mathcal{F}^{*} = \operatorname{diag}\left(\mathcal{F}_n^{*},I_m\right)$, which
additionally satisfies $\mathcal{F}^{*}\mathcal{F} = I_{n+m}$, i.e., $\mathcal{F}$ (and 
$\mathcal{F}_n$) are isometries, and thus, norm preserving from their domain to their 
co-domain. Now, the claim follows similarly to \cite[Lem. 4.2]{HumBek24arxiv} after applying 
$\mathcal{F}$ to \eqref{eq:nmk} from the left (pointwise in $(x,\xi) \in \mathcal{T}_i^p$ for every 
$1\leq i \leq p \leq m$), using the fact that
$\mathcal{F}^{*}\mathcal{F} = I_{n+m}$ and the definitions \eqref{eq:infmppc}, 
\eqref{eq:kn}. Moreover, one needs to verify that the boundary conditions 
\eqref{eq:infmkLkbc}--\eqref{eq:infmkbcc} are satisfied, which is trivially true for 
\eqref{eq:infmkLkbca}, \eqref{eq:abct}, \eqref{eq:infmkLkcbca} as the $\mathcal{F}$ 
transform is identity in the second component, and the remaining \eqref{eq:infmkLkbcb}, 
\eqref{eq:infmkLkbcc}, \eqref{eq:infmkLkcbcb} are verified utilizing the fact that 
$K_{i,p}^n$ are piecewise constant in $y$.
\end{proof}

\begin{lemma}[Approximating $n+m$ kernels by continuum]
\label{lem:nmka}
Consider the solutions $K^n_{i,p}, \ell^p_{i,j}$ for $j = 1,\ldots,m, 1 \leq i \leq p \leq m$ to the 
kernel equations \eqref{eq:infmkLk},
\eqref{eq:infmkLkbc}--\eqref{eq:infmkbcc} for any $n\in\mathbb{N}$ with parameters 
$\lambda^n, \mu_j,
\sigma^n, W^n_j, \theta_j^n, Q_j^n, \psi_{i,j}, l_{i,j}$, for $i,j =
1,\ldots,m$, from Lemma~\ref{lem:pck}. There exist continuum parameters $\lambda,
\sigma, W_j, \theta_j, Q_j$ constructed such that they
satisfy \eqref{eq:nmcap} and together with the original parameters $\mu_j,
\psi_{i,j}$ they satisfy Assumption~\ref{ass:infm}. For any such
parameters, the solution $K_i^p, L_{i,j}^p$ for $j = 1,\ldots,m, 1 \leq i \leq p \leq m$ to the 
continuum kernel equations \eqref{eq:infmkLk},
\eqref{eq:infmkLkbc}--\eqref{eq:infmkbcc}, where $l_{i,j}^{(1)} =
l_{i,j}$, exists and satisfies the following implications. For any
$\delta_1 > 0$, there exists an $n_{\delta_1} \in \mathbb{N}$ such
that for all $n \geq n_{\delta_1}$ we have 
\begin{subequations}
  \label{eq:nmka}%
\begin{align}
	\max_{1\leq i \leq p \leq m}\max_{(x,\xi) \in \mathcal{T}_i^p}\|K_i^p(x,\xi,\cdot) -
  K^n_{i,p}(x,\xi,\cdot)\|_{L^2} & \leq \delta_1,\label{eq:nmka1} \\
 	 \max_{j= \{ 1,\ldots,m\}} \max_{1\leq i \leq p \leq m}\max_{(x,\xi) \in 
 	\mathcal{T}_i^p}|L_{i,j}^p(x,\xi) -
  \ell_{i,j}^p(x,\xi)| & \leq \delta_1. \label{eq:nmka2}
\end{align}
\end{subequations}
\begin{proof}
Following the same steps as in the proof of
\cite[Lem. 4.3]{HumBek24arxiv}, we first note that the the kernel
equations \eqref{eq:infmkLk},
\eqref{eq:infmkLkbc}--\eqref{eq:infmkbcc} have well-posed solutions
for the two sets of parameters considered in the statement of the
lemma. Thus, $K_{i,p}^n, \ell_{i,j}^p$ depend continuously on
$\lambda^n, \mu_j, \sigma^n, W_j^n, \theta_j^n, Q_j^n$, $\psi_{i,j},$
and $l_{i,j}$ by \cite[Thm A.1]{HuLVaz19} and Lemma~\ref{lem:pck}, and $K_i^p, L_{i,j}^p$ 
depend continuously on $\lambda,
\mu_j, \sigma, W_j, \theta_j$, $Q_j, \psi_{i,j}$, and $l_{i,j}$ by Theorem~\ref{thm:infmkwp}, for $i,j
= 1,\ldots, m$ with $i \leq p \leq m$. As the parameters $\mu_j,
\psi_{i,j}, l_{i,j}$ coincide, the claim follows after showing that
the parameters $\lambda^n, \sigma^n, W_j^n, \theta_j^n, Q_j^n$
converge to $\lambda, \sigma, W_j, \theta_j,Q_j$ (for all
$j=1,\ldots,m$) as $n\to \infty$. This convergence is established under \eqref{eq:infmppc} by 
\cite[Sect. 1.3.5]{TaoBook11} in the sense
that, for any $\varepsilon_1 > 0$, the following estimates are
satisfied for any sufficiently large $n$
\begin{subequations}
\label{eq:sa}%
\begin{align}
\max_{x\in[0,1]}\|\lambda(x,\cdot) -
  \lambda^n(x,\cdot)\|_{L^2([0,1];\mathbb{R})}
  & \leq 
\varepsilon_1, \\
\max_{x\in[0,1]}\|\sigma(x,\cdot) - 
\sigma^n(x,\cdot)\|_{L^2([0,1]^2;\mathbb{R})}
& \leq 	\varepsilon_1, \\
\max_{j=1,\ldots,m}\max_{x\in[0,1]}\|\theta_j(x,\cdot) -
  \theta_j^n(x,\cdot)\|_{L^2([0,1];\mathbb{R})}
  & \leq \varepsilon_1, \\
  \max_{j=1,\ldots,m}\max_{x\in[0,1]}\|W_j(x,\cdot) - W^n_j(x,\cdot)\|_{L^2([0,1];\mathbb{R})}
  & \leq \varepsilon_1, \\
  \max_{j=1,\ldots,m}\|Q_j-Q_j^n\|_{L^2([0,1];\mathbb{R})}
  & \leq \varepsilon_1.
\end{align}%
\end{subequations}
\end{proof}
\end{lemma}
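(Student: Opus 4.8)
The plan is a \emph{continuous-dependence-plus-parameter-convergence} argument, built on the structural observation that, after the piecewise-constant realization of Lemma~\ref{lem:pck}, the (realized) $n+m$ kernels $K_{i,p}^n,\ell_{i,j}^p$ and the continuum kernels $K_i^p,L_{i,j}^p$ solve \emph{the same} family of kernel equations \eqref{eq:infmkLk}, \eqref{eq:infmkLkbc}--\eqref{eq:infmkbcc} on the same segments $\mathcal{T}_i^p,\mathcal{P}_i^p$ --- the former with data $(\lambda^n,\sigma^n,W_j^n,\theta_j^n,Q_j^n)$, the latter with $(\lambda,\sigma,W_j,\theta_j,Q_j)$, while $\mu_j$, $\psi_{i,j}$, and the artificial terminal data $l_{i,j}$ are common to both. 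Hence the difference of the two solutions is driven solely by the difference of these data, and it suffices to (i) quantify how the kernels depend on the data and (ii) show $\lambda^n\to\lambda$, $\sigma^n\to\sigma$, $W_j^n\to W_j$, $\theta_j^n\to\theta_j$, $Q_j^n\to Q_j$ in the relevant norms as $n\to\infty$.

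I would first fix an admissible continuum data set. Interpolating the finitely many $n$-system coefficients in the ensemble variable --- e.g.\ piecewise-linearly, or by a smooth construction, keeping the interpolants jointly continuous so that \eqref{eq:nmcap} is meaningful --- produces $\lambda\in C^1([0,1]^2)$, $\sigma\in C([0,1];L^2([0,1]^2))$, $W_j,\theta_j\in C([0,1];L^2)$, $Q_j\in L^2$ obeying \eqref{eq:nmcap}; together with the untouched $\mu_j,\psi_{i,j}$ they satisfy Assumption~\ref{ass:infm}, since positivity of $\mu_m$ and the ordering \eqref{eq:muass} are unaffected and the interpolant of positive data stays bounded away from $0$. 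By Theorem~\ref{thm:infmkwp} this gives well-posed $K_i^p\in C(\mathcal{T}_i^p;L^2)$, $L_{i,j}^p\in C(\mathcal{T}_i^p;\mathbb{R})$, while $K_{i,p}^n,\ell_{i,j}^p$ exist for every $n$ by Lemma~\ref{lem:pck} together with well-posedness of the underlying $n+m$ kernel equations.

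The continuous dependence is the crux. For the $n+m$ side, continuous dependence of $k_{i,l}^p,\ell_{i,j}^p$ on the finitely many matrix coefficients is \cite[Thm~A.1]{HuLVaz19}; through the isometry $\mathcal{F}$ of Lemma~\ref{lem:pck} (equivalently, the $1/n$-weighted norm, in which $\|K_{i,p}^n(x,\xi,\cdot)\|_{L^2}^2=\tfrac1n\sum_l|k_{i,l}^p(x,\xi)|^2$) this transfers to continuous dependence of $K_{i,p}^n,\ell_{i,j}^p$ on $(\lambda^n,\sigma^n,W_j^n,\theta_j^n,Q_j^n)$ in the topology of uniform convergence in $(x,\xi)\in\mathcal{T}_i^p$ with $L^2$ in $y$. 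For the continuum side, I would rerun the difference argument underlying Lemma~\ref{lem:sa}: by linearity, the difference of the two continuum solutions obtained from two admissible data sets satisfies the integral system \eqref{eq:infmKie}, \eqref{eq:bctmp} with forcing linear in the data differences, so the successive-approximations estimate \eqref{eq:DKL}--\eqref{eq:MM} is reproduced with the constant $M$ replaced by one proportional to the data difference; summing the resulting series gives a kernel difference $\le C\varepsilon_1$ whenever all data differences are $\le\varepsilon_1$, uniformly on every $\mathcal{T}_i^p$, with the inter-segment continuity conditions \eqref{eq:infmkbcc} propagated through the at-most-$m$ coupled segments exactly as in that proof. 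A genuine subtlety is that the $L$-characteristics \eqref{eq:infmkLic} involve only the common $\mu_j$ and hence coincide for the two data sets, whereas the $K$-characteristics \eqref{eq:infmkkic} move with $\lambda$; for the latter one additionally uses continuous dependence of the characteristic ODEs on the velocity field (cf.\ \cite{TesBook}), fed by the uniform smallness of $\lambda^n-\lambda$ from the last step. Following \cite[Lem.~4.3]{HumBek24arxiv}, one must also account for the fact that the $n$-data are only piecewise constant in $y$ and hence not in the $C^1$ class of Assumption~\ref{ass:infm}; this is harmless, since for each fixed $y$ the characteristic projections remain continuous in $(x,\xi)$ with jumps in $y$ confined to the null set $\{l/n\}$, so the successive-approximations estimates still close in $L^2$.

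Finally, the parameter convergence is elementary. Since $\lambda$ is uniformly continuous on the compact cube and $\lambda^n(x,\cdot)$ is its piecewise-constant sampling on the grid $\{i/n\}$, one has $\sup_x\|\lambda(x,\cdot)-\lambda^n(x,\cdot)\|_{L^\infty}\to0$ and a fortiori $\sup_x\|\lambda(x,\cdot)-\lambda^n(x,\cdot)\|_{L^2}\to0$; the same step-function approximation (\cite[Sect.~1.3.5]{TaoBook11}) yields $\sup_x\|\sigma(x,\cdot)-\sigma^n(x,\cdot)\|_{L^2([0,1]^2)}\to0$ and likewise for $W_j,\theta_j,Q_j$. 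Combining: given $\delta_1>0$, pick $\varepsilon_1>0$ from the constant $C$ of the continuous-dependence step so that data differences $\le\varepsilon_1$ force $\|K_i^p-K_{i,p}^n\|_{L^2}\le\delta_1$ and $|L_{i,j}^p-\ell_{i,j}^p|\le\delta_1$ uniformly over all $1\le i\le p\le m$ and $j$, and then choose $n_{\delta_1}$ so that every parameter difference above is $\le\varepsilon_1$ for $n\ge n_{\delta_1}$; this establishes \eqref{eq:nmka}. The main obstacle, as flagged, is making the continuous-dependence bound quantitative in exactly this sup-in-$(x,\xi)$, $L^2$-in-$y$ norm, uniformly over the $\le m$ coupled segments, while accommodating the moving $K$-characteristics and the reduced regularity of the piecewise-constant $n$-data.
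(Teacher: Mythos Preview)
Your proposal is correct and follows essentially the same architecture as the paper's proof: continuous dependence of the kernels on the data (invoking \cite[Thm~A.1]{HuLVaz19} through Lemma~\ref{lem:pck} on the $n+m$ side and Theorem~\ref{thm:infmkwp} on the continuum side), combined with convergence of the piecewise-constant data $\lambda^n,\sigma^n,W_j^n,\theta_j^n,Q_j^n$ to $\lambda,\sigma,W_j,\theta_j,Q_j$ via \cite[Sect.~1.3.5]{TaoBook11}. The paper's proof is considerably terser --- it simply cites continuous dependence and \cite[Lem.~4.3]{HumBek24arxiv} rather than spelling out the rerun of the successive-approximations bound or the characteristic-dependence subtleties you flag --- but the substance is the same.
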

\begin{remark}
As noted in \cite[Rem. 4.4]{HumBek24arxiv}, the step functions
$\lambda^n, \sigma^n, \theta_j^n, W_j^n, Q_j^n$ could be constructed
in various, alternative ways to \eqref{eq:infmppc}, but the result of
Lemma~\ref{lem:nmka} remains valid as long as the step functions
approximate the continuous functions $\lambda,\sigma,\theta_j,W_j,Q_j$ (satisfying 
\eqref{eq:nmcap}) to arbitrary accuracy as $n\to \infty$ as in \eqref{eq:sa}.
\end{remark}

\begin{lemma}[Representation of \eqref{eq:Un} as perturbation of exact controller]
  \label{lem:Und}
  The control law \eqref{eq:Un} can be written as
\begin{align}
\label{eq:Und}
    U^i(t)
    & =  \sum_{l=1}^n\sum_{p=i}^m
\int\limits_{\rho_i^{p+1}(1)}^{\rho_i^p(1)}\frac{1}{n}
k^{p}_{i,l}(1,\xi)u^l(t,\xi)d\xi \nonumber \\
& \qquad + \sum_{j=1}^m\sum_{p=i}^m
\int\limits_{\rho_i^{p+1}(1)}^{\rho_i^p(1)}
\ell^{p}_{i,j}(1,\xi)v^j(t,\xi)d\xi
 \nonumber \\
   &  + \sum_{l=1}^n\sum_{p=i}^m
\int\limits_{\rho_i^{p+1}(1)}^{\rho_i^p(1)}\frac{1}{n}
\Delta k^{p}_{i,l}(1,\xi)u^l(t,\xi)d\xi \nonumber
 \end{align}
\begin{align}
& \qquad + \sum_{j=1}^m\sum_{p=i}^m
\int\limits_{\rho_i^{p+1}(1)}^{\rho_i^p(1)} \Delta \ell^{p}_{i,j}(1,\xi)v^j(t,\xi)d\xi,
\end{align}
where $k^p_{i,l}, \ell^p_{i,j}$ is the (exact) solution to the $n+m$ kernel equations 
\eqref{eq:nmk}--\eqref{eq:nmkbcc} and $\Delta k_{i,l}^p, \Delta 
\ell_{i,j}^p$ are the approximation error terms that become arbitrarily small, uniformly in $\xi \in 
[\rho_i^{p+1}(1), \rho_i^{p}(1)]$, for all 
$l = 1,\ldots,n$ and $i,j = 1,\ldots,m$ with $i \leq p \leq m$, when $n$ is sufficiently large.
\begin{proof}
Transform the functions $\widetilde{k}_{i,l}^p$ from \eqref{eq:kerapp} into step functions in $y$ 
as 
\begin{equation}
	\label{eq:ktapp}
	\widetilde{K}_{i,p}^n(x,\xi,y) = \widetilde{k}_{i,l}^p(x,\xi), \quad y \in ((l-1)/n, l/n],
\end{equation}
for all $(x,\xi) \in \mathcal{T}_i^p$ and $1 \leq i \leq p \leq m$, piecewise for $y$ for $l = 
1,2,\ldots,n$. By \eqref{eq:kerapp} and \eqref{eq:ktapp}, we have $	
\widetilde{K}_{i,p}^n(x,\xi,\cdot) = \mathcal{F}_n\mathcal{F}_n^*K_{i}^p(x,\xi,\cdot)$, which is the 
mean-value approximation of $K_{i}^p(x,\xi,\cdot)$ for all $(x,\xi) \in \mathcal{T}_i^p$ and $1 
\leq i \leq p 
\leq m$. By \cite[Sect. 1.6]{TaoBook11}, the mean-value approximation becomes arbitrarily 
accurate for sufficiently large $n$, i.e., for any $\varepsilon_2 > 0$ 
there exists some $n_{\varepsilon_2} \in \mathbb{N}$ such that
\begin{equation}
	\label{eq:kmv}
	\max_{1\leq i \leq p \leq m}\max_{(x,\xi)\in \mathcal{T}_i^p}\|K_i^p(x,\xi,\cdot) - 
	\widetilde{K}_{i,p}^n(x,\xi,\cdot)\|_{L^2} \leq \varepsilon_2,
\end{equation}
for any $n \geq n_{\varepsilon_2}$. Combining \eqref{eq:kmv} with the estimate \eqref{eq:nmka1}
and using the triangle inequality, we have for any $n \geq \max \left\{n_{\delta_1}, 
n_{\varepsilon_2}\right\}$
\begin{align}
	\label{eq:ktkn}
		\max_{1\leq i \leq p \leq m}\max_{(x,\xi)\in \mathcal{T}_i^p}\|K_{i,p}^n(x,\xi,\cdot) - 
	\widetilde{K}_{i,p}^n(x,\xi,\cdot)\|_{L^2}
	& \leq  \nonumber \\
		\max_{1\leq i \leq p \leq m}\max_{(x,\xi) \in \mathcal{T}_i^p}\|K_i^p(x,\xi,\cdot) -
	K^n_{i,p}(x,\xi,\cdot)\|_{L^2} & \nonumber \\
	+ 	\max_{1\leq i \leq p \leq m}\max_{(x,\xi)\in \mathcal{T}_i^p}\|K_i^p(x,\xi,\cdot) - 
	\widetilde{K}_{i,p}^n(x,\xi,\cdot)\|_{L^2} & \leq \nonumber \\
	\delta_1 + \varepsilon_2,
\end{align}
where both $\delta_1$ and $\varepsilon_2$ can be made arbitrarily small by increasing $n$, 
which follows from 
Lemma~\ref{lem:nmka} and \eqref{eq:ktapp}, respectively. As the estimate is uniform on every 
$\mathcal{T}_i^p$, it particularly applies on $x=1$.

Moreover, the step functions $\widetilde{K}_{i,p}^n$ and $K_{i,p}^n$ constructed in 
\eqref{eq:ktapp} and \eqref{eq:kn}, respectively, are obtained through applying the transform 
$\mathcal{F}_n$, introduced in the proof of Lemma~\ref{lem:pck}, to 
$\left(\widetilde{k}^{p}_{i,l}\right)_{l=1}^n$ and $\left(k^p_{i,l}\right)_{l=1}^n$, respectively, for all 
$i=1,\ldots,m$. Thus, as $\mathcal{F}_n$ is an isometry, the estimate \eqref{eq:ktkn} also holds 
for these functions, i.e., 
\begin{align}
	\label{eq:kpd}
	\resizebox{.95\columnwidth}{!}{$\displaystyle
\max_{1\leq i \leq p \leq m}\max_{(x,\xi)\in 
\mathcal{T}_i^p}\frac{1}{\sqrt{n}}\left\|\left(k_{i,l}^p(x,\xi)\right)_{l=1}^n - 
\left(\widetilde{k}^{p}_{i,l}(x,\xi)\right)_{l=1}^n
\right\|_{\mathbb{R}^n}$} & \leq \nonumber \\
\delta_1 +  \varepsilon_2. &
\end{align}
In addition, from \eqref{eq:nmka2} in Lemma~\ref{lem:nmka} we already have for $n \geq 
n_{\delta_1}$
\begin{equation}
		\label{eq:lpd}
		\resizebox{\columnwidth}{!}{$\displaystyle \max_{1\leq i \leq p \leq m} \max_{(x,\xi) \in 
		\mathcal{T}_i^p}\left\|\left(\ell_{i,j}^p(x,\xi)\right)_{j=1}^m -
	\left(\widetilde{\ell}^p_{i,j}(x,\xi)\right)_{j=1}^m\right\|_{\mathbb{R}^m} \leq \sqrt{m}\delta_1.$}
\end{equation}
Now, setting $\Delta k_{i,l}^p = \widetilde{k}_{i,l}^p - k_{i,l}^p$ and $\Delta \ell_{i,j}^p = 
\widetilde{\ell}_{i,j}^p - \ell_{i,j}^p$, we have written \eqref{eq:Un} as \eqref{eq:Und}, where the 
error term can be estimated using \eqref{eq:kpd}, \eqref{eq:lpd}, triangle inequality, and 
Cauchy-Schwartz inequality, for all $i=1,\ldots,m$, as
\begin{align}
	\label{eq:Udest}
	\sum_{l=1}^n\sum_{p=i}^m
	\int\limits_{\rho_i^{p+1}(1)}^{\rho_i^p(1)}\frac{1}{n}
	\Delta k^{p}_{i,l}(1,\xi)u^l(t,\xi)d\xi &  \nonumber \\
	+ \sum_{j=1}^m\sum_{p=i}^m
	\int\limits_{\rho_i^{p+1}(1)}^{\rho_i^p(1)} \Delta \ell^{p}_{i,j}(1,\xi)v^j(t,\xi)d\xi & \leq \nonumber \\
	(\delta_1 + \varepsilon_2  + \sqrt{m}\delta_1)\left\|\left(\begin{smallmatrix}
		\mathbf{u}(t,\cdot) \\ \mathbf{v}(t,\cdot)
	\end{smallmatrix}\right)\right\|_E, &
\end{align}
where $\delta_1$  and $\varepsilon_2$ become arbitrarily small when $n$ is sufficiently 
large.
\end{proof}
\end{lemma}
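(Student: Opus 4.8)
The plan is to obtain the representation \eqref{eq:Und} by the trivial device of adding and subtracting the exact $n+m$ kernels: set $\Delta k_{i,l}^p = \widetilde{k}_{i,l}^p - k_{i,l}^p$ and $\Delta\ell_{i,j}^p = \widetilde{\ell}_{i,j}^p - \ell_{i,j}^p$, where $k_{i,l}^p,\ell_{i,j}^p$ solve the $n+m$ kernel equations \eqref{eq:nmk}--\eqref{eq:nmkbcc} on $\mathcal{T}_i^p$ and $\widetilde{k}_{i,l}^p,\widetilde{\ell}_{i,j}^p$ are the sampled continuum kernels from \eqref{eq:kerapp} that enter \eqref{eq:Un}. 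Substituting this splitting into \eqref{eq:Un} reproduces \eqref{eq:Und} identically, so the whole content of the lemma is the uniform smallness of $\Delta k_{i,l}^p$ and $\Delta\ell_{i,j}^p$ for large $n$, and the consequent bound on the perturbation functional acting on $(\mathbf{u},\mathbf{v})$.

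For the $\mathbf{L}$-kernels this is immediate: by construction $\widetilde{\ell}_{i,j}^p = L_{i,j}^p$, and Lemma~\ref{lem:nmka} — applied with $l_{i,j}^{(1)} = l_{i,j}$, so that the artificial boundary conditions of the two kernel systems coincide, and with continuum parameters interpolating the $n+m$ data per \eqref{eq:nmcap} and satisfying Assumption~\ref{ass:infm}, which is precisely the hypothesis carried over from Theorem~\ref{thm:nmstab} — yields $|L_{i,j}^p - \ell_{i,j}^p|\le\delta_1$ uniformly on $\mathcal{T}_i^p$ for $n\ge n_{\delta_1}$. Hence $|\Delta\ell_{i,j}^p|\le\delta_1$ uniformly on $\mathcal{T}_i^p$, and in particular $\|(\Delta\ell_{i,j}^p(1,\xi))_{j=1}^m\|_{\mathbb{R}^m}\le\sqrt{m}\,\delta_1$ for all admissible $\xi$.

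The $\mathbf{K}$-kernels need one extra link, because $\widetilde{k}_{i,l}^p$ is a mean value of $K_i^p$ over the $l$-th subinterval, whereas Lemma~\ref{lem:nmka} compares $K_i^p$ with the step-function embedding $K_{i,p}^n$ of \eqref{eq:kn} (piecewise constant in $y$, equal to $k_{i,l}^p$ on the $l$-th subinterval) in the $L^2$-in-$y$ sense. I would introduce the analogous step function built from $\widetilde{k}_{i,l}^p$, observe that it equals $\mathcal{F}_n\mathcal{F}_n^{*}K_i^p(x,\xi,\cdot)$ — the mean-value (orthogonal) projection of $K_i^p(x,\xi,\cdot)$ onto piecewise-constant functions — and invoke the Lebesgue approximation of $L^2$ functions by such projections to get $\|K_i^p(x,\xi,\cdot)-\mathcal{F}_n\mathcal{F}_n^{*}K_i^p(x,\xi,\cdot)\|_{L^2}\le\varepsilon_2$ uniformly on $\mathcal{T}_i^p$ for $n$ large. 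Combining this with the estimate $\|K_i^p-K_{i,p}^n\|_{L^2}\le\delta_1$ from \eqref{eq:nmka1} via the triangle inequality gives $\|K_{i,p}^n-\mathcal{F}_n\mathcal{F}_n^{*}K_i^p\|_{L^2}\le\delta_1+\varepsilon_2$ uniformly on $\mathcal{T}_i^p$, hence at $x=1$. Since $\mathcal{F}_n$ is a scaled isometry — from \eqref{eq:Fns}, $\|\mathcal{F}_n\mathbf{b}\|_{L^2}=\tfrac{1}{\sqrt{n}}\|\mathbf{b}\|_{\mathbb{R}^n}$ — this transfers to $\tfrac{1}{\sqrt{n}}\|(k_{i,l}^p(x,\xi))_{l=1}^n-(\widetilde{k}_{i,l}^p(x,\xi))_{l=1}^n\|_{\mathbb{R}^n}\le\delta_1+\varepsilon_2$, i.e.\ a uniform bound on $\tfrac{1}{\sqrt{n}}\|(\Delta k_{i,l}^p(1,\xi))_{l=1}^n\|_{\mathbb{R}^n}$.

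Finally I would estimate the perturbation term of \eqref{eq:Und}. For the $\Delta k$ part, writing $\tfrac{1}{n}\sum_{l=1}^n(\cdot)$ as the $E$-type inner product in $\mathbb{R}^n$ and applying Cauchy--Schwarz in $l$ produces $\tfrac{1}{\sqrt{n}}\|(\Delta k_{i,l}^p(1,\xi))_l\|_{\mathbb{R}^n}$ times $\tfrac{1}{\sqrt{n}}\|\mathbf{u}(t,\xi)\|_{\mathbb{R}^n}$, and Cauchy--Schwarz in $\xi$ over the finitely many segments $[\rho_i^{p+1}(1),\rho_i^p(1)]$ then bounds the whole contribution by a constant multiple of $(\delta_1+\varepsilon_2)\|(\mathbf{u}(t,\cdot),\mathbf{v}(t,\cdot))\|_E$ — the $\tfrac{1}{n}$ weighting being exactly the one built into $\|\cdot\|_E$. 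The $\Delta\ell$ contribution is bounded the same way using the $\sqrt{m}\,\delta_1$ estimate, so the perturbation term is at most $(\delta_1+\varepsilon_2+\sqrt{m}\,\delta_1)\|(\mathbf{u},\mathbf{v})\|_E$; since $\delta_1$ is controlled by $n$ through Lemma~\ref{lem:nmka} and $\varepsilon_2$ through the mean-value convergence, the coefficient is arbitrarily small for $n$ large, which is the assertion of the lemma. I expect the main obstacle to be the bookkeeping in the third paragraph: one must keep the three $y$-dependent objects $K_i^p$, $\mathcal{F}_n\mathcal{F}_n^{*}K_i^p$, and $K_{i,p}^n$ distinct, chain an $L^2$-in-$y$ estimate originating from parameter convergence (Lemma~\ref{lem:nmka}) with one originating from Lebesgue mean-value approximation, and then pass cleanly through the isometry $\mathcal{F}_n$ so that the resulting $\mathbb{R}^n$ bound is compatible with the $\tfrac{1}{n}$-weighted inner product of $E$ under which the control law acts.
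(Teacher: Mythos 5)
Your argument matches the paper's proof step for step: the splitting into $k^p_{i,l}+\Delta k^p_{i,l}$ and $\ell^p_{i,j}+\Delta\ell^p_{i,j}$, the use of Lemma~\ref{lem:nmka} for both families, the chaining of the mean-value (Lebesgue) approximation estimate with \eqref{eq:nmka1} via the triangle inequality, the transfer through the (scaled) isometry $\mathcal{F}_n$ to obtain the $\mathbb{R}^n$ bound, and the final Cauchy--Schwarz estimate in $l$ and $\xi$ with the $1/n$ weight absorbed into the $E$-norm. This is the same proof.
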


{\itshape Proof of Theorem~\ref{thm:nmstab}.} By Lemma~\ref{lem:Und}, the control law 
\eqref{eq:Un} splits into two parts as shown in \eqref{eq:Und}, where the first part exponentially 
stabilizes the system \eqref{eq:nmm}, \eqref{eq:nmmbc} by \cite[Thm 2.1]{HuLVaz19}, while the 
second one becomes arbitrarily
small when $n$ is arbitrarily large. Thus, the exponential stability of the closed-loop system 
\eqref{eq:nmm}, 
\eqref{eq:nmmbc} 
under control law \eqref{eq:Und}, when $n$ is sufficiently large, follows directly from the 
well-posedness of \eqref{eq:nmm}, \eqref{eq:nmmbc} (see Remark~\ref{rem:nmwp}) and 
\cite[Prop. A.2]{HumBek24arxiv}.

\begin{remark}
In order to quantify an upper bound on $\delta_1$ and $\varepsilon_2$ in \eqref{eq:Udest}, one 
can employ Lyapunov-based arguments similarly to \cite[Sect. IV.C]{HumBek24arxiv}. A 
Lyapunov functional for 
\eqref{eq:nmm}, \eqref{eq:nmmbc} under the control law \eqref{eq:Und} can be constructed 
similarly to \eqref{eq:lyap} for $\mathbf{u}(t,\cdot) \in 
L^2([0,1];\mathbb{R}^n)$ (see also 
\cite[Prop. 2.1]{HuLVaz19}), i.e.,
\begin{equation}
\label{eq:lyapn}
\widetilde{V}(t) = \int\limits_0^1 \left(e^{-\tilde{\delta}
x}\|\mathbf{u}(t,x)\|_{\pmb{\Lambda^{-1}}}^2 
+ e^{\tilde{\delta} 
x}\|\pmb{\beta}(t,x)\|_{\widetilde{\mathbf{D}}\mathbf{M}^{-1}}^2\right)dx,
\end{equation}
where $\pmb{\beta}$ is defined  for $\mathbf{u}(t,\cdot)\in L^2([0,1];\mathbb{R}^n)$ in 
\cite[(28)]{HuLDiM16} and satisfies \eqref{eq:infmts2}, \eqref{eq:infmtsbc2}. The stability analysis 
follows the same 
steps
as in the proof of 
Theorem~\ref{thm:stab}, which results~in
\begin{equation}
	\label{eq:Vnest}
	\dot{\widetilde{V}}(t) \leq - \frac{\tilde{c}_V}{\max\left\{\widetilde{M}_\mu, 
	\widetilde{M}_\lambda\right\}}\widetilde{V}(t) 
	+ e^{\tilde{\delta}x}\|\pmb{\beta}(1,t)\|_{\widetilde{\mathbf{D}}}^2,
\end{equation}
for some $\tilde{c}_V, \widetilde{M}_\mu, \widetilde{M}_\lambda > 0$, where the boundary 
condition in $\pmb{\beta}(t,1)$ is 
not zero 
but it is given by the error terms in the 
control law \eqref{eq:Und}. Given the estimate \eqref{eq:Udest} and since by \cite[Thm. 
3.4]{HuLDiM16} the $n+m$
backstepping transformation (of $\pmb{\beta}$) is invertible, we can estimate
from \eqref{eq:Und}
\begin{equation}
	\label{eq:best}
	\|\pmb{\beta}(t,1)\|^2_{\widetilde{\mathbf{D}}} \leq \left\|\widetilde{\mathbf{D}}\right\|_\infty 
	m^2(\delta_1+\varepsilon_2+\sqrt{m}\delta_1)^2M_V^2\left\|\left(\begin{smallmatrix}
		\mathbf{u}(t,\cdot) \\ \pmb{\beta}(t,\cdot)
	\end{smallmatrix}\right)\right\|_E^2,
\end{equation}
where $M_V$ is a bound on the inverse backstepping transformation of $\pmb{\beta}$ for the 
$n+m$ 
case (see \cite[(45)]{HuLDiM16}). Now, if 
$\tilde{\delta}, \widetilde{\mathbf{D}} > 0$ have been fixed such that \eqref{eq:Vnest} holds for 
$\pmb{\beta}(1,t) =0$,  for $\delta_1,\varepsilon_2 > 0$ sufficiently small, given a 
sufficiently large $n$, $\dot{\widetilde{V}}$ 
remains negative definite also when $\pmb{\beta}(1,t)$ is estimated with \eqref{eq:best}.
\end{remark}

\section{Convergence of the Large-Scale System to a Continuum} \label{sec:nmapp}

While in Section~\ref{sec:nmstab} we present an approximation result that concerns 
backstepping kernels, in the present section we provide a formal proof that the actual solutions of 
the PDE system \eqref{eq:nmm}, \eqref{eq:nmmbc} converge to the solutions of the continuum 
PDE system \eqref{eq:infm}, \eqref{eq:infmbc}, as $n\to\infty$. The following result is of interest 
itself as it provides a formal connection between the solutions of the $n+m$ system and the 
solutions of its continuum counterpart.
\begin{theorem}
	\label{thm:solappr}
Consider an $n+m$ system \eqref{eq:nmm}, \eqref{eq:nmmbc} with parameters $\mu_j, 
\psi_{j,\ell}, \theta_{j,i},w_{i,j}, q_{i,j},\lambda_i$, and $\sigma_{i,l}$ for $i,l = 1,\ldots$, $n$ and  
$j,\ell = 1,\ldots,m$, satisfying Assumption~\ref{ass:nm}, initial conditions $(\mathbf{u}_0, 
\mathbf{v}_0) \in E$, and input $\mathbf{U} \in L_{\rm loc}^2([0,+\infty); \mathbf{R}^m)$. 
Construct a continuum system \eqref{eq:infm}, \eqref{eq:infmbc} with parameters $\lambda, 
\mu_j, \sigma, \theta_j, W_j, Q_j, \psi_{j,\ell}$ for $j,\ell=1,\ldots,m$ that satisfy 
Assumption~\ref{ass:infm} and \eqref{eq:nmcap}, and equip \eqref{eq:infm}, \eqref{eq:infmbc} 
with initial conditions $u_0, \mathbf{v}_0$ and input $\mathbf{U}$, such that $u_0$ is continuous 
in $y$ and satisfies 
\begin{equation}
	\label{eq:uapp}
	u_0(x,i/n) = u_0^i(x), \quad i = 1,\ldots,n.
\end{equation}
Sample the solution $(u, \mathbf{v})$ to the resulting PDE system \eqref{eq:infm}, 
\eqref{eq:infmbc} for these data into a vector-valued function 
$(\widetilde{\mathbf{u}},\widetilde{\mathbf{v}})$ as $\widetilde{\mathbf{u}}(t,x) = 
\mathcal{F}_n^*u(t,x,\cdot)$ (see \eqref{eq:Fns}) and $\widetilde{\mathbf{v}}(t,x) = 
\mathbf{v}(t,x)$, pointwise for all $t\geq 0$ and almost all $x \in [0,1]$. On any compact interval 
$t \in [0,T]$, for any given $T > 0$, we have 
	\begin{equation}
	\label{eq:unest}
	\max_{t\in[0, T]}\left\| \left( 
	\begin{smallmatrix}
		\mathbf{u}(t) \\ \mathbf{v}(t)
	\end{smallmatrix}
	\right) -  \left( 
	\begin{smallmatrix}
		\widetilde{\mathbf{u}}(t) \\ \widetilde{\mathbf{v}}(t)
	\end{smallmatrix}
	\right) \right\|_E \leq \varepsilon_3,
\end{equation}
where $\varepsilon_3 > 0$ becomes arbitrarily small when $n$ is 
sufficiently large.
\begin{proof}
The statement follows applying the same steps as in the proof of \cite[Thm 6.1]{HumBek24arxiv}. 
In a nutshell, as 
the systems \eqref{eq:nmm}, \eqref{eq:nmmbc} and \eqref{eq:infm}, \eqref{eq:infmbc} have 
well-posed solutions under the assumptions of the theorem by Remark~\ref{rem:nmwp} and 
Remark~\ref{rem:infmwp}, respectively, the solutions in particular depend continuously on the 
parameters, initial conditions, and inputs of the respective PDEs. Now, as the input to 
\eqref{eq:nmm}, \eqref{eq:nmmbc} and \eqref{eq:infm}, \eqref{eq:infmbc} is the same, while the 
respective
parameters converge as in \eqref{eq:sa} and $u_0$ can be approximated to arbitrary 
accuracy by $\mathcal{F}_n \mathbf{u}_0$ due to \eqref{eq:uapp}, we have that $\displaystyle 
\mathcal{F}\left( 
\begin{smallmatrix}
	\mathbf{u}(t) \\ \mathbf{v}(t)
\end{smallmatrix}
\right)$ approximates $\left( 
\begin{smallmatrix}
	u(t) \\ \mathbf{v}(t)
\end{smallmatrix}
\right)$ arbitrary accuracy on $E_c$, when $n$ is sufficiently large. Since 
$\widetilde{\mathbf{u}}(t,x) = \mathcal{F}_n^*u(t,x,\cdot)$ and $\widetilde{\mathbf{v}}(t,x) = 
\mathbf{v}(t,x)$, while $\mathcal{F}_n$ (and $\mathcal{F}$) is an isometry and the 
mean-value approximation $\mathcal{F}_n\mathcal{F}_n^*u(t)$ of the solution $u(t)$ (which is 
bounded on $t \in[0,T]$) is convergent (in $L^2$; 
see \cite[Sect. 1.6]{TaoBook11}), the estimate \eqref{eq:unest} follows by the triangle inequality as 
in \cite[(62)]{HumBek24arxiv}.
\end{proof}
\end{theorem}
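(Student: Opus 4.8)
The plan is to lift the finite-dimensional $n+m$ solution into $E_c$ via the isometry $\mathcal{F} = \operatorname{diag}(\mathcal{F}_n, I_m)$ from the proof of Lemma~\ref{lem:pck}, to compare it there with the genuine continuum solution using continuous dependence of weak solutions on the data, and then to transfer the resulting bound back to $E$ while separately accounting for the mean-value sampling $\mathcal{F}_n^{*}$. Write $(\mathbf{u},\mathbf{v})$ for the solution of \eqref{eq:nmm}, \eqref{eq:nmmbc} and $(u,\mathbf{v}_\infty)$ for the solution of \eqref{eq:infm}, \eqref{eq:infmbc} (which exists in $C([0,+\infty);E_c)$ by Remark~\ref{rem:infmwp} since its coefficients obey Assumption~\ref{ass:infm}), so that the sampled object in the statement is $\widetilde{\mathbf{u}} = \mathcal{F}_n^{*}u$, $\widetilde{\mathbf{v}} = \mathbf{v}_\infty$. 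Since $\mathcal{F}^{*}\mathcal{F} = I_{n+m}$ and the step parameters $\lambda^n,\sigma^n,W_j^n,\theta_j^n,Q_j^n$ of \eqref{eq:infmppc} are piecewise constant in $y$, applying $\mathcal{F}$ from the left to \eqref{eq:nmm}, \eqref{eq:nmmbc} --- exactly as done at the kernel level in Lemma~\ref{lem:pck} --- shows that $(u^n,\mathbf{v}) := \mathcal{F}(\mathbf{u},\mathbf{v})$, which lies in $C([0,+\infty);E_c)$ by Remark~\ref{rem:nmwp} and boundedness of $\mathcal{F}$, is the weak solution on $E_c$ of \eqref{eq:infm}, \eqref{eq:infmbc} with coefficients $\lambda^n,\mu_j,\sigma^n,W_j^n,\theta_j^n,Q_j^n,\psi_{j,\ell}$, initial condition $(\mathcal{F}_n\mathbf{u}_0,\mathbf{v}_0)$, and input $\mathbf{U}$; moreover $\|(\mathbf{u}(t),\mathbf{v}(t))\|_E = \|(u^n(t),\mathbf{v}(t))\|_{E_c}$ for all $t$ by isometry.

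Next I would invoke continuous dependence of the weak solution of \eqref{eq:infm}, \eqref{eq:infmbc} on its coefficients, initial datum, and input, uniformly on the compact interval $[0,T]$; this holds along the lines of \cite[Thm 6.1]{HumBek24arxiv} (the argument there comparing $n+1$ with $\infty+1$ systems) and applies to both the step coefficients and the continuum coefficients. The step coefficients converge to $\lambda,\sigma,W_j,\theta_j,Q_j$ in the senses of \eqref{eq:sa} by density of step functions (\cite[Sect. 1.3.5]{TaoBook11}), the shared coefficients $\mu_j,\psi_{j,\ell}$ are unchanged, the input is identical, and, since $u_0$ is continuous in $y$ with $u_0(\cdot,i/n)=u_0^i$ by \eqref{eq:uapp}, the step function $\mathcal{F}_n\mathbf{u}_0 = \sum_{i=1}^{n} u_0(\cdot,i/n)\chi_{((i-1)/n,i/n]}$ converges to $u_0$ in $L^2([0,1]^2;\mathbb{R})$. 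Consequently
\[
\max_{t\in[0,T]}\left\|(u^n(t),\mathbf{v}(t)) - (u(t),\mathbf{v}_\infty(t))\right\|_{E_c} \leq \varepsilon_3'(n),
\]
with $\varepsilon_3'(n)\to 0$ as $n\to\infty$.

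Finally, using once more that $\mathcal{F}$ is an isometry and that $\mathcal{F}(\widetilde{\mathbf{u}},\widetilde{\mathbf{v}}) = (\mathcal{F}_n\mathcal{F}_n^{*}u,\mathbf{v}_\infty)$, the left-hand side of \eqref{eq:unest} equals $\|(u^n(t),\mathbf{v}(t)) - (\mathcal{F}_n\mathcal{F}_n^{*}u(t),\mathbf{v}_\infty(t))\|_{E_c}$, which by the triangle inequality through $(u(t),\mathbf{v}_\infty(t))$ and the previous step is bounded by $\varepsilon_3'(n) + \|u(t) - \mathcal{F}_n\mathcal{F}_n^{*}u(t)\|_{L^2([0,1]^2;\mathbb{R})}$, the $\mathbf{v}_\infty$-components cancelling in the second term. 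Since $u \in C([0,\infty);E_c)$, the trajectory $\{u(t):t\in[0,T]\}$ is compact in $L^2([0,1]^2;\mathbb{R})$, so the mean-value projections $\mathcal{F}_n\mathcal{F}_n^{*}$ --- a sequence of contractions converging strongly to the identity (\cite[Sect. 1.6]{TaoBook11}) --- converge to the identity uniformly on this set; hence the last term is bounded by some $\varepsilon_3''(n)\to 0$. Taking the maximum over $t\in[0,T]$ and setting $\varepsilon_3 = \varepsilon_3'(n) + \varepsilon_3''(n)$ yields \eqref{eq:unest}.

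The step I expect to be the main obstacle is securing uniformity in $t$ on $[0,T]$: the continuous-dependence estimate must be established on the whole interval rather than pointwise in $t$, and the mean-value convergence $\mathcal{F}_n\mathcal{F}_n^{*}u(t)\to u(t)$ must be upgraded from pointwise-in-$t$ to uniform convergence, which relies on the relative compactness of $\{u(t):t\in[0,T]\}$ in $E_c$ afforded by $u\in C([0,\infty);E_c)$. A secondary point requiring care is the precise verification that $\mathcal{F}$ carries the $n+m$ weak solution to a bona fide weak solution of the step-coefficient continuum problem (matching of test functions and boundary terms), but this parallels the kernel-level computation already carried out in Lemma~\ref{lem:pck} and is routine.
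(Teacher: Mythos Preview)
Your proposal is correct and follows essentially the same approach as the paper's proof: lift the $n+m$ solution into $E_c$ via the isometry $\mathcal{F}$, invoke continuous dependence of the continuum solution on coefficients and initial data (via the convergence \eqref{eq:sa} and the step-function approximation of $u_0$), and close with the triangle inequality through $(u(t),\mathbf{v}_\infty(t))$ together with $L^2$-convergence of the mean-value projection $\mathcal{F}_n\mathcal{F}_n^{*}$. Your write-up is in fact more detailed than the paper's sketch (which largely defers to \cite[Thm 6.1]{HumBek24arxiv}), and your explicit attention to the uniformity-in-$t$ issues---compactness of $\{u(t):t\in[0,T]\}$ and uniform continuous dependence on $[0,T]$---addresses precisely the points the paper leaves implicit.
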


\section{Numerical Example and Simulation Results} \label{sec:ex}

Consider the parameters for $x,y,\eta \in [0,1]$
\begin{subequations}
\label{eq:exp}
	\begin{align}
		\lambda(x,y) & = 1, \quad \mu_1(x) = 2, \quad \mu_2(x) = 1 \\
		\sigma(x,y,\eta) & = x^3(x+1)\left(y - \frac{1}{2}\right)\left(\eta- \frac{1}{2}\right), \\
		W_1(x,y) & = W_2(x,y) = x(x+1)e^x\left(y - \frac{1}{2}\right), \\
		\theta_1(x,y) & = -3y(y-1), \quad
		\theta_2(x,y) = -2y(y-1), \\
		\psi_{i,j}(x) & = 0, \quad i,j \in \{1,2\}, \\
		Q_1(y) & = 8\left(y - \frac{1}{2}\right), \quad
		Q_2(y) = -8(y-2),
	\end{align}
\end{subequations}
corresponding to an $\infty+m$ system for $m=2$, which can be viewed as a continuum 
approximation of an 
$n+m$ system (for large $n$) based on \eqref{eq:nmcap} with respective parameters 
\begin{subequations}
	\label{eq:expnm}%
	\begin{align}
		\lambda_i(x) & = 1, \\
		 \sigma_{i,l}(x) & = x^3(x+1)\left(\frac{i}{n} - \frac{1}{2}\right)\left(\frac{l}{n}- \frac{1}{2}\right), 
		 \\
		\theta_{1,i}(x) & = -3\frac{i}{n}\left(\frac{i}{n}-1\right), \quad 	\theta_{2,i}(x) = 
		-2\frac{i}{n}\left(\frac{i}{n}-1\right), \\
		w_{i,1}(x) & = w_{i,2}(x) = x(x+1)e^x\left(\frac{i}{n} - \frac{1}{2}\right), \\
		q_{i,1} & = 8\left(\frac{i}{n} - \frac{1}{2}\right), \quad q_{i,2} = -8\left(\frac{i}{n} - 2\right),
	\end{align}
\end{subequations}
for $i,l = 1,\ldots,n$. For the parameters \eqref{eq:exp}, the solution to 
the continuum kernel equations \eqref{eq:infmkLk}, \eqref{eq:infmkLkbc}--\eqref{eq:infmkbcc}, 
where we choose $l_{2,1}^{(1)} = \psi_{2,1} = 0$, is explicitly given by
\begin{subequations}
	\label{eq:exk}
	\begin{align}
	K_1^1(x,\xi,y) & = y(y-1), \\
	K_1^2(x,\xi,y) & = e^{x-2\xi}y(y-1), \\
	K_2^2(x,\xi,y) & = e^{2(x-\xi)}y(y-1), \\
	L_{1,1}^1(x,\xi) & = L_{1,1}^2(x,\xi) = 0, \\
	L_{1,2}^1(x,\xi) & = 0, \quad L_{1,2}^2(x,\xi) = -2e^{x-2\xi}, \\
	L_{2,1}^2(x,\xi) & = 0, \quad	L_{2,2}^2(x,\xi) = -2e^{2(x-\xi)},
	\end{align}
\end{subequations}
where $K_1^\star(\cdot,y), L_{1,1}^\star$, and $L_{1,2}^\star$ are defined on $\mathcal{T}_1^1
= \{(x,\xi) \in [0,1]^2: \frac{1}{2}x \leq \xi \leq x \}$ and $\mathcal{T}_1^2
= \{(x,\xi) \in [0,1]^2: \xi \leq \frac{1}{2}x \}$ for the respective superindex $\star = 1,2$, while 
$K_2^2(\cdot,y), L^2_{2,1}$ and $L^2_{2,2}$ are defined on $\mathcal{T}_2^2 = 
\mathcal{T}$, for each $y\in [0,1]$. Note the discontinuity in $L_{1,2}$ along $\xi = \frac{1}{2}x$.

We implement the control law \eqref{eq:Un} with \eqref{eq:kerapp} for $n+m$ systems with 
parameters \eqref{eq:expnm} for $n=2,6,10$ (and the same $\mu, \Psi$ as in \eqref{eq:exp}). As 
the continuum kernels \eqref{eq:exk} are continuous in $y$, we approximate the exact $n+m$ 
kernels by sampling pointwise $K_1^1, K_1^2$, and $K_2^2$ at $y = 1/n, 2/n, \ldots, 1$ instead of 
using \eqref{eq:kerapp1} (see Footnote~\ref{fn:kerapp}).

For the simulation, the $n+m$ system \eqref{eq:nmm}, \eqref{eq:nmmbc} is approximated by 
finite differences with $256$ grid points in $x \in [0,1]$. The ODE resulting from the 
finite-difference approximation is solved using \texttt{ode45} in MATLAB. The initial conditions 
are $u_0^i(x) = q_{i,1} + q_{i,2}$, for $i=1,\ldots,n$, and $v_0^1(x) = v_0^2(x) = 1$, for all $x \in 
[0,1]$. The simulation results for $t \in [0, 5] $ are shown in Figures \ref{fig:U} and \ref{fig:uv}, 
which show the controls \eqref{eq:Un} for $n=2,6,10$ along with the exact controls for 
$n=10$ (computed using the kernels in \ref{app:nm}) and the solution 
components $u^{n}$ and $v^1$ for $n=10$, respectively. We note that the controls shown in 
Figure~\ref{fig:U} act as weighted averages of the solution components, but since $K_1^1, 
K_1^2$, and $K_2^2$ vanish at $y = 1$ and $L_{1,1} = L_{2,1} = 0$, the solution components 
$u^{n}$ and $v^1$ do not  affect the control law and are hence displayed separately in 
Figure~\ref{fig:uv} for $n=10$.

\begin{figure}[!ht]
	\includegraphics[width=\columnwidth]{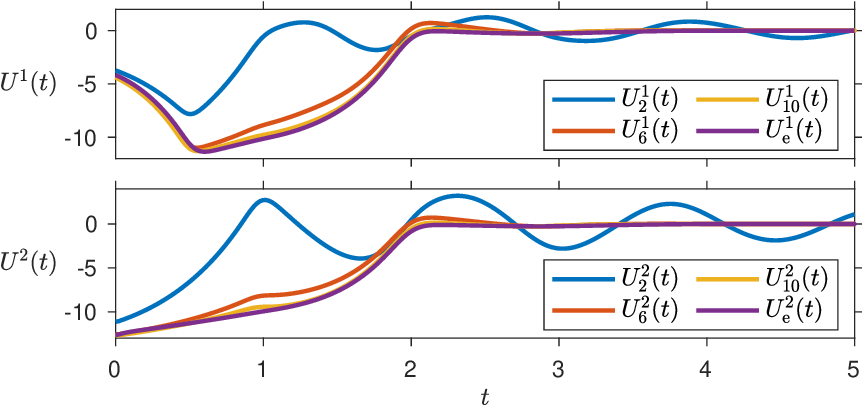}
	\caption{The controls $U(t)$ based on the approximate control law \eqref{eq:Un} for 
	$n=2,6,10$ and the respective exact control law for $n=10$.}
	\label{fig:U}
\end{figure}

\begin{figure}[!ht]
	\includegraphics[width=\columnwidth]{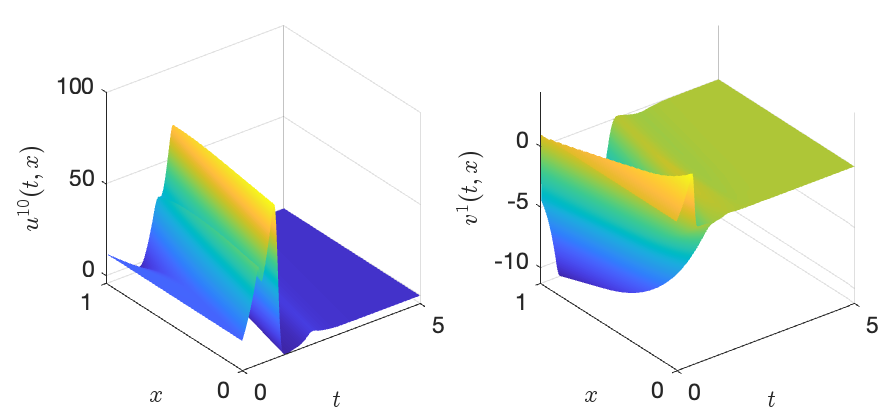}
	\caption{The solution components $u^n(t,x)$ and $v^1(t,x)$ for $n=10$.}
	\label{fig:uv}
\end{figure}

Based on Figures \ref{fig:U} and \ref{fig:uv}, we conclude that 
the control law \eqref{eq:Un} based on the continuum kernels \eqref{eq:exk} exponentially 
stabilizes the $n+m$ system for $n=2,6,10$ (and $m=2$)\footnote{Based on numerical 
simulations, the $n+2$ system with parameters \eqref{eq:expnm} is unstable for any $n \geq 
2$.}, with improved performance for larger $n$. This verifies the theoretical results. Furthermore, 
based on Figure~\ref{fig:U}, the continuum 
kernels-based control law tends close to the exact control law computed based on the $n+m$ 
kernels, as $n$ increases. We note that the $n+m$ kernels are computed based on a 
finite-difference approximation of the $n+m$ kernel equations in \ref{app:nm}, since we were not 
able to find the solution in closed form.

\section{Conclusions and Discussion} \label{sec:conc}

We introduced a backstepping control design methodology for a class of continua of hyperbolic 
PDE systems. Well-posedness of the derived kernel equations was established, together with 
exponential stability of the closed-loop system. We then utilize the continuum backstepping 
kernels for stabilization of a large-scale system counterpart, establishing that, as $n\to\infty$, the 
continuum kernels can approximate (to arbitrary accuracy) the exact backstepping kernels 
(constructed via applying backstepping to the large-scale system). This 
allowed us to prove that the control design constructed on the basis of the continuum PDE system 
can stabilize the respective large-scale system, which may be particularly useful, as, with this 
approach, complexity of computation of stabilizing kernels may not grow with the number $n$ of 
PDE systems components. This was also demonstrated in a numerical example for which the 
continuum kernels were obtained in closed form, but for which the respective, large-scale kernels 
did not exhibit a closed-form solution. We also provided a formal convergence result of the 
solutions of the large-scale PDE system to the solutions of the respective continuum.

The case $m\to\infty$ requires a quite different treatment through development of new analysis 
tools that cannot be obtained in an obvious manner via extending the tools developed here. Some 
of the main reasons for this are the following. As $m\to\infty$ the input space changes from 
$\mathbb{R}^m$ to, e.g., $L^2([0,1];\mathbb{R})$, which may impose important changes in the 
present analysis and results. In particular, as the exact, control inputs themselves (rather than the 
respective control kernels) would have to be approximated (in a certain sense), it is neither clear 
what are the stability properties of the closed-loop system one would obtain nor how to translate 
the analysis performed for finite $m$ to the case $m\to\infty$. In fact, in contrast to the present 
paper that deals with approximation of the control kernels, which gives rise to a bounded, 
vanishing perturbation that preserves exponential stability, in the case of control inputs 
approximation one may have to prove a type of practical stability with a residual value that tends 
to zero as $m\to\infty$, which would require introduction of a different, stability proof strategy (in 
particular, a respective result to Theorem~\ref{thm:solappr} of solutions' convergence may be 
essential). 
Furthermore, the characteristic curves would become 3-D regions, which makes the respective 
well-posedness analysis of the kernels much more involved. In particular, it is not obvious how 
one would then have to split the 4-D domain of evolution of the kernels into subdomains in which 
the kernels are continuous, which involves deriving 3-D discontinuity regions. In 
addition, the case $m\to\infty$ may impose important challenges in the purely technical steps. In 
particular, as the transport speeds $\mu_j$ would become a function of two variables, namely 
$\mu(x,\eta)$, it is not obvious how the assumptions made here (e.g., \eqref{eq:muass}) would 
have to be 
translated. In turn, this imposes challenges on how the well-posedness analysis of the kernels 
would have to be carried out, as, for example, one may have to properly translate the boundary 
conditions (e.g., \eqref{eq:infmkLkbca}), as well as to re-derive from scratch certain bounds (e.g., 
corresponding to 
\eqref{eq:MB1}, \eqref{eq:eps}), specifically for the case $m\to\infty$.
 
\appendix

\section{Derivation of Continuum Kernel Equations \eqref{eq:infmk}} \label{app:infmk}

\setcounter{theorem}{0}
\renewcommand{\thetheorem}{A.\arabic{theorem}}

Let us first differentiate \eqref{eq:infmV2} with respect to $x$ and use the Leibniz
rule to get
\begin{align}
\pmb{\beta}_x(t,x) & = \mathbf{v}_x(t,x) -
\mathbf{L}(x,x)\mathbf{v}(t,x)
\nonumber \\
& \qquad - \int\limits_0^1\mathbf{K}(x,x,y)u(t,x,y)dy \nonumber \\
& \qquad  -
\int\limits_0^x\mathbf{L}_x(x,\xi)\mathbf{v}(t,\xi)d\xi
\nonumber \\
& \qquad  - \int\limits_0^x \int\limits_0^1\mathbf{K}_x(x,\xi,y)u(t,\xi,y)dyd\xi.
\end{align}
Moreover, differentiating \eqref{eq:infmV2} with respect to $t$
and using \eqref{eq:infm2} gives
\begin{align}
\pmb{\beta}_t(t,x)
& = \mathbf{M}(x)\mathbf{v}_x(t,x) +
\int\limits_0^1\pmb{\Theta}(x,y)u(t,x,y)dy 
\nonumber \\
& \qquad+ \pmb{\Psi}(x)\mathbf{v}(t,x)  - 
\int\limits_0^x\mathbf{L}(x,\xi)\mathbf{M}(\xi)\mathbf{v}_\xi(t,\xi)d\xi
\nonumber \\
& \qquad - \int\limits_0^x
\mathbf{L}(x,\xi)\int\limits_0^1\pmb{\Theta}(\xi,y)u(t,\xi,y)dyd\xi
\nonumber \\
& \qquad
-\int\limits_0^x\mathbf{L}(x,\xi)\pmb{\Psi}(\xi)\mathbf{v}(t,\xi)d\xi
\nonumber \\
& \qquad + \int\limits_0^x
\int\limits_0^1\mathbf{K}(x,\xi,y)\lambda(\xi,y)u_{\xi}(t,\xi,y)dy
d\xi \nonumber \\
& \qquad - \resizebox{.71\columnwidth}{!}{$\displaystyle 
\int\limits_0^x\int\limits_0^1\mathbf{K}(x,\xi,y)
\int\limits_0^1\sigma(\xi,y,\eta)u(t,\xi,\eta)d\eta dy d\xi$}
\nonumber \\
& \qquad -
\int\limits_0^x\int\limits_0^1\mathbf{K}(x,\xi,y)\mathbf{W}(\xi,y)\mathbf{v}(t,\xi)dy
d\xi,
\end{align}
where integration by parts further gives 
\begin{align}
\int\limits_0^x\mathbf{L}(x,\xi)\mathbf{M}(\xi)\mathbf{v}_\xi(t,\xi)d\xi
& = \nonumber \\
\mathbf{L}(x,x)\mathbf{M}(x)\mathbf{v}(t,x)
- \mathbf{L}(x,0)\mathbf{M}(0)\mathbf{v}(t,0) & \nonumber \\
- \int\limits_0^x \left(\mathbf{L}_{\xi}(x,\xi)\mathbf{M}(\xi) +
\mathbf{L}(x,\xi)\mathbf{M}'(\xi)\right)\mathbf{v}(t,\xi)d\xi,
\end{align}
and
\begin{align}
\int\limits_0^x\mathbf{K}(x,\xi,y)\lambda(\xi,y)u_{\xi}(t,\xi,y)
d\xi
& = \nonumber \\
\mathbf{K}(x,x,y)\lambda(x,y)u(t,x,y) -
\mathbf{K}(x,0,y)\lambda(0,y)u(t,0,y)
& \nonumber \\
- \int\limits_0^x(\mathbf{K}_{\xi}(x,\xi,y)\lambda(\xi,y) +
\mathbf{K}(x,\xi,y)\lambda_{\xi}(\xi,y))u(t,\xi,y)d\xi.
\end{align}
Thus, in order for \eqref{eq:infmts} to hold, the kernels
$\mathbf{L}$ and $\mathbf{K}$ need to satisfy 
 \eqref{eq:infmk}, \eqref{eq:infmkbc}, 
where we also used \eqref{eq:infmtsbc1} with $\alpha(t,0,\cdot) = u(t,0,\cdot)$ and 
$\pmb{\beta}(t,0) = \mathbf{v}(t,0)$ for all $t \geq 0$. Moreover, inserting \eqref{eq:infmV} into 
\eqref{eq:infmts1} gives that $\mathbf{C}^-$ and $C^+$ need to satisfy
\begin{subequations}
\label{eq:C+-app}%
\begin{align}
  \mathbf{C}^-(x,\xi,y)
  & = \mathbf{W}(x,y)\mathbf{L}(x,\xi) +
    \int\limits_{\xi}^x
    \mathbf{C}^-(x,\zeta,y)\mathbf{L}(\zeta,\xi)d\zeta, \label{eq:C-app}\\
  C^+(x,\xi,y,\eta)
  & = \mathbf{W}(x,y)\mathbf{K}(x,\xi,\eta)
    \nonumber \\
  & \qquad + \int\limits_{\xi}^x\mathbf{C}^-(x,\zeta,
    y)\mathbf{K}(\zeta,\xi,\eta)d\zeta, \label{eq:C+app}
\end{align}
\end{subequations}
for almost all $0 \leq \xi \leq x \leq 1$ and $y,\eta \in
[0,1]$ (when applicable). Once $\mathbf{L}$ and $\mathbf{K}$ are solved from the kernel
equations \eqref{eq:infmk}, \eqref{eq:infmkbc}, then \eqref{eq:C-app}
is a Volterra equation of second kind, and well-studied in the
literature. We show in Lemma~\ref{lem:C-} that \eqref{eq:C-app}
has a well-posed solution $\mathbf{C}^- \in L^\infty(\mathcal{T}; L^2([0,1]; \mathbb{R}^{1\times 
m}))$. Once $\mathbf{C}^-$ is solved from \eqref{eq:C-app}, $C^+$ is 
explicitly given as a function of $\mathbf{W}, \mathbf{K}$ and
$\mathbf{C}^-$ by \eqref{eq:C+app}, by which $C^+ \in 
L^\infty(\mathcal{T};L^2([0,1]^2;\mathbb{R}))$ follows.

\begin{lemma}
  \label{lem:C-}
  Under Assumption~\ref{ass:infm}, the equation \eqref{eq:C-app} admits a unique solution 
  $\mathbf{C}^-  \in L^\infty(\mathcal{T}; L^2([0,1]; \mathbb{R}^{1\times 
  	m}))$.
\begin{proof}
Utilizing similar tools as in \cite[Thm 2.3.5]{HocBook} and
\cite[Lem. A.2]{HuLVaz19}, we show that $\mathbf{C}^-$ is given
by the series 
\begin{equation}
  \label{eq:C-ser}
  \mathbf{C}^-(x,\xi,y) = \sum_{k=0}^{\infty} \Delta \mathbf{C}_k^-(x,\xi,y),
\end{equation}
where $\Delta \mathbf{C}_0^-(x,\xi,y) =
\mathbf{W}(x,y)\mathbf{L}(x,\xi)$, so that $\Delta\mathbf{C}_0 \in C(\mathcal{T}_i^p; L^2([0,1]; 
\mathbb{R}))$ for any $1 \leq i \leq p \leq m$
by Theorem~\ref{thm:infmkwp}, and $\Delta \mathbf{C}_k^-$ for $k
\geq 1$ is defined recursively by 
\begin{equation}
  \label{eq:C-iter}
  \Delta \mathbf{C}_k^-(x,\xi,y) = \int\limits_{\xi}^x \Delta
  \mathbf{C}_{k-1}^-(x,\zeta,y) \mathbf{L}(\zeta,\xi)d\zeta,
\end{equation}
by which $\Delta \mathbf{C}_k^- \in C(\mathcal{T}_i^p;
L^2([0,1];\mathbb{R}))$ for $k \geq 1$. By induction, it immediately
follows that $\Delta \mathbf{C}_k$ satisfy
\begin{align}
  \label{eq:C-ind}
  \max_{j\in \left\{ 1,\ldots,m \right\}}\esssup_{(x,\xi) \in
  \mathcal{T}}\|\left( \Delta C_j^-\right)_k(x,\xi,\cdot)\|_{L^2}
  & \leq \nonumber \\
   M_W\frac{(M_L)^{k+1}(x-\xi)^k}{k!}, &
\end{align}
where 
\begin{equation}
  \label{eq:ML}
  M_L = \max_{i,j \in \left\{ 1,\ldots,m \right\}} \esssup_{(x,\xi)
    \in \mathcal{T}} \left| L_{i,j}(x,\xi) \right|,
\end{equation}
and $M_W$ is given in \eqref{eq:MW}. Thus, the series 
\eqref{eq:C-ser} converges on $L^\infty(\mathcal{T}; L^2([0,1];\mathbb{R}))$ to the stated  
solution to \eqref{eq:C-app}. 
\end{proof}
\end{lemma}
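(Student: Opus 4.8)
The plan is to treat \eqref{eq:C-app} as a Volterra equation of the second kind in the variable $\xi$, with $x$ and $y$ frozen as parameters: the free term is $\mathbf{W}(x,y)\mathbf{L}(x,\xi)$ and the Volterra operator is $f \mapsto \int_\xi^x f(\zeta)\mathbf{L}(\zeta,\xi)\,d\zeta$, whose integration range $[\xi,x]$ shrinks to a point as $\xi \to x$. I would then solve it by the Neumann series \eqref{eq:C-ser}--\eqref{eq:C-iter}, i.e.\ by Picard iteration started from $\Delta\mathbf{C}_0^- = \mathbf{W}(x,y)\mathbf{L}(x,\xi)$. The two constants that keep everything finite are $M_L$ from \eqref{eq:ML}, which is finite because $\mathbf{L}\in L^\infty(\mathcal{T};\mathbb{R}^{m\times m})$ by Theorem~\ref{thm:infmkwp}, and $M_W$ from \eqref{eq:MW}, which is finite because $\mathbf{W}\in C([0,1];L^2([0,1];\mathbb{R}^{1\times m}))$ by Assumption~\ref{ass:infm} together with compactness of $[0,1]$.

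The heart of the argument is the factorial bound \eqref{eq:C-ind}, which I would establish by induction on $k$. The base case $k=0$ is immediate from the definition of $\Delta\mathbf{C}_0^-$ and the Cauchy--Schwartz inequality. For the inductive step I would take $L^2$ norms in $y$ on both sides of \eqref{eq:C-iter} and move the norm inside the $\zeta$-integral by Minkowski's integral inequality, bound the $\mathbf{L}$ factor by $M_L$, substitute the induction hypothesis, and use $\int_\xi^x (x-\zeta)^{k-1}\,d\zeta = (x-\xi)^k/k$; this reproduces \eqref{eq:C-ind} at level $k$. Summing the resulting series then gives $\sum_{k\ge 0}\|(\Delta C_j^-)_k(x,\xi,\cdot)\|_{L^2} \le M_W M_L e^{M_L(x-\xi)} \le M_W M_L e^{M_L}$ uniformly over $\mathcal{T}$, so \eqref{eq:C-ser} converges absolutely in $L^\infty(\mathcal{T};L^2([0,1];\mathbb{R}^{1\times m}))$, and its sum $\mathbf{C}^-$ lies in that space with the same uniform bound. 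Since $\mathbf{W}$ is continuous and $\mathbf{L}$ is continuous on each subdomain $\mathcal{T}_i^p$ by Theorem~\ref{thm:infmkwp}, each $\Delta\mathbf{C}_k^-$ is continuous on $\mathcal{T}_i^p$, and the uniform convergence transfers this to $\mathbf{C}^-$ on every $\mathcal{T}_i^p$.

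It then remains to check that $\mathbf{C}^-$ actually solves \eqref{eq:C-app} and is the only such solution. For existence, I would substitute the series into the right-hand side of \eqref{eq:C-app}; absolute convergence justifies exchanging summation and integration, so the right-hand side collapses to $\Delta\mathbf{C}_0^- + \sum_{k\ge 0}\Delta\mathbf{C}_{k+1}^- = \mathbf{C}^-$. For uniqueness, if $\mathbf{C}_1^-$ and $\mathbf{C}_2^-$ both solve \eqref{eq:C-app}, their difference $\mathbf{D}$ satisfies the homogeneous identity $\mathbf{D}(x,\xi,y) = \int_\xi^x \mathbf{D}(x,\zeta,y)\mathbf{L}(\zeta,\xi)\,d\zeta$; iterating this $k$ times with the same Minkowski-plus-$M_L$ estimate yields $\|\mathbf{D}(x,\xi,\cdot)\|_{L^2} \le (M_L^k (x-\xi)^k/k!)\,\esssup_{\mathcal{T}}\|\mathbf{D}\|_{L^2} \to 0$ as $k\to\infty$, hence $\mathbf{D}=0$ a.e.

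I do not expect a serious obstacle here: this is essentially a textbook contraction / Neumann-series argument (cf.\ \cite[Thm 2.3.5]{HocBook}, \cite[Lem. A.2]{HuLVaz19}). The only points needing genuine care are that the Volterra ``kernel'' is $L^2$-valued in $y$, so one must systematically use Minkowski's integral inequality (rather than a purely scalar Gr\"onwall estimate) to stay inside $L^\infty(\mathcal{T};L^2)$, and that the finiteness of $M_L$ and $M_W$ has to be imported from Theorem~\ref{thm:infmkwp} and Assumption~\ref{ass:infm}, respectively. Once $\mathbf{C}^-$ is in hand, $C^+$ is given explicitly by \eqref{eq:C+app}, and its membership in $L^\infty(\mathcal{T};L^2([0,1]^2;\mathbb{R}))$ follows from the bounds on $\mathbf{W}$, $\mathbf{K}$, and $\mathbf{C}^-$.
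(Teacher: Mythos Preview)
Your proposal is correct and follows essentially the same approach as the paper's own proof: both solve \eqref{eq:C-app} via the Neumann series \eqref{eq:C-ser}--\eqref{eq:C-iter}, establish the factorial bound \eqref{eq:C-ind} by induction, and conclude convergence in $L^\infty(\mathcal{T};L^2)$ from the finiteness of $M_W$ and $M_L$. If anything, you are slightly more explicit than the paper, which omits the verification that the series solves \eqref{eq:C-app} and the uniqueness argument; your use of Minkowski's integral inequality to handle the $L^2$-in-$y$ structure is exactly the right tool and is implicit in the paper's induction step.
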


\section{Invertibility of \eqref{eq:infmV}} \label{app:invk}

\begin{lemma}
\label{lem:invk}
Under Assumption~\ref{ass:infm}, the transformation \eqref{eq:infmV} is boundedly invertible on 
$E_c$. 
\begin{proof}
The claim follows after solving for $(u, \mathbf{v})$ from \eqref{eq:infmV}. Since 
 $u = \alpha$, inserting this to \eqref{eq:infmV2} gives
\begin{align}
\label{eq:vab}
  \mathbf{v}(t,x) -
\int\limits_0^x\mathbf{L}(x,\xi)\mathbf{v}(t,\xi)d\xi & = \nonumber
\\
\int\limits_0^x \int\limits_0^1
\mathbf{K}(x,\xi,y)\alpha(t,\xi,y)dyd\xi - \pmb{\beta}(t,x), &
\end{align}	
which is a Volterra equation of second kind for $\mathbf{v}(t,\cdot)$ in terms of $\alpha(t,\cdot), 
\pmb{\beta}(t,\cdot), \mathbf{L}$, and $\mathbf{K}$, for any (fixed) $t \geq 0$. Since $(\alpha, 
\pmb{\beta}) \in C([0, +\infty); E_c)$, being the solution to \eqref{eq:infmts}, \eqref{eq:infmtsbc} 
(by Theorem~\ref{thm:stab}),
and as $\mathbf{K} \in L^\infty(\mathcal{T}; L^2([0,1];\mathbb{R}^m)$, $\mathbf{L} \in 
L^\infty(\mathcal{T}; \mathbb{R}^{m\times m})$ by Theorem~\ref{thm:infmkwp}, the equation 
\eqref{eq:vab} has a unique solution $\mathbf{v}(t,\cdot) \in L^2([0,1]; \mathbb{R}^m)$ for all $t 
\geq 0$ by \cite[Thm 2.3.6]{HocBook}.
\end{proof}
\end{lemma}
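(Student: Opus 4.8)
The plan is to invert \eqref{eq:infmV} explicitly and to check that the resulting inverse is a bounded operator on $E_c$. Since the transformation acts on the spatial profiles at each instant, one works pointwise in $t\ge 0$, so I suppress $t$ and take $(\alpha,\pmb{\beta})\in E_c$. Boundedness of the forward map \eqref{eq:infmV} is itself immediate from $\mathbf{L}\in L^\infty(\mathcal{T};\mathbb{R}^{m\times m})$ and $\mathbf{K}\in L^\infty(\mathcal{T};L^2([0,1];\mathbb{R}^m))$ (Theorem~\ref{thm:infmkwp}); by \eqref{eq:infmV1} the first component is trivially inverted as $u=\alpha$, so the only work is to recover $\mathbf{v}$ from $\pmb{\beta}$ and $\alpha$.

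Substituting $u=\alpha$ into \eqref{eq:infmV2} and rearranging, $\mathbf{v}$ must solve the Volterra equation of the second kind
\[
\mathbf{v}(x)-\int_0^x\mathbf{L}(x,\xi)\mathbf{v}(\xi)\,d\xi=\pmb{\beta}(x)+\int_0^x\int_0^1\mathbf{K}(x,\xi,y)\alpha(\xi,y)\,dy\,d\xi=:\mathbf{g}(x).
\]
Applying the Cauchy--Schwarz inequality in $y$ to the $\mathbf{K}$-term together with the bounds of Theorem~\ref{thm:infmkwp}, one verifies that $\mathbf{g}\in L^2([0,1];\mathbb{R}^m)$ with $\|\mathbf{g}\|_{L^2}$ controlled by a constant (independent of $t$) times $\|(\alpha,\pmb{\beta})\|_{E_c}$. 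I would then invoke standard second-kind Volterra theory --- e.g.\ \cite[Thm 2.3.6]{HocBook}, or equivalently the Neumann/successive-approximations argument using the factorial bound $\|\mathbf{L}^{(k)}(x,\xi)\|\le M_{\mathbf{L}}^{k}(x-\xi)^{k-1}/(k-1)!$ on the iterated kernels, where $M_{\mathbf{L}}=\esssup_{\mathcal{T}}\|\mathbf{L}\|<\infty$, so that the resolvent series converges --- to conclude that $\mathbf{v}\mapsto\mathbf{v}-\int_0^\cdot\mathbf{L}(\cdot,\xi)\mathbf{v}(\xi)\,d\xi$ is boundedly invertible on $L^2([0,1];\mathbb{R}^m)$. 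This gives a unique $\mathbf{v}\in L^2([0,1];\mathbb{R}^m)$ depending linearly and boundedly on $\mathbf{g}$, hence on $(\alpha,\pmb{\beta})$; assembling it with $u=\alpha$ exhibits a bounded linear inverse of \eqref{eq:infmV} on $E_c$. (For the closed-loop trajectories one has $(\alpha,\pmb{\beta})\in C([0,+\infty);E_c)$ by Theorem~\ref{thm:stab}, and since the inverse just constructed is time-independent and bounded, the recovered $(u,\mathbf{v})$ lies in $C([0,+\infty);E_c)$ as well.)

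I do not expect a genuine obstacle here; this is the standard second-kind Volterra inversion. The only mildly delicate point is that $\mathbf{K}$ is $L^2$-valued in the ensemble variable $y$ rather than scalar, so one must check that the ``$\mathbf{K}$-part'' of the transformation maps $E_c$ boundedly into $L^2([0,1];\mathbb{R}^m)$ --- which it does, with all estimates passing through $\|\mathbf{K}\|_{L^\infty(\mathcal{T};L^2)}$ and $M_{\mathbf{L}}$, both finite by Theorem~\ref{thm:infmkwp} and uniform in $t$. If one prefers a version of the Volterra result stated for continuous kernels, one may instead restrict to the subdomains $\mathcal{T}_i^p$ of Theorem~\ref{thm:infmkwp}, on which $\mathbf{L}$ and $\mathbf{K}$ are continuous, and patch the resulting solutions together across the finitely many interface curves (which are of measure zero).
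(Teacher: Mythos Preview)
Your proposal is correct and follows essentially the same route as the paper: set $u=\alpha$, rewrite \eqref{eq:infmV2} as a second-kind Volterra equation for $\mathbf{v}$ with right-hand side built from $\pmb{\beta}$ and the $\mathbf{K}$-term, and invoke the standard solvability result (the paper cites the same \cite[Thm~2.3.6]{HocBook}) using the regularity of $\mathbf{K},\mathbf{L}$ from Theorem~\ref{thm:infmkwp}. Your version is in fact a bit more explicit about boundedness of the inverse and about the $L^2$-in-$y$ estimate for the $\mathbf{K}$-part, and your sign on $\pmb{\beta}$ in the right-hand side is the correct one.
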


\section{Kernel Equations for Linear Hyperbolic $n+m$ PDEs} \label{app:nm}

Denote $\mathbf{k}_i^p = \left(k_{i,l}^p\right)_{l=1}^n$ and $\pmb{\ell}_i^p = 
\left(\ell_{i,j}^p\right)_{j=1}^m$, where $k_{i,l}^p, \ell_{i,j}^p$ for $l = 1,\ldots,n, j =1,\dots,m$, and 
$1 \leq i \leq p \leq m$ denote the $n+m$ kernels restricted to $\mathcal{T}_i^p$. Using the 
notation of \eqref{eq:nmm}, \eqref{eq:nmmbc}, these satisfy 
the kernel equations (cf. \cite[(A.19)--(A.23)]{HuLVaz19})
\begin{subequations}
\label{eq:nmk}
\begin{align}
	\mu_i(x) \partial_x \mathbf{k}_i^p(x,\xi) - \pmb{\Lambda}(\xi)\partial_\xi \mathbf{k}_i^p(x,\xi) - 
	\pmb{\Lambda}'(\xi) \mathbf{k}_i^p(x,\xi) & = \nonumber \\
	\frac{1}{n}\mathbf{\Sigma}^T(\xi)\mathbf{k}_i^p(x,\xi) + \pmb{\Theta}^T(\xi)\pmb{\ell}_i^p(x,\xi),
	& \label{eq:nmkk} \\
	\mu_i(x)\partial_x \pmb{\ell}_i^p(x,\xi) + \mathbf{M}(\xi)\partial_\xi\pmb{\ell}_i^p(x,\xi) + 
	\mathbf{M}'(\xi)\pmb{\ell}_i^p(x,\xi) & = \nonumber \\
	\frac{1}{n}\mathbf{W}^T(\xi)\mathbf{k}_i^p(x,\xi) + \pmb{\Psi}^T(\xi)\pmb{\ell}_i^p(x,\xi),
	& \	\label{eq:nmkl} 
\end{align}
\end{subequations}
on $\mathcal{T}_i^p$, $1 \leq i \leq p \leq m$, with boundary conditions
\begin{subequations}
	\label{eq:nmkbc}%
	\begin{align}
		\mu_i(x)\mathbf{k}_i^i(x,x) + \pmb{\Lambda}(x)\mathbf{k}_i^i(x,x) & = 
		-\pmb{\Theta}_{i,\cdot}^T(x),  
		\label{eq:nmkbc1} \\
		\mu_i(x)\pmb{\ell}_i^i(x,x) - \mathbf{M}(x)\pmb{\ell}_i^i(x,x) & = -\pmb{\Psi}_{i,\cdot}^T(x), 
		\label{eq:nmkbc2} 
		\\
		\frac{1}{n}\mathbf{Q}^T\pmb{\Lambda}(0)\mathbf{k}_i^m(x,0) - 	
		\mathbf{M}(0)\pmb{\ell}_i^m(x,0) & = \mathbf{g}_i(x),  \label{eq:nmkbc3}
	\end{align}
\end{subequations}
where $\mathbf{g}_i = \left(g_{i,j}\right)_{j=1}^m$ satisfies $g_{i,j} = 0$ for $i \leq j$.
Additionally, an artificial boundary condition is imposed to guarantee
well-posedness of the kernel equations as follows
\begin{equation}
	\label{eq:nmabc}
	\forall j < i: \quad \ell_{i,j}^p(1,\xi) = l_{i,j}(\xi),
\end{equation}
where the functions $l_{i,j}$ can be chosen arbitrarily. However, we
choose $l_{i,j}$ such that
\begin{equation}
	\label{eq:ltpm}
	l_{i,j}(1) = -\frac{\psi_{i,j}(1)}{\mu_i(1)- \mu_j(1)},
\end{equation}
in order for \eqref{eq:nmabc} to coincide with \eqref{eq:nmkbc2} at $x=1$ (see \cite[Rem. 
6]{HuLDiM16}). Finally, the segmented kernels are subject to continuity conditions 
\begin{subequations}
	\label{eq:nmkbcc}
	\begin{align}
		\forall i < p,\forall j\neq p: & &  \ell_{i,j}^{p-1}(x,\rho_i^p(x))
		& = \ell_{i,j}^p(x,\rho_i^p(x)), \\
		\forall i < p: & & \mathbf{k}_{i}^{p-1}(x,\rho_i^p(x))
		& = \mathbf{k}_{i}^p(x,\rho_i^p(x)),
	\end{align} 
\end{subequations}
for $1 \leq i \leq p \leq m$ and $j = 1,\ldots, m$. It follows by \cite[Thm 
A.1]{HuLVaz19} that the kernel equations \eqref{eq:nmk}--\eqref{eq:nmkbcc} have well-posed 
solutions, which are additionally continuous on every $\mathcal{T}_i^p$.


\end{document}